\newcommand{\Mod}[1]{\ (\mathrm{mod}\ #1)}
\newtheorem{theorem}{Theorem}
\newtheorem{observation}{Observation}
\newtheorem{proposition}{Proposition}
\newtheorem{remark}{Remark}
\newtheorem{corollary}{Corollary}
\title[Invariants of Self-Intersected N-Periodics]{Invariants of Self-Intersected\\N-Periodics in the Elliptic Billiard}
\author{Ronaldo Garcia}
\author{Dan Reznik}
\date{October 2020}
\begin{document}

\maketitle

\begin{abstract}
We study self-intersected N-periodics in the elliptic billiard, describing new facts about their geometry (e.g., self-intersected 4-periodics have vertices concyclic with the foci). We also check if some invariants listed in ``Eighty New Invariants of N-Periodics in the Elliptic Billiard'' (2020), \textit{arXiv:2004.12497} remain invariant in the self-intersected case. Toward that end, we derive explicit expressions for many low-N simple and self-intersected cases, including their caustics (for N=5 and 7 these depend on roots of 6th- and 12th-degree polynomials, respectively). We identify two special cases (one simple, one self-intersected) where a quantity prescribed to be invariant is actually variable.

\vskip .3cm
\noindent\textbf{Keywords} invariant, elliptic, billiard.
\vskip .3cm
\noindent \textbf{MSC} {51M04
\and 51N20 \and 51N35\and 68T20}
\end{abstract}

\section{Introduction}
\label{sec:intro}
A point mass bouncing elastically in the interior of an ellipse is know as the {\em elliptic billiard}; see Figure~\ref{fig:n4n5}. Two quantities are conserved: (i) energy and (ii) product of the angular momenta
about the foci \cite{berry81-conserved,dacosta2015-billiard-gravity}. The latter implies every trajectory segment is tangent to a virtual confocal ellipse, known as the ``caustic''; still equivalently, a quantity known as Joachimsthal's constant $J$ is conserved \cite{sergei91}; see Appendix~\ref{app:invariants}. 

The elliptic billiard is a special case of Poncelet's porism \cite{dragovic11}, i.e., if a trajectory closes with N segments (i.e., a proper  is chosen), any other point on the outer ellipse can initiate a new N-periodic trajectory.

\begin{figure}
    \centering
    \includegraphics[width=\textwidth]{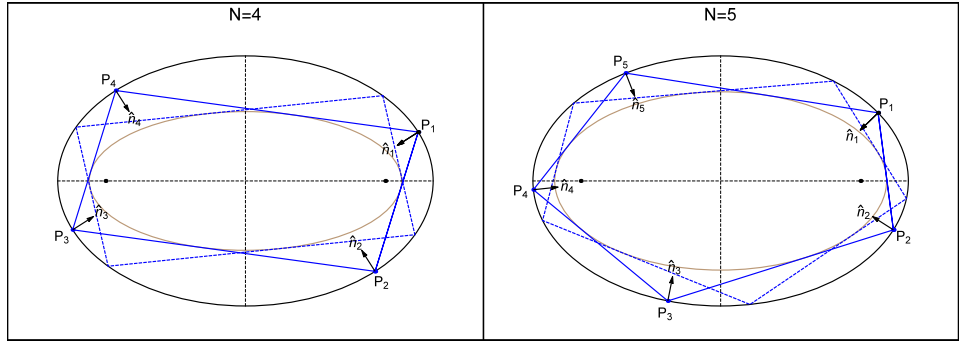}
    \caption{Elliptic Billiard (black) 4- and 5-periodics (blue). Every trajectory vertex $P_i$ (resp. segment $P_i P_{i+1}$) is bisected by the local normal $\hat{n}_i$ (resp. tangent to the confocal caustic, brown). A second, equi-perimeter member of each family is also shown (dashed blue). \href{https://youtu.be/Y3q35DObfZU}{Video}.}
    \label{fig:n4n5}
\end{figure}

Special to a confocal pair of ellipses is that the normal to a point on the outer ellipse bisects the two tangents to the inner one \cite{sergei91}. Since a Poncelet trajectory consists of a sequence of chords tangent to the caustic, if it closes after $N$ bounces, it must be the case that the normal at every vertex bisects the two tangents, i.e., any closed billiard trajectory will be N-periodic, i.e., a retracing sequence of elastic bounces.


The elliptic billiard is integrable. In fact it is conjectured as the only integrable planar billiard \cite{kaloshin2018}. A direct consequence is that the perimeter $L$ is conserved over said 1d family of N-periodics \cite{sergei91}. 


We have been probing the elliptic billiard for ``new'' cartesian properties, e.g., interesting manifestations of of constant $L$ and $J$ \cite{reznik2020-intelligencer,reznik2020-invariants}. A few examples include: (i) the sum of N-periodic angle cosines; (ii) the ratio of outer (aka. tangential) polygon (see Figure~\ref{fig:inner-outer}) to the N-periodic's for odd N. These were soon elegantly proved using tools of algebraic and differential geometry \cite{akopyan2020-invariants,bialy2020-invariants,caliz2020-area-product}. For the $N=3$ case some invariants have been explicitly derived \cite{garcia2020-new-properties}.

\begin{figure}
    \centering
    \includegraphics[width=.66\textwidth]{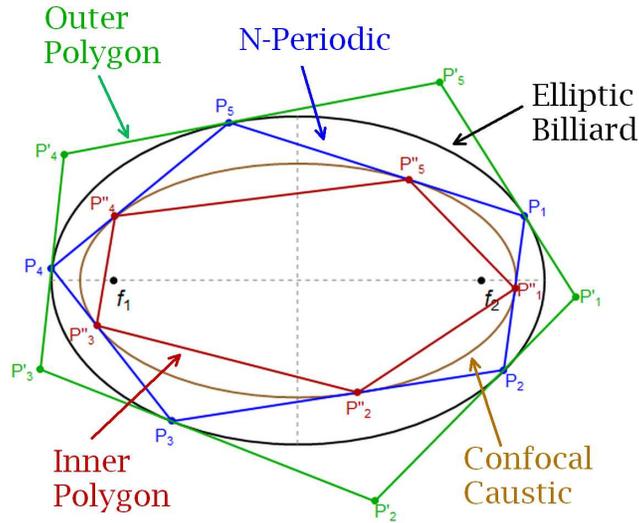}
    \caption{The N-periodic (blue), is associated with an outer (green) and an inner (red) polygons. The former's sides are tangent to the billiard (black) at each N-periodic vertex; the latter's vertices are the tangency points of N-periodics sides to the confocal caustic (brown). \href{https://youtu.be/PRkhrUNTXd8}{Video}}
    \label{fig:inner-outer}
\end{figure}

Experimentation with additional polygons such as pedals, antipedals, focus-inversives, etc., derived from N-periodics has increased the total number of invariants to 80+ \cite{reznik2020-invariants}. 

Addressed here for the first time is whether properties and invariants which hold for simple N-periodics also hold for their self-intersected versions; see Figures~\ref{fig:n7-three}. Birkhoff provides a method to compute the number of possible Poncelet N-periodics, simple or not \cite{birkhoff66}. For example, for N=5,6,7,8 there are 1,2,2,3 distinct self-intersected closed trajectories, respectively. 
\subsection*{Main contributions:}

\begin{itemize}
    \item Section~\ref{sec:n4-si}: The vertices of self-intersected 4-periodics and their outer polygon are concyclic with the foci on two distinct variable-radius circles.
    \item Section~\ref{sec:invariants}: for N=3,4,6,8, we derive expressions for selected invariants listed in \cite{reznik2020-invariants} including a few self-intersecting cases.
    \item In Appendix~\ref{app:5-periodic} (resp.  \ref{app:7-periodic}) we show that the caustic semi-axes for N=5 (N=7) periodics, simple or self-intersected, are roots of degree-6 (resp. degree-12) polynomials. There are two valid cases for $N=5$ and 3 for $N=7$; see Figure~\ref{fig:n7-three}.
    \item For at least two cases (Observations~\ref{obs:n4-804} and \ref{obs:n6ii-804}), an invariant that holds everywhere does not hold there, though the reasons are not yet clear.
\end{itemize}

\begin{figure}
    \centering
    \includegraphics[width=\textwidth]{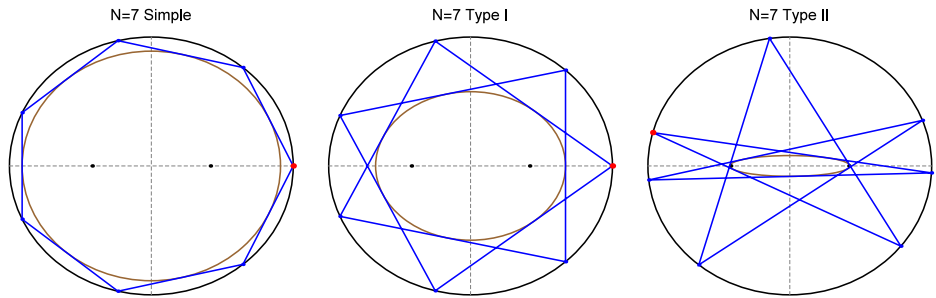}
    \caption{Three types of 7-periodics in an $a/b=1.1$ elliptic billiard, namely: (i) non-intersecting, (ii) self-intersected type I (turning number 2), (iii) self-intersected  type II (turning number 3).
    \href{https://youtu.be/yzBG8rgPUP4}{Video 1}, \href{https://youtu.be/BRQ39O9ogNE}{Video 2}}
    \label{fig:n7-three}
\end{figure}

Since many phenomena are best understood in motion, a link to a YouTube animation is provided in the caption of most figures. Table~\ref{tab:playlist} in Section~\ref{sec:videos} compiles all videos mentioned and a few extra ones. 

In Appendix~\ref{app:invariants} we review some classical identities of the elliptic billiard. In Appendix~\ref{app:vertices-caustics} we provide explicit expressions for vertices and caustics of $N=3,4,5,6,7,8$ families in both simple and self-intersected configurations. In Appendix~\ref{app:symbols} we list all symbols used.

\section{Preliminaries}

Throughout this article we assume the elliptic billiard is the ellipse:

\begin{equation}
\label{eqn:billiard-f}
f(x,y)=\left(\frac{x}{a}\right)^2+\left(\frac{y}{b}\right)^2=1,\;\;\;a>b>0.
\end{equation}

\subsection*{A word about our proof method}

We omit most proofs as they have been produced by a consistent process, namely: (i) using the expressions in Appendix~\ref{app:vertices-caustics}, find the vertices an axis-symmetric N-periodic, i.e., whose first vertex $P_1=(a,0)$; (ii) obtain a symbolic expression for the invariant of interest, (iii) simplify it (both human intervention and CAS), and finally (iv) verify its validity numerically over several N-periodic configurations and elliptic billiard aspect ratios $a/b$.


\section{Properties of Self-Intersected 4-Periodics}
\label{sec:n4-si}
The family of non-self-intersected (simple) billiard 4-periodics is comprised of parallelograms \cite{connes07}. In this section consider self-intersected 4-periodics whose caustic is a confocal hyperbola; see Figure~\ref{fig:bowtie-upright}. 
We start deriving simple facts about them and then proceed to certain elegant properties. 


\begin{proposition}
The perimeter $L$ of the self-intersected 4-periodic is given by:

\begin{equation}
L=\frac{4 a^2}{c},\;\;\;\mbox{with}\;\;\;c^2=a^2-b^2. 
\label{per-n4}
\end{equation}

\end{proposition}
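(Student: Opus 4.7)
The strategy is to exploit Poncelet's porism to reduce the statement to a single, maximally symmetric representative. Since the perimeter is conserved across the one-parameter family of self-intersected 4-periodics, it suffices to compute $L$ on a convenient member. I will use the ``rectangular bowtie'' inscribed in the billiard: its four vertices are the corners $(\pm x_0,\pm y_0)$ of an axis-aligned rectangle, traversed in the order $P_1=(x_0,y_0)$, $P_2=(-x_0,-y_0)$, $P_3=(-x_0,y_0)$, $P_4=(x_0,-y_0)$. The two diagonals $P_1 P_2$ and $P_3 P_4$ then meet at the origin (producing the self-intersection), while $P_2 P_3$ and $P_4 P_1$ are the vertical sides of the rectangle.

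The main computation is to impose the billiard reflection law at $P_2$. Using $\vec n = (-x_0/a^2,-y_0/b^2)$, proportional to $\nabla f(P_2)$, in the reflection formula $v'=v-2(v\cdot\vec n/|\vec n|^2)\,\vec n$ with incoming direction $v=(-x_0,-y_0)$, and demanding that $v'$ be vertical so as to reach $P_3$, a short calculation that uses $P_2$ on the ellipse reduces to the single equation
\begin{equation*}
\frac{x_0^2}{a^4}+\frac{y_0^2}{b^4}\;=\;\frac{2}{a^2}.
\end{equation*}
Solving this together with $x_0^2/a^2+y_0^2/b^2=1$ yields $x_0^2=a^2(a^2-2b^2)/c^2$ and $y_0^2=b^4/c^2$, and the bilateral symmetries of the configuration automatically take care of the reflection law at the remaining three vertices.

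The payoff is the identity $x_0^2+y_0^2=(a^2-b^2)^2/c^2=c^2$, so each of the two diagonals has length $2\sqrt{x_0^2+y_0^2}=2c$ while each of the two vertical sides has length $2y_0=2b^2/c$. Summing,
\begin{equation*}
L\;=\;2(2c)+2\!\left(\frac{2b^2}{c}\right)\;=\;\frac{4(c^2+b^2)}{c}\;=\;\frac{4a^2}{c},
\end{equation*}
and Poncelet's porism then propagates this value to every member of the family.

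The main obstacle is justifying that this bowtie really represents the self-intersected family whose caustic is the confocal hyperbola of the hypothesis: its two vertical sides are ordinary tangents to $x^2/x_0^2-y^2/y_0^2=1$ at the hyperbola's vertices $(\pm x_0,0)$, but its two diagonals lie along the asymptotes $y=\pm(y_0/x_0)\,x$ and are tangent to the hyperbola only at the points at infinity of the projective closure. Since $x_0^2+y_0^2=c^2$, this hyperbola is indeed confocal with the billiard, so the bowtie is a limiting member of the desired Poncelet family, and a short continuity argument transfers the perimeter value to the generic (non-degenerate) members. As a side benefit, the explicit expression for $x_0^2$ shows that the self-intersected 4-periodic family exists precisely when $a\geq b\sqrt{2}$, consistent with the usual Cayley obstruction.
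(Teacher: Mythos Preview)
Your proof is correct and is essentially the same as the paper's: both select the centrally symmetric ``bowtie'' with vertices $(\pm x_0,\pm y_0)$, impose the reflection law to obtain $x_0=a\sqrt{a^2-2b^2}/c$ and $y_0=b^2/c$, and sum the four side lengths using $x_0^2+y_0^2=c^2$. Your treatment is in fact more detailed than the paper's, which states the reflection condition (``reflects a vertical ray toward $-P_1$'') without carrying out the calculation.

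One remark on your ``main obstacle'': the continuity argument is unnecessary. Your reflection computation already shows that the billiard law holds at every vertex of the bowtie, so this configuration is a \emph{bona fide} member of the self-intersected family (indeed, in the paper's parametrization it sits at the interior parameter value $u=\sqrt{a^2-2b^2}/c$), not merely a limit of it. That the two diagonals meet the hyperbolic caustic only at infinity is a feature of this particular orbit, not a degeneracy that removes it from the family; perimeter invariance therefore applies directly.
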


\begin{proof}
Since perimeter is constant, use as the $N=4$ candidate the centrally-symmetric one, Figure~\ref{fig:bowtie-upright} (right). Its upper-right vertex $P_1=(x_1,y_1)$ is such that it reflects a vertical ray toward $-P_1$, and this yields:

\[
    P_1 = (x_1,y_1) = \left[\frac{a \sqrt{a^2-2 b^2}}{b c},\frac{b}{c}\right]
\]

\noindent Since $P_2=-P_1$ its perimeter is $L=2(|2P1|+2y_1)$ and this can be simplified to \eqref{per-n4},  invariant over the family.
\end{proof}

\noindent with $a/b{\geq}\sqrt{2}$. At $a/b=\sqrt{2}$ the family is a straight line from top to bottom vertex of the EB, Figure~\ref{fig:bowtie-upright} (left).

\begin{figure}
    \centering
    \includegraphics[width=\textwidth]{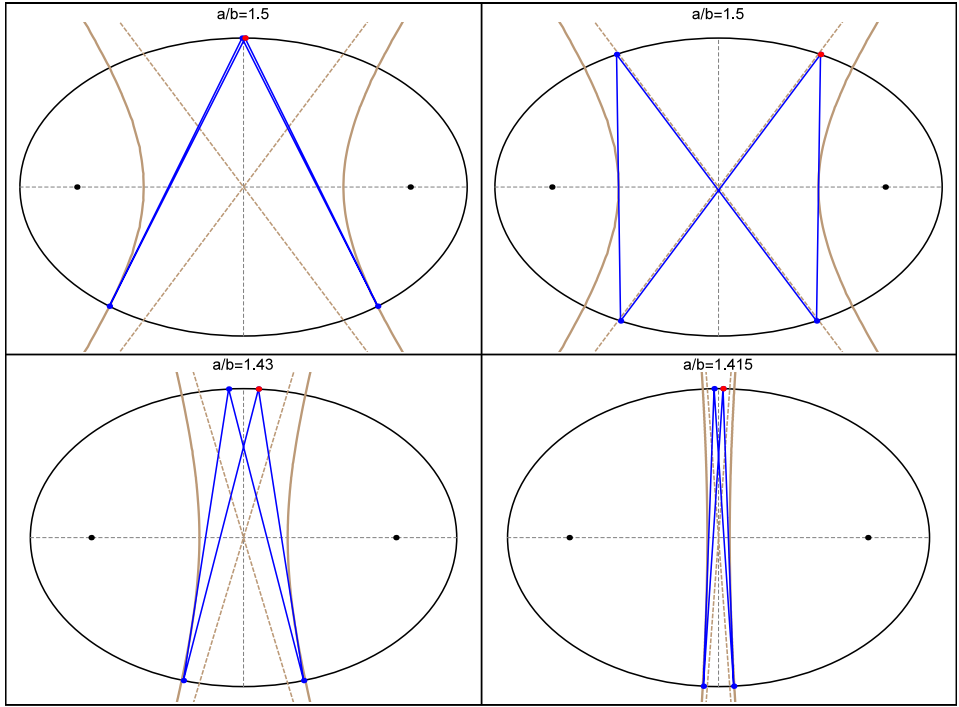}
    \caption{\textbf{Top Row:} two configurations of a self-intersected 4-periodic (blue) in an $a/b=1.5$ elliptic billiard (black). Its segments are tangent to a confocal hyperbolic caustic (brown). The left picture shows the trajectory near its ``doubled up'' configuration, whereas the right one shows an almost symmetric configuration where the diagonals coincide with the caustic's asymptotes (dashed brown). In the exactly symmetric configuration the side segments become vertical.  \textbf{Bottom row}: as the aspect ratio of the billiard decreases, the family is squeezed into an ever narrower space between the approaching branches of the caustic. At $a/b=\sqrt{2}$ they degenerate into a common vertical line at which point the family ceases to exist. \href{https://youtu.be/cCYxN7ueGV4}{Video}}
    \label{fig:bowtie-upright}
\end{figure}

\begin{observation}
At $a/b=\sqrt{1+\sqrt{2}}{\simeq}1.55377$ the two self-intersecting segments of the bowtie do so at right-angles.
\end{observation}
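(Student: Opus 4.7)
The plan is to specialize to the centrally symmetric bowtie constructed in the proof of Proposition~1, whose four vertices sit at $(\pm x_1,\pm y_1)$ with $x_1=a\sqrt{a^2-2b^2}/(bc)$ and $y_1=b/c$. In this configuration the two self-intersecting chords are the diagonals through the origin, carrying direction vectors $(x_1,y_1)$ and $(x_1,-y_1)$; the cosine of the angle between them is $(x_1^2-y_1^2)/(x_1^2+y_1^2)$, so perpendicularity is equivalent to $x_1=y_1$.

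Squaring and clearing denominators reduces this condition to the biquadratic $a^4-2a^2b^2-b^4=0$. Setting $u=a^2/b^2$ yields $u^2-2u-1=0$, whose positive root is $u=1+\sqrt{2}$, giving $a/b=\sqrt{1+\sqrt{2}}$.

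A geometric reading makes the result transparent and furnishes an independent consistency check: in the symmetric configuration the two crossing diagonals lie along the asymptotes of the confocal hyperbolic caustic (Figure~\ref{fig:bowtie-upright}, bottom right), and the vertical side-segments are tangent to the caustic at its vertex, forcing $a_c=x_1$. A short calculation using the ellipse equation shows that one always has $b_c=y_1$ in this configuration, so the crossing is perpendicular precisely when $a_c=b_c$, i.e.\ when the caustic is a \emph{rectangular hyperbola}; combined with the confocality relation $a_c^2+b_c^2=c^2$ this gives $a_c^2=c^2/2$, and substituting $a_c=b/c$ recovers the same biquadratic. The one caveat worth flagging is that the crossing angle is not an invariant of the one-parameter bowtie family; the observation is therefore most naturally read as concerning the centrally symmetric member of Proposition~1, which is the canonical representative displayed throughout Figure~\ref{fig:bowtie-upright}.
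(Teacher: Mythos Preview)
The paper states this as an \emph{Observation} and offers no proof, so your argument stands on its own. Your main derivation in the first two paragraphs is correct: in the centrally-symmetric bowtie the crossing chords are the two diagonals through the origin, perpendicularity is $x_1=y_1$, and the resulting biquadratic $a^4-2a^2b^2-b^4=0$ gives $a/b=\sqrt{1+\sqrt{2}}$. Your caveat is also well taken and worth keeping: the crossing angle genuinely varies over the one-parameter family (a quick check at a generic $u$ shows $(X+au)^2\neq(Y+h)^2$ in general), so the statement is about the centrally-symmetric representative, which is indeed the configuration the paper singles out in Proposition~1 and Figure~\ref{fig:bowtie-upright}.

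One slip to flag in your supplementary ``geometric reading'': the expression for $P_1$ quoted in Proposition~1 carries a typo --- the point $\bigl(a\sqrt{a^2-2b^2}/(bc),\,b/c\bigr)$ does not lie on the ellipse unless $b=1$, and plugging it into $L=2(|2P_1|+2y_1)$ gives $4a^2/(bc)$, not the stated $4a^2/c$. The correct coordinates (consistent with Appendix~\ref{app:4-periodic} and with the perimeter formula) are $x_1=a\sqrt{a^2-2b^2}/c$ and $y_1=b^2/c$, so that $a_c=x_1$ and $b_c=y_1=b^2/c$. Your main argument is unaffected because only the ratio $x_1/y_1$ enters, and that ratio is the same either way; but in your consistency check, ``substituting $a_c=b/c$'' should read $a_c=b^2/c$. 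With the typo'd value, $a_c^2=c^2/2$ becomes $2b^2=c^4$, which is dimensionally inhomogeneous and does \emph{not} recover the biquadratic; with the corrected $a_c=b^2/c$ one gets $2b^4=c^4$, i.e.\ $a^4-2a^2b^2-b^4=0$, as desired.
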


\begin{observation}
At $a/b{\simeq}1.55529$ the perimeter of the bowtie equal that of the EB.
\end{observation}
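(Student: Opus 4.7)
The plan is to compare the bowtie perimeter $L = 4a^2/c$ (from the preceding Proposition) with the full perimeter of the outer billiard, which by the classical arc-length formula for an ellipse is
\[
L_{EB} \;=\; 4a\,E(e), \qquad E(k) \;=\; \int_0^{\pi/2}\sqrt{1-k^2\sin^2\theta}\,d\theta,
\]
where $e = c/a$ is the eccentricity and $E$ is the complete elliptic integral of the second kind. Setting $L = L_{EB}$ and canceling $4a$ collapses the condition to the single transcendental equation
\[
e\,E(e) \;=\; 1.
\]

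Next I would justify that this equation has a unique admissible root inside the range $e \in (1/\sqrt{2},\,1)$ on which the bowtie family exists (the lower bound coming from the restriction $a/b \geq \sqrt{2}$ recorded immediately after the preceding Proposition). Using the standard identity $dE/dk = (E(k) - K(k))/k$, one computes $f'(e) = 2E(e) - K(e)$ for $f(e) := eE(e)$; this derivative is strictly positive up to the point $e^\star \approx 0.909$ where $K(e^\star) = 2E(e^\star)$, and strictly negative thereafter. Since $f(1/\sqrt{2}) = E(1/\sqrt{2})/\sqrt{2} < 1$ while $f(e^\star) > 1$, there is exactly one root $e_0 \in (1/\sqrt{2},\,e^\star)$; on $(e^\star,1)$ the function $f$ descends from its maximum monotonically toward $f(1) = 1$ from above, contributing only the degenerate endpoint. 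Bisection (or Newton's method) applied on, say, $[0.70,\,0.80]$ converges rapidly to $e_0 \doteq 0.765891$, whence
\[
a/b \;=\; \bigl(1 - e_0^2\bigr)^{-1/2} \;\doteq\; 1.55529,
\]
matching the stated value.

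The substantive step is the reduction to $eE(e) = 1$; once this is in hand, the statement is essentially a numerical fact, and no closed form for $e_0$ should be expected. The only mild obstacle is notational rather than mathematical, namely keeping the elliptic-integral conventions (modulus $k$ versus parameter $m = k^2$) consistent between the arc-length formula and whatever numerical tool is used to locate $e_0$.
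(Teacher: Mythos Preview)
Your argument is correct. The paper records this fact as an \emph{Observation} with no accompanying proof; it is simply a numerically-verified statement in line with the paper's general methodology (compute an invariant symbolically or numerically, then check it over many configurations). Your reduction of the equality $4a^2/c = 4aE(e)$ to the single transcendental equation $eE(e)=1$ is exactly the right simplification, and your monotonicity analysis via $f'(e)=2E(e)-K(e)$ goes beyond what the paper offers by establishing that the admissible root is unique in $(1/\sqrt{2},1)$. One cosmetic point: your bisection bracket $[0.70,0.80]$ dips just below the lower admissibility threshold $e=1/\sqrt{2}\approx 0.7071$, but since the root $e_0\approx 0.7659$ lies safely inside the admissible range this does no harm to the conclusion.
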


Referring to Figure~\ref{fig:n4-inv}:

\begin{proposition}
The $N=4$ self-intersected family has zero signed orbit area and zero sum of signed cosines, i.e., both are invariant. The same two facts are true for its outer polygon.
\end{proposition}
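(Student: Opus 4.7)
The plan is to follow the paper's standard methodology for verifying such invariants: compute the quantity symbolically at a convenient representative of the family, check that it evaluates to $0$, and then corroborate invariance numerically (or by a fully parametric symbolic computation) across other configurations. A useful warm-up is the simple $N=4$ case: every parallelogram has interior angles $\alpha,\pi-\alpha,\alpha,\pi-\alpha$, so the sum of cosines is already zero via the pairing $\cos\alpha+\cos(\pi-\alpha)=0$, and the bowtie statement is the natural self-intersected analogue.

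For the signed orbit area I would use the centrally symmetric representative of Proposition~1, with vertices $P_1=(x_1,y_1)$, $P_2=-P_1$, $P_3=(-x_1,y_1)$, $P_4=-P_3$. The shoelace expression
\[
2A=\sum_{i=1}^{4}\bigl(x_i y_{i+1}-x_{i+1} y_i\bigr)
\]
collapses termwise to $0$ via the antipodal pairings. For the signed sum of cosines, central symmetry forces the four unsigned interior cosines to equal $y_1/|P_1|$; assigning the sign of each via the local turning direction (i.e., the sign of the cross product of incoming and outgoing edge vectors, matching the convention of \cite{reznik2020-invariants}) yields two $+y_1/|P_1|$ and two $-y_1/|P_1|$ contributions, summing again to $0$.

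The outer polygon computation calls for a different representative, because at the centrally symmetric configuration the billiard tangents at $P_i$ and $-P_i$ are parallel, so two vertices of the outer quadrilateral escape to infinity. I would therefore parameterize a generic bowtie using the explicit vertex formulas of Appendix~\ref{app:vertices-caustics}, compute the four outer vertices by successive tangent-line intersections ($2{\times}2$ linear solves), and simplify the outer shoelace sum and the outer signed-cosine sum by CAS, each to $0$. Equivalently, one could recover the symmetric case as a limit of generic nearby bowties by continuity. The main obstacles are (i) the parallel-tangent degeneracy, which bars the most convenient representative for the outer-polygon work and forces the calculation into the more intricate generic setting; and (ii) pinning down the sign convention for cosines of self-intersected polygons so that the ``signed'' statement has content matching the conventions of \cite{reznik2020-invariants}.
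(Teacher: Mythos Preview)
Your plan would work, but it is substantially more laborious than the paper's argument and misses the key structural observation. The paper's proof is a single sentence: \emph{every} self-intersected $4$-periodic (not just the centrally symmetric one) is symmetric with respect to the billiard's minor axis. Indeed, from the explicit parametrization in Appendix~\ref{app:4-periodic}, the reflection $(x,y)\mapsto(-x,y)$ sends the ordered quadrilateral $P_1P_2P_3P_4$ to $P_3P_4P_1P_2$, a cyclic relabeling of the \emph{same} oriented polygon. Since reflection reverses orientation, the signed area satisfies $A=-A$, hence $A=0$; and since reflection swaps the turning sense at $P_1\leftrightarrow P_3$ and $P_2\leftrightarrow P_4$ while preserving unsigned angles, the signed cosines cancel in pairs. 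The outer polygon inherits the same minor-axis symmetry (tangent lines to the ellipse are mapped to tangent lines under an isometry of the ellipse), so the identical argument applies to it with no computation whatsoever.

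By contrast, you invoke \emph{central} symmetry, which holds only at one special configuration, and then must either appeal to numerical corroboration or carry out a full CAS computation at the generic parameter --- and for the outer polygon you correctly note that the centrally symmetric configuration is degenerate, forcing you into the harder generic case. All of that machinery evaporates once you use the minor-axis symmetry, which is present throughout the family. Your approach is not wrong, but it trades a one-line isometry argument for a CAS verification and an avoidable degeneracy workaround.
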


\begin{proof}
This stems from the fact all self-intersected 4-periodics are symmetric with respect to the EB's minor axis.
\end{proof}

\noindent Referring to Figure~\ref{fig:n4-inv}, as in Appendix~\ref{app:4-periodic}, let vertex $P_1$ of the self-intersected 4-periodic be parametrized as $P_1(u)=\left[a u,b\sqrt{1-u^2}\right]$, with $|u|{\leq}\frac{a}{c^2}
\sqrt {{a}^{2}-2\,{b}^{2}}$. Then:

\begin{figure}
    \centering
    \includegraphics[width=.8\textwidth]{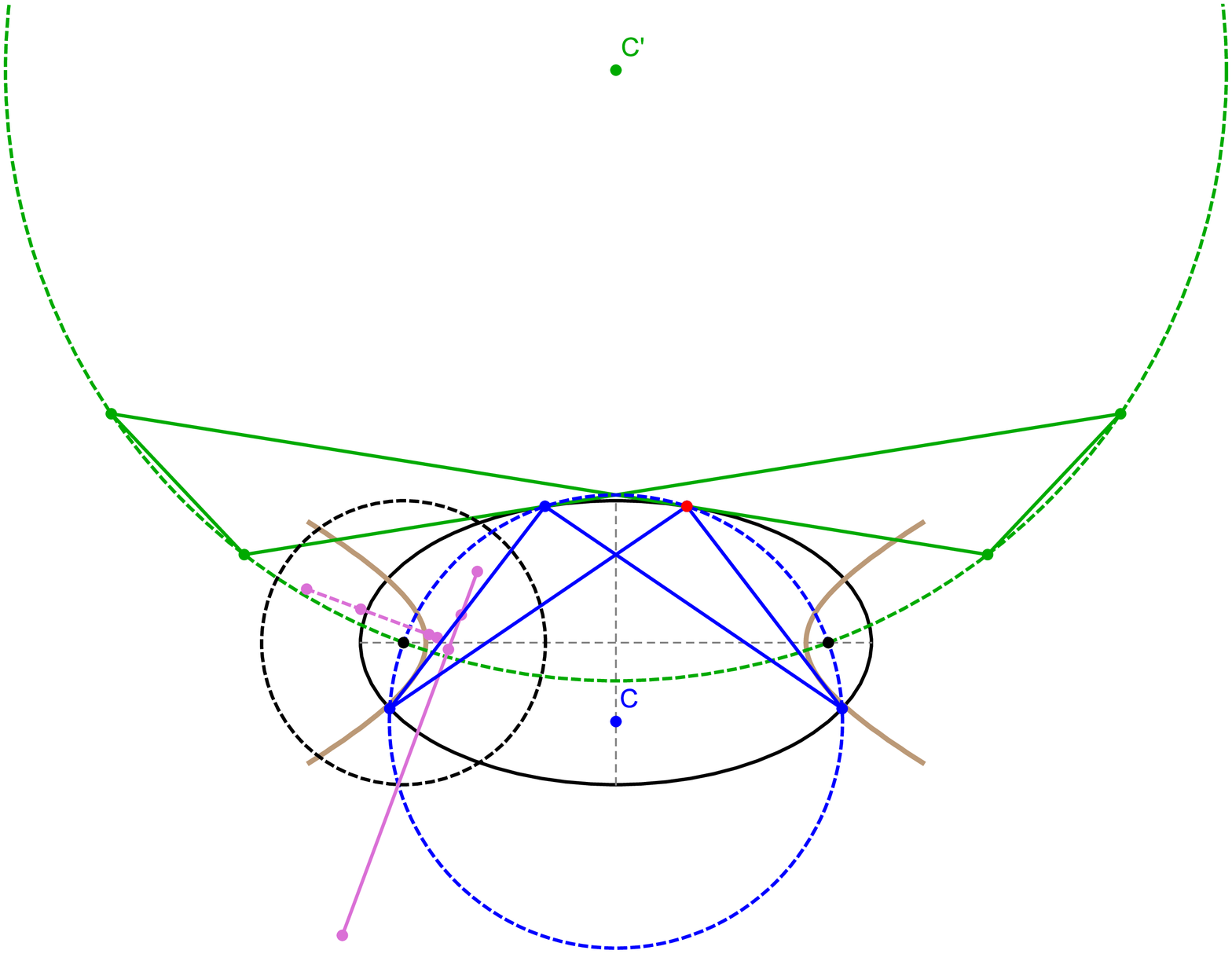}
    \caption{The vertices of the self-intersected 4-periodic (blue) are concyclic with the billiard foci on a circle (dashed blue) centered on $C$. The inversive polygon (pink segment) with respect to a unit circle $C^\dagger$ (dashed black) centered on the left focus degenerates to a segment along the radical axis of the two circles. The vertices of the outer polygon (green) are also concyclic with the foci on a distinct circle (dashed green) centered on $C'$. Therefore the outer's inversive polygon (dotted pink) is also a segment along the radical axis of this circle with $C^\dagger$. Note the two radical axes are dynamically perpendicular.  \href{https://youtu.be/207Ta31Pl9I}{Video 1}, \href{https://youtu.be/4g-JBshX10U}{Video 2}}
    \label{fig:n4-inv}
\end{figure}

\begin{theorem}
The four vertices of the self-intersected 4-periodic (resp. outer polygon) are concyclic with the two foci of the elliptic billiard, on a circle $\mathcal{C}$ of variable radius $R$ (resp. $R'$) whose center $C$ (resp. $C'$) lies on the y axis. These are given by:
\begin{align*}
    C=&\left[0,  \frac{c^{2}u^{2}-a^{2}+2\,b^{2}}{2b\sqrt {1-u^{2} }}\right]  \\
    R=&\frac{a^2-c^2u^2} {2b\sqrt{1-u^2}} \\
    C'=&\left[0,  - \,{\frac {2 b c^2\sqrt {1-u^{2}} }{a^{2}+  ( {u}^{2}-2
  ) c^{2}}} \right]
   \\
    R'=&\frac{c(c^2u^2-a^2)}{a^2+(u^2-2)c^2}
\end{align*}
\end{theorem}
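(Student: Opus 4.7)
The plan is first to invoke the symmetry of the self-intersected 4-periodic about the minor ($y$) axis, established in the preceding proposition, which forces the four vertices to form two pairs $(x_1,y_1),(-x_1,y_1)$ and $(x_2,y_2),(-x_2,y_2)$. Any circle through the two foci $(\pm c,0)$ has its center on their perpendicular bisector, i.e., the $y$ axis, so I may write $C=(0,y_C)$. Concyclicity of the foci with the vertex $(x_i,y_i)$ gives
\[
    y_C=\frac{x_i^{2}+y_i^{2}-c^{2}}{2\,y_i},
\]
and concyclicity of all four vertices with the foci reduces to verifying that this expression yields the same value for $i=1$ and $i=2$.

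Next I would use the explicit parametrization $P_1(u)=\left[au,b\sqrt{1-u^2}\right]$ together with the formulas from Appendix~\ref{app:4-periodic} (or equivalently, reflection through the normal at $P_1$ combined with the confocal-hyperbola tangency condition) to obtain $(x_2,y_2)$ as an algebraic function of $u$. Substituting into the two candidate expressions for $y_C$ and clearing denominators, the identity should reduce to a polynomial identity in $u$ with $a,b,c$ as parameters, verifiable by direct symbolic simplification along the lines of the paper's stated CAS methodology. Once the common value of $y_C$ is obtained, it should match $\bigl(c^{2}u^{2}-a^{2}+2b^{2}\bigr)/\bigl(2b\sqrt{1-u^{2}}\bigr)$, and the radius $R$ follows from $R^{2}=c^{2}+y_C^{2}$, which after simplification should collapse to the claimed $R=(a^{2}-c^{2}u^{2})/(2b\sqrt{1-u^{2}})$.

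For the outer polygon I would apply the same strategy: compute its vertices as intersections of consecutive tangents $\tfrac{x x_i}{a^2}+\tfrac{y y_i}{b^2}=1$ to the billiard at $P_i$. The $y$-axis symmetry of the billiard vertices carries over, so the outer polygon's four vertices again form two pairs symmetric about the $y$ axis; hence the circumscribing circle (if it exists) must be centered on the $y$ axis. Equating the values of $y_{C'}$ determined by the two distinct outer vertices, as above, yields an identity which, combined with the already-established identities for the inner family, should produce the claimed $C'$ and $R'$.

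The principal obstacle is not conceptual but algebraic: the explicit expression for $P_2$ (and therefore for the outer vertices) in terms of $u$ is bulky, involving nested radicals coming from the caustic parameters. Showing that the two expressions for $y_C$ (and likewise $y_{C'}$) agree, and that they simplify to the stated closed forms, requires careful symbolic manipulation of precisely the kind the authors delegate to computer algebra; I would expect no single step to be deep, but the whole computation to be unpleasant by hand.
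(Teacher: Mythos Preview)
Your proposal is correct and aligns with the paper's own approach: the paper does not write out a proof for this theorem but relies on its stated general methodology of substituting the explicit vertex formulas from Appendix~\ref{app:4-periodic} and simplifying with a CAS, which is exactly what you outline. One minor remark: the appendix already supplies $P_2$ and $P_4$ in closed form (with a single square root, not nested radicals), so the algebraic burden you anticipate is lighter than you suggest; in particular, computing $y_C=(x_i^2+y_i^2-c^2)/(2y_i)$ for $P_1$ and for $P_2$ and checking they agree is a short hand computation once those formulas are in hand.
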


\begin{corollary}
The half harmonic mean of $R^2$ and $R'^2$ is invariant and equal to $c^2=a^2-b^2$, i.e., $1/R^2+1/R'^2=1/c^{2}$.
\end{corollary}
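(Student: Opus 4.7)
The plan is to substitute the closed-form expressions for $R$ and $R'$ supplied by the preceding theorem directly into $1/R^2+1/R'^2$ and verify the claimed identity by elementary algebra. Concretely, I would first square and invert the two formulas to obtain
\[
\frac{1}{R^2}=\frac{4b^{2}(1-u^{2})}{(a^{2}-c^{2}u^{2})^{2}},\qquad
\frac{1}{R'^{2}}=\frac{\bigl(a^{2}+(u^{2}-2)c^{2}\bigr)^{2}}{c^{2}(a^{2}-c^{2}u^{2})^{2}},
\]
using the fact that $(c^{2}u^{2}-a^{2})^{2}=(a^{2}-c^{2}u^{2})^{2}$ to harmonize the denominators.

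Next I would put the two fractions over the common denominator $c^{2}(a^{2}-c^{2}u^{2})^{2}$. The claim $1/R^{2}+1/R'^{2}=1/c^{2}$ then reduces to the polynomial identity
\[
4b^{2}c^{2}(1-u^{2})+\bigl(a^{2}+(u^{2}-2)c^{2}\bigr)^{2}=(a^{2}-c^{2}u^{2})^{2}.
\]
Expanding the two squares, the $a^{4}$, $c^{4}u^{4}$, and mixed $a^{2}c^{2}u^{2}$ terms cancel in the expected way, and the remaining terms collect as $4c^{2}(1-u^{2})(a^{2}-c^{2})$. Using the defining relation $c^{2}=a^{2}-b^{2}$, i.e.\ $a^{2}-c^{2}=b^{2}$, this equals $4b^{2}c^{2}(1-u^{2})$, which is precisely the left-hand contribution needed. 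Hence the identity holds for every admissible $u$, so the quantity $1/R^{2}+1/R'^{2}$ is constant and equal to $1/c^{2}$, proving the corollary.

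There is no real obstacle here beyond careful bookkeeping: the theorem already packages all the geometric content, and the corollary is a one-line algebraic consequence. The only mild subtlety is the sign handling when squaring $R'$ (its formula can be negative depending on $u$), but since we only need $R'^{2}$ this is absorbed automatically. A CAS check over a few values of $u$ and $a/b$, in the style of the authors' proof methodology, would corroborate the symbolic simplification.
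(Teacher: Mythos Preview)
Your proposal is correct and is exactly the approach implied by the paper: the corollary is stated without an explicit proof, as an immediate algebraic consequence of the closed-form expressions for $R$ and $R'$ in the preceding theorem, and your direct substitution and simplification (reducing to $4c^{2}(1-u^{2})(a^{2}-c^{2})=4b^{2}c^{2}(1-u^{2})$ via $a^{2}-c^{2}=b^{2}$) is precisely that verification carried out in full.
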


Note: the above Pythagorean relation implies that the polygon whose vertices are a focus, and the inversion of $C,C',O$ (center of billiard) with respect to a unit circle centered on said focus, is a rectangle of sides $1/R$ and $1/R'$ and diagonal $1/c$.

Remarkably:

\begin{corollary}
Over the self-intersected $N=4$ family, the power of the origin with respect to both $\mathcal{C}$ and $\mathcal{C}'$ is invariant and equal to $b^2-a^2$. 
\end{corollary}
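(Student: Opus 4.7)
The plan is to exploit the preceding theorem rather than recompute from scratch. That theorem establishes that both circles $\mathcal{C}$ and $\mathcal{C}'$ pass through the two foci $F_1=(-c,0)$ and $F_2=(c,0)$. Since the origin $O$ lies on the $x$-axis between $F_1$ and $F_2$, the line through $O$ along the $x$-axis meets each circle in exactly the two foci. The classical power-of-a-point formula then gives, with directed lengths,
\[
\mathrm{pow}_{\mathcal{C}}(O)=\overline{OF_1}\cdot\overline{OF_2}=(-c)(+c)=-c^2=b^2-a^2,
\]
and the identical argument applied to $\mathcal{C}'$ gives the same value. Because the computation depends only on the two foci (which are fixed), and not on $C,C',R,R'$, invariance over the $u$-parameter is immediate.

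If one prefers a direct verification from the formulas of the previous theorem, the calculation is short. For $\mathcal{C}$, the power of $O$ is
\[
|C|^2-R^2=\frac{(c^2u^2-a^2+2b^2)^2-(a^2-c^2u^2)^2}{4b^2(1-u^2)}.
\]
The numerator is a difference of squares that factors as $[2b^2]\cdot[2c^2u^2-2a^2+2b^2]=-4b^2c^2(1-u^2)$, cancelling the denominator and leaving $-c^2$. For $\mathcal{C}'$ the key observation is the identity $a^2+(u^2-2)c^2=c^2u^2-a^2+2b^2$; using it, the numerator $4b^2c^4(1-u^2)-c^2(c^2u^2-a^2)^2$ simplifies to $-c^2$ times the square of that same denominator, again producing $-c^2$.

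The main obstacle is essentially conceptual rather than technical: noticing that the preceding theorem already does all the work, because two distinct points on a fixed line (the foci) determine the power of any other point on that line (the origin) with respect to every circle through both of them. Once that observation is in place, no calculation is required; the algebraic route above serves only as a sanity check. This also explains conceptually why the two powers coincide: they are forced to agree by the incidence with $F_1$ and $F_2$ alone.
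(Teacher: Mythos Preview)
Your proof is correct and is exactly the argument the paper has in mind: the statement is labeled a corollary of the preceding theorem precisely because once both $\mathcal{C}$ and $\mathcal{C}'$ are known to pass through $F_1=(-c,0)$ and $F_2=(c,0)$, the power of the origin is forced to be $(-c)(c)=-c^2=b^2-a^2$ via the secant along the $x$-axis. Your optional algebraic check from the explicit formulas for $C,R,C',R'$ is also correct.
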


Referring to Figure~\ref{fig:4si-midpoints}: 

\begin{observation}
The midpoints of each of the four segments of self-intersected 4-periodics are collinear on a horizontal line. Their locus is an $\infty$-shaped quartic curve given by:

\[c^2(b^2x^2+a^2y^2)^2 - b^4a^2\left((a^2-2b^2)x^2 - a^2 y^2\right)=0.\]
\end{observation}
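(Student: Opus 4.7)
The plan is to first use the minor-axis symmetry of self-intersected 4-periodics (invoked in the preceding proposition about zero signed area) to obtain the collinearity instantly, and then to identify the stated locus as the classical midpoint curve for chords of the outer ellipse tangent to the caustic hyperbola.

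Writing the vertices consistently with $y$-axis symmetry, $P_1=(x_1,y_1)$, $P_2=(x_2,y_2)$, $P_3=(-x_1,y_1)$, $P_4=(-x_2,y_2)$, the trajectory $P_1\to P_2\to P_3\to P_4\to P_1$ forms a bowtie whose crossing chords are $P_1P_2$ and $P_3P_4$. A direct computation yields the four midpoints as
\[
\Bigl(\pm\tfrac{x_1+x_2}{2},\,\tfrac{y_1+y_2}{2}\Bigr),\qquad \Bigl(\pm\tfrac{x_1-x_2}{2},\,\tfrac{y_1+y_2}{2}\Bigr),
\]
all sharing the ordinate $(y_1+y_2)/2$, which establishes collinearity on a common horizontal line.

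For the locus, since every side of the 4-periodic is tangent to the confocal hyperbolic caustic, and (by Poncelet) every chord of the ellipse tangent to that caustic arises as a side of some member of the family, the locus coincides with the midpoint curve of tangent chords of the outer ellipse to the caustic. I would use the standard fact that the chord of $x^2/a^2+y^2/b^2=1$ with midpoint $(X,Y)$ satisfies $b^2 X\,x + a^2 Y\,y = b^2X^2+a^2Y^2$, and impose tangency to $x^2/\alpha^2 - y^2/\gamma^2 = 1$ using the dual condition $A^2\alpha^2 - B^2\gamma^2 = C^2$ for a line $Ax+By=C$. This produces the quartic
\[
(b^2 X^2 + a^2 Y^2)^2 \;=\; b^4\alpha^2 X^2 - a^4 \gamma^2 Y^2.
\]

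It remains to substitute the caustic semi-axes for the self-intersected 4-periodic. From Appendix~\ref{app:4-periodic} one reads off $\alpha^2 = a^2(a^2-2b^2)/c^2$ and $\gamma^2 = b^4/c^2$; these values can also be pinned down intrinsically by requiring that the asymptote slope $\gamma/\alpha$ equal the slope $y_1/x_1$ of the crossing chord in the centrally-symmetric configuration and combining with the confocal constraint $\alpha^2+\gamma^2=c^2$. Plugging in and clearing a factor of $c^2$ reproduces precisely the stated equation. The main obstacle is identifying the caustic parameters correctly; once in hand, the rest is routine bookkeeping. A fallback, more computational route would substitute the explicit parametrization of $P_2(u)$ from Appendix~\ref{app:4-periodic} into the four midpoints, express them as rational functions of $u$, and eliminate $u$ by a resultant — arriving at the same quartic at the cost of bulkier algebra.
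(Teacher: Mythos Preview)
Your argument is correct. The paper does not supply a proof for this Observation; it is simply asserted, consistent with the blanket methodology announced in the Preliminaries (parametrize an axis-symmetric configuration via the explicit vertices of Appendix~\ref{app:4-periodic}, compute the midpoints symbolically, and eliminate the parameter with CAS assistance). Your ``fallback'' route is exactly that method.

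Your primary route is genuinely different and cleaner: rather than parametrize and eliminate, you recognize the locus as the midpoint curve of chords of the ellipse tangent to the caustic hyperbola, obtain it in one line from the chord-midpoint equation $b^2Xx+a^2Yy=b^2X^2+a^2Y^2$ and the dual tangency condition $A^2\alpha^2-B^2\gamma^2=C^2$, and then specialize $\alpha^2=a^2(a^2-2b^2)/c^2$, $\gamma^2=b^4/c^2$. This buys a conceptual explanation of why the curve is quartic and why it is symmetric, and it would apply verbatim to any confocal hyperbolic caustic. The only place to tighten slightly is the Poncelet step for a hyperbolic caustic: not every point of the ellipse launches a tangent chord (the parametrization in Appendix~\ref{app:4-periodic} restricts $|u|$), but every chord of the ellipse tangent to the caustic does extend, by the reflection law plus the closure theorem, to a full 4-periodic, so the set of side-midpoints is exactly the real part of your quartic between the caustic vertices. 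The collinearity part via $y$-axis symmetry is immediate and matches the spirit of the paper's earlier use of that symmetry.
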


\noindent Furthermore, the above quartic is tangent to the confocal hyperbolic caustic at its vertices $[\pm a\sqrt{a^2-2 b^2}/c,0]$.

\begin{figure}
    \centering
    \includegraphics[width=.5\textwidth]{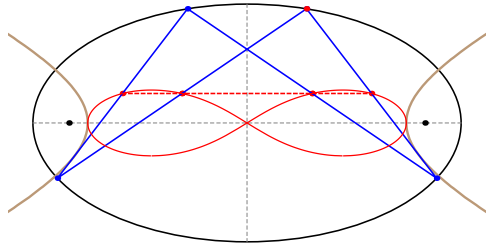}
    \caption{The midpoints of each of the four segments of self-intersected 4-periodics are collinear on a horizontal line. Their locus is an $\infty$-shaped quartic which touches the caustic at its vertices. \href{https://youtu.be/GZCrek7RTpQ}{Video}}
    \label{fig:4si-midpoints}
\end{figure}

Let $\mathcal{P}^\dagger$ (resp. $\mathcal{Q}^\dagger$) denote the inversive polygon of 4-periodics (resp. its outer polygon) wrt a unit circle $\mathcal{C}^\dagger$ centered on one focus. From properties of inversion:

\begin{corollary}
$\mathcal{P}^\dagger$  (resp. $\mathcal{Q}^\dagger$) has four collinear vertices, i.e., it degenerates to a segment along the radical axis of $\mathcal{C}^\dagger$ and $\mathcal{C}$ (resp. $\mathcal{C}'$).
\end{corollary}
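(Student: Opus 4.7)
The plan is to deduce the corollary almost immediately from the preceding theorem together with a single classical property of inversion, so the proof should be quite short. The theorem has already established that the four vertices of the self-intersected 4-periodic, together with the two foci, all lie on the circle $\mathcal{C}$. Since $\mathcal{C}^\dagger$ is the unit circle centered on one of the foci $F$, the inversion center $F$ is itself a point of $\mathcal{C}$. By the standard fact that inversion sends any circle through the center of inversion to a straight line, the image of $\mathcal{C}$ is some line $\ell$, and hence the four inverted vertices, which are by definition the vertices of $\mathcal{P}^\dagger$, must lie on $\ell$.

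The second step is to identify $\ell$ with the radical axis of $\mathcal{C}^\dagger$ and $\mathcal{C}$. The radical axis is perpendicular to the line of centers $FC$ and passes through the (possibly complex) intersection points of the two circles. The line $\ell$ is also perpendicular to $FC$, by the reflective symmetry of inversion across any line through $F$. Moreover, every real intersection point of $\mathcal{C}$ and $\mathcal{C}^\dagger$ sits at unit distance from $F$ and so is fixed by the inversion, hence lies on $\ell$. Two lines with the same direction and a common point coincide, so $\ell$ equals the radical axis. If one wishes to avoid intersection-point arguments in the disjoint case, a one-line coordinate check suffices: placing $F$ at the origin and the center of $\mathcal{C}$ at $(d,0)$, the circle has equation $x^2+y^2=2dx$, and inverting gives the line $x=1/(2d)$, which matches the radical-axis formula.

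The statement about the outer polygon is obtained verbatim: the same theorem supplies a circle $\mathcal{C}'$ through both foci and the four vertices of the outer polygon, so replacing $\mathcal{C}$ by $\mathcal{C}'$ throughout places the four vertices of $\mathcal{Q}^\dagger$ on the radical axis of $\mathcal{C}^\dagger$ and $\mathcal{C}'$. I do not anticipate any substantive obstacle here; the only point that deserves a little care is confirming that the image line is precisely the radical axis rather than some other line perpendicular to $FC$, which is handled by either of the two arguments above.
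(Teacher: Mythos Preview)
Your argument is correct and matches the paper's approach: the paper states the corollary immediately after the phrase ``From properties of inversion'' and offers no further proof, so it is invoking exactly the classical fact you spell out---that inversion in a circle centered at a point of $\mathcal{C}$ sends $\mathcal{C}$ to a line, and that this line is the radical axis of $\mathcal{C}$ and the inversion circle. Your coordinate verification of the radical-axis identification is a nice touch that the paper omits, and it cleanly covers the case where $\mathcal{C}$ and $\mathcal{C}^\dagger$ do not meet in real points.
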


\begin{proposition}
The two said radical axes are perpendicular.
\end{proposition}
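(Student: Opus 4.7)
The plan is to convert ``the two radical axes are perpendicular'' into ``$FC\perp FC'$'' (where $F$ is the focus serving as the center of $\mathcal{C}^\dagger$), and then to extract that orthogonality from the reciprocal--Pythagorean relation already furnished by the previous corollary.

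First, I would invoke the standard fact that the radical axis of two circles is perpendicular to the line joining their centers. Applied to the pairs $(\mathcal{C}^\dagger,\mathcal{C})$ and $(\mathcal{C}^\dagger,\mathcal{C}')$, this immediately reduces the statement to showing $\angle CFC'=90^\circ$; the inversive/radical-axis machinery is no longer needed after this step.

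Next I would read off the Euclidean configuration from the preceding theorem: both $C$ and $C'$ lie on the $y$-axis, while $F=(\pm c,0)$ and the billiard center $O$ is the origin. Thus $FO\perp CC'$ and $O$ is the foot of the perpendicular from $F$ onto line $CC'$. A quick sign inspection of the displayed formulas, using the identity $a^{2}+(u^{2}-2)c^{2}=c^{2}u^{2}-a^{2}+2b^{2}$ so that the same factor sits in the numerator of $y_C$ and in the denominator of $y_{C'}$, shows that $y_C$ and $y_{C'}$ always have opposite signs, i.e.\ $O$ lies strictly between $C$ and $C'$ on the $y$-axis.

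Finally, because $\mathcal{C}$ and $\mathcal{C}'$ pass through $F$, one has $|FC|=R$, $|FC'|=R'$ and $|FO|=c$, so the previous corollary rewrites as $1/|FC|^2+1/|FC'|^2=1/|FO|^2$. This is the classical reciprocal--Pythagorean identity satisfied by the legs $FC,FC'$ and the altitude $FO$ of a right triangle with hypotenuse $CC'$. Combined with the already-known facts $FO\perp CC'$ and $O\in\overline{CC'}$, it forces $\angle CFC'=90^\circ$, which is exactly what we needed. The main (and only) subtle point is the converse direction of the reciprocal--Pythagorean identity: a coordinate check shows that the identity alone yields $|y_C\,y_{C'}|=c^{2}$, with the non-perpendicular branch $y_C y_{C'}=+c^{2}$ occurring when $C,C'$ lie on the same side of $O$. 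The sign analysis in the previous paragraph is therefore indispensable for ruling out that branch.
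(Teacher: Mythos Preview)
Your proof is correct, and its first move---reducing ``the two radical axes are perpendicular'' to the orthogonality $(C-F)\perp(C'-F)$---is exactly what the paper does. The paper then simply asserts that this dot product vanishes, leaving the verification from the explicit formulas for $C,C'$ to the reader.

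Your route differs only in the packaging of that verification. You pass through the previous corollary $1/R^{2}+1/R'^{2}=1/c^{2}$ and interpret it as the reciprocal--Pythagorean relation for the altitude $FO$ of a would-be right triangle $CFC'$, then use a sign analysis to pick the correct branch. This is elegant, but notice that your sign analysis already does all the work by itself: once you observe $a^{2}+(u^{2}-2)c^{2}=c^{2}u^{2}-a^{2}+2b^{2}$, the common factor cancels and you get $y_{C}\,y_{C'}=-c^{2}$ \emph{exactly}, whence $(C-F)\cdot(C'-F)=c^{2}+y_{C}y_{C'}=0$ directly. So the detour through the corollary and the converse Pythagorean lemma is decorative---your argument stands (and is shorter) without it. One small point neither you nor the paper fully treats: at the isolated parameter value where that common factor vanishes, $C'$ goes to infinity and $\mathcal{C}'$ degenerates to a line; the paper acknowledges this configuration in a side remark, and you may wish to as well.
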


\begin{proof}
 It is enough to check that the vectors $C-[-c,0]$ and $C'-[-c,0]$ are orthogonal. Observe that when $u^2=(a^2-2b^2)/c=(2c^2-a^2)/c$ the outer polygon is contained in the horizontal axis.
\end{proof}

\begin{observation}
The pairs of opposite sides of the outer polygon to self-intersected 4-periodics intersect at the top and bottom of the circle $(C,R)$ on which the 4-periodic vertices are concyclic. 
\end{observation}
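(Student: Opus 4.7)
The plan is to exploit the already-established minor-axis symmetry of self-intersected 4-periodics: the four vertices split into two mirror pairs, which after a suitable relabelling take the form $P_1=(x_1,y_1),\, P_3=(-x_1,y_1)$ and $P_2=(x_2,y_2),\, P_4=(-x_2,y_2)$. The sides of the outer polygon are the four tangent lines $T_i$ to the billiard at $P_i$, and the two pairs of opposite sides are precisely $\{T_1,T_3\}$ and $\{T_2,T_4\}$, each pair being the tangents at a mirror couple.

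The first step is to compute the two intersection points explicitly. Writing the tangent to the billiard at $(x_0,y_0)$ as $x x_0/a^2+y y_0/b^2=1$, the system $T_1\cap T_3$ reads
\[
\frac{x_1 x}{a^2}+\frac{y_1 y}{b^2}=1,\qquad \frac{-x_1 x}{a^2}+\frac{y_1 y}{b^2}=1.
\]
Adding and subtracting yields $x=0$ and $y=b^2/y_1$, so $T_1\cap T_3=(0,b^2/y_1)$. The same argument gives $T_2\cap T_4=(0,b^2/y_2)$. In particular both intersections lie on the minor axis, which is already the vertical line through $C$; thus they are candidates for the endpoints of the vertical diameter of $\mathcal{C}=(C,R)$. (Equivalently, each intersection is the pole, with respect to the billiard, of a horizontal chord $P_1P_3$ or $P_2P_4$.)

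The second step is to identify these two points with the top and bottom of $\mathcal{C}$. Using the parametrization $P_1(u)=(au,\,b\sqrt{1-u^2})$ and the explicit formulas for $C_y$ and $R$ from the preceding theorem, a direct simplification gives
\[
C_y+R=\frac{(c^2u^2-a^2+2b^2)+(a^2-c^2u^2)}{2b\sqrt{1-u^2}}=\frac{b}{\sqrt{1-u^2}}=\frac{b^2}{y_1},
\]
matching $T_1\cap T_3$. The analogous check for $C_y-R$ reduces, using $c^2=a^2-b^2$, to the identity $b^2/y_2=-c^2\sqrt{1-u^2}/b$, i.e. $y_2=-b^3/(c^2\sqrt{1-u^2})$, which is exactly the second-vertex formula for the self-intersected 4-periodic recorded in Appendix~\ref{app:4-periodic}.

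The only real obstacle is invoking that closed-form value of $y_2$; everything else is algebra. If one prefers to avoid that explicit expression, the required identity $y_1 y_2=-b^4/(c^2(1-u^2))$ can instead be extracted from the tangency condition that the two segments $P_1P_2$ and $P_4P_1$ (meeting at $P_1$) touch the common confocal hyperbolic caustic of the preceding proposition, which in Joachimsthal form pins down the product $y_1y_2$ directly and bypasses any need for a separate formula for $P_2$.
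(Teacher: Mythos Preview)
Your argument is correct. The paper states this as an \emph{Observation} without proof, so there is no reference argument to compare against; you have supplied a clean verification where the paper offers none.

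The structure is exactly right: the minor-axis symmetry forces the opposite tangent-line pairs $\{T_1,T_3\}$ and $\{T_2,T_4\}$ to meet on the $y$-axis at the poles $(0,b^2/y_1)$ and $(0,b^2/y_2)$ of the horizontal chords, and the check against $C_y\pm R$ goes through as you wrote. Your computation of $C_y-R=-c^2\sqrt{1-u^2}/b$ is correct (using $a^2-b^2=c^2$), and it matches $b^2/y_2$ via the explicit $y_2=-b^3/(c^2\sqrt{1-u^2})$ from the appendix, so nothing is left hanging. The final paragraph suggesting an alternative route through the Joachimsthal tangency condition is a nice remark but unnecessary here, since the closed form for $P_2$ is already available in the paper and invoking it is entirely legitimate.
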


\section{Deriving both Simple and Self-Intersected Invariants}
\label{sec:invariants}
In this section, we derive expressions for selected invariants introduced in \cite{reznik2020-invariants}, specifically for ``low-N'' cases, e.g., N=3,4,5,6,8. In that publication, each invariant is identified by a 3-digit code, e.g., $k_{101}$, $k_{102}$, etc. Table~\ref{tab:invariants} lists the invariants considered herein. The quantities involved are defined next.

Let $\theta_i$ denote the ith N-periodic angle.
Let $A$ the signed area of an N-periodic. Referring to Figure~\ref{fig:inner-outer}, singly-primed quantities (e.g., $\theta_i'$, $A'$, etc.), etc., always refer to the {\em outer polygon}: its  sides are tangent to the EB at the $P_i$. Likewise, doubly-primed quantities ($\theta_i''$, $A''$, etc.) refer to the {\em inner polygon}: its vertices lie at the touchpoints of N-periodic sides with the caustic. More details on said quantities appear in Appendix~\ref{app:invariants}.

Recall $k_{101}=J L-N$, as introduced in \cite{reznik2020-intelligencer,bialy2020-invariants}.

Referring to Figure~\ref{fig:inversive}, the $f_1$-{\em inversive polygon} has vertices at inversions of the $P_i$ with respect to a unit circle centered on $f_1$. Quantities such as $L_1^\dagger$, $A_1^\dagger$, etc., refer to perimeter, area, etc. of said polygon.

\begin{figure}
    \centering
    \includegraphics[width=.66\textwidth]{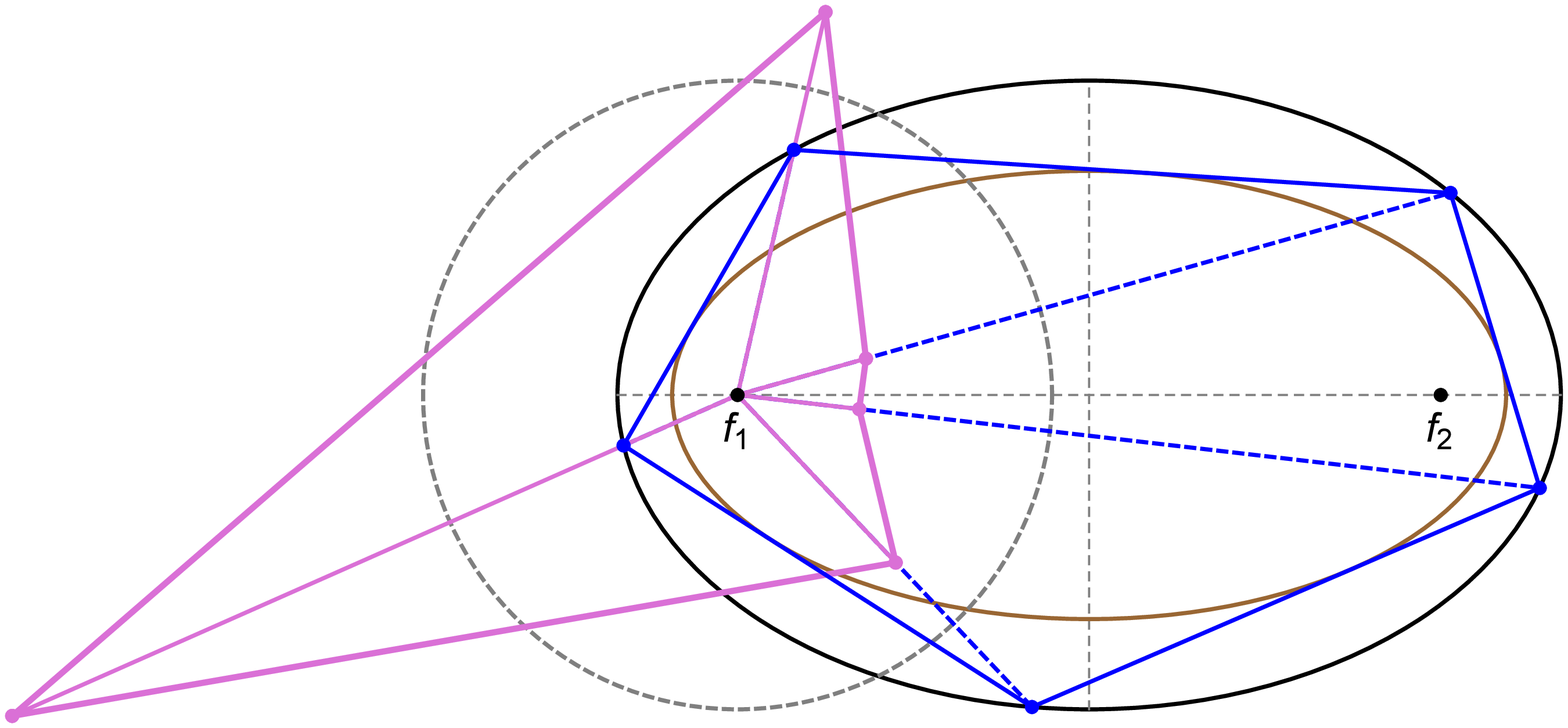}
      \caption{Focus-inversive 5-periodic (pink) whose vertices are inversions of the $P_i$ (blue) with respect to a unit circle (dashed black) centered on $f_1$. It turns out its perimeter is also invariant over the family as is the sum of its spoke lengths (pink lines) \href{https://youtu.be/wkstGKq5jOo}{Video}.}
    \label{fig:inversive}
\end{figure}

\begin{table}
\begin{tabular}{|c|l|c|l|l|c|}
\hline
code & invariant & valid N & derived & proofs \\
\hline
$k_{101}$ & $\sum\cos\theta_i$ & all & n/a & \cite{akopyan2020-invariants,bialy2020-invariants} \\
$k_{102}$ & $\prod{\cos\theta'_i}$ &  all & 3,4,5,$5_i$,6,$6_i$,$6_{ii}$ & \cite{akopyan2020-invariants,bialy2020-invariants} \\
$k_{103}$ & $A'/A$ &  odd & 3,5,$5_i$ & \cite{akopyan2020-invariants,caliz2020-area-product} \\
$k_{104}$ & $\sum{\cos(2\theta'_i)}$ &  all & 3,4,5,$5_i$,6,$6_i$,8 & \cite{akopyan19_private_half_sines} \\ 
$k_{105}$ & $\prod{\sin(\theta_i/2)}$ &  odd & 3,5,$5_i$ &  \cite{akopyan19_private_half_sines} \\ 
$k_{106}$ & $A'A$ &  even & 4,6,$6_i$,$6_{ii}$ & \cite{caliz2020-area-product} \\
$k_{110}$ & $A\,A''$ &  even & 4,6,$6_i$,$6_{ii}$ & ? \\
$^{\dagger}{k_{119}}$ & $\sum{\kappa_i^{2/3}}$ & all & 3,4,6 & \cite{akopyan2020-invariants,stachel2020-private} \\
\hline
$k_{802,a}$ & $\sum{1/d_{1,i}}$ &   all & 3,4,6 & \cite{akopyan2020-invariants} \\
$k_{803}$ & $L_1^\dagger$ &   all & 3,4,6 & ? \\
${^\dagger}{k_{804}}$ & $\sum\cos{\theta_{1,i}^\dagger}$ &   ${\neq}4$ & 3 & ? \\
$k_{805,a}$ & $A\,A_1^\dagger$ &  ${\equiv}\,0 \Mod{4}$ & 4,8 & ? \\
$k_{806}$ & $A/A_1^\dagger$ &  ${\equiv}\,2 \Mod{4}$ & 6 & ? \\
$k_{807}$ & $A_1^\dagger.A_2^\dagger$ &  odd & 3 & ? \\
\hline
\end{tabular}
\caption{List of selected invariants taken from \cite{reznik2020-invariants} as well as the low-N cases (column ``derived'') for expressions are derived herein. ${^\dagger}$ co-discovered with P. Roitman. A closed-form expression for $k_{119}$ was derived by H. Stachel; see \eqref{eqn:k119}.}.
\label{tab:invariants}
\end{table}

\subsection{Invariants for N=3}
\label{sec:invariants-n3}
AS before, let  $\delta=\sqrt{a^4-a^2 b^2+b^4}$.  For $N=3$ explicit expressions for $J$ and $L$ have been derived \cite{garcia2020-new-properties}:

\begin{align}
J=&\frac{\sqrt{2\delta-a^2-b^2}}{c^2}\nonumber\\
L=&2(\delta+a^2+b^2)J\label{eqn:lj}
\end{align}

\noindent When $a=b$, $J=\sqrt{3}/2$ and when $a/b{\to}\infty$, $J{\to}0$. 

\begin{proposition}
For $N=3$, $k_{102}=(J L)/4-1$.
\label{prop:n3-k102}
\end{proposition}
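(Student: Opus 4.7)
The plan is to bypass the explicit expressions \eqref{eqn:lj} for $J$ and $L$ and instead express both $k_{101}$ and $k_{102}$ in terms of the single ratio $r/R$ of the 3-periodic's inradius to its circumradius, then eliminate $r/R$ using the already-established identity $k_{101}=JL-3$ from \cite{reznik2020-intelligencer,bialy2020-invariants}.

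First I would compute the outer triangle's interior angles $\theta'_i$ in terms of the 3-periodic's angles $\theta_i$. By the reflection law the EB-normal at $P_i$ bisects $\theta_i$, so the EB-tangent at $P_i$ makes angle $\pi/2-\theta_i/2$ with each billiard chord meeting there. Applying this at both ends of the small triangle $T_i P_i P_{i+1}$ cut off by consecutive tangents (where $T_i$ is the outer-polygon vertex between $P_i$ and $P_{i+1}$), the remaining angle at $T_i$ is
\[
\theta'_i \;=\; \tfrac{\theta_i}{2}+\tfrac{\theta_{i+1}}{2} \;=\; \tfrac{\pi}{2}-\tfrac{\theta_{i+2}}{2},
\]
where the second equality uses $\theta_1+\theta_2+\theta_3=\pi$. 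Hence $\cos\theta'_i=\sin(\theta_{i+2}/2)$, and therefore
\[
k_{102}\;=\;\prod_{i=1}^{3}\cos\theta'_i\;=\;\prod_{i=1}^{3}\sin(\theta_i/2).
\]

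Next I would invoke two classical identities for an arbitrary triangle with angles $\theta_i$, inradius $r$, and circumradius $R$, namely $\sum\cos\theta_i=1+r/R$ and $\prod\sin(\theta_i/2)=r/(4R)$. Combining the first with $k_{101}=\sum\cos\theta_i=JL-3$ yields $r/R=JL-4$, and substituting into the second gives
\[
k_{102}\;=\;\frac{r}{4R}\;=\;\frac{JL-4}{4}\;=\;\frac{JL}{4}-1,
\]
as claimed. There is no real obstacle here; the only point requiring care is sign/orientation, i.e.\ verifying that for a simple 3-periodic the outer polygon is a convex triangle so each $\theta'_i\in(0,\pi)$ and the classical identities apply unmodified. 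Notably, this approach avoids ever invoking the explicit radical expressions \eqref{eqn:lj}, making the derivation considerably shorter than the CAS-based method described in the preliminaries.
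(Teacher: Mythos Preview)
Your proof is correct, and the overall skeleton---reduce everything to $r/R$ and then use $r/R=JL-4$---coincides with the paper's. The difference is in the bridge from $k_{102}$ to $r/R$. The paper identifies the outer polygon as the excentral triangle and invokes the orthic-triangle identity $r_h/R_h=4\prod|\cos\theta_i|$, together with the fact that a triangle is the orthic of its excentral, to conclude $r/R=4\prod\cos\theta'_i$. You instead compute the outer angles directly from the reflection law as $\theta'_i=\pi/2-\theta_{i+2}/2$, obtain $k_{102}=\prod\sin(\theta_i/2)$, and then cite the half-angle identity $\prod\sin(\theta_i/2)=r/(4R)$. Your route is slightly more self-contained (no need for the orthic/excentral detour), and it has the pleasant side effect of proving $k_{102}=k_{105}$ for $N=3$ in one stroke, which the paper establishes separately in a later proposition.
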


\begin{proof}
We've shown $\sum_{i=1}^3{\cos{\theta_i}}=JL-3$ is invariant for the $N=3$ family \cite{garcia2020-new-properties}. For any triangle $\sum_{i=1}^3{\cos{\theta_i}}=1+r/R$ \cite{mw}, so it follows that $r/R=JL-4$ is also invariant. Let $r_h,R_h$ be the Orthic Triangle's Inradius and Circumradius. The relation $r_h/ R_h=4\prod_{i=1}^3{|\cos\theta_i|}$ is well-known \cite[Orthic Triangle]{mw}. Since a triangle is the Orthic of its Excentral Triangle, we can write $r/R=4\prod_{i=1}^3{\cos\theta'_i}$, where $\theta_i'$ are the Excentral angles which are always acute \cite{mw} (absolute value can be dropped), yielding the claim.
\end{proof}

\begin{proposition}
For $N=3$, $k_{103}=k_{109}=2/(k_{101}-1)=2/(J L - 4)$.
\end{proposition}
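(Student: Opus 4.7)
The plan is to recognize that, for $N=3$, the outer polygon of Figure~\ref{fig:inner-outer} is precisely the excentral triangle of the 3-periodic, after which the claim reduces to a classical triangle identity combined with the formula $r/R = JL-4$ already established inside the proof of Proposition~\ref{prop:n3-k102}.

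First I would justify the excentral identification. By definition, the $i$-th side of the outer polygon is the tangent line to the EB at the 3-periodic vertex $P_i$. The billiard reflection law forces the inward normal at $P_i$ to bisect the interior angle of the 3-periodic at $P_i$, so the tangent at $P_i$ is the unique line through $P_i$ perpendicular to that interior bisector. This is exactly the characterization of the sides of the excentral triangle $I_A I_B I_C$: each of its sides passes through a vertex of the reference triangle and is perpendicular to the internal angle bisector there. Hence the outer polygon coincides with the excentral triangle.

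Next I would invoke the textbook identity $A_{\text{excentral}}/A = 2R/r$, which follows, e.g., from $A_{\text{excentral}} = 2Rs$ and $A = rs$, where $R$, $r$, $s$ are the circumradius, inradius, and semi-perimeter of the 3-periodic. The proof of Proposition~\ref{prop:n3-k102} already delivers $r/R = JL - 4 = k_{101}-1$, so
\[
k_{103} \;=\; \frac{A'}{A} \;=\; \frac{2R}{r} \;=\; \frac{2}{JL-4} \;=\; \frac{2}{k_{101}-1},
\]
which gives the first equality. For $k_{109}$ I would look up its definition in \cite{reznik2020-invariants} and translate it into the same ratio $2R/r$ via a standard triangle identity (many natural $N=3$ quantities, such as products of half-angle trigonometric expressions or ratios involving the contact/orthic triangles, reduce to $r/R$ or its reciprocal), so the chain closes.

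The only real obstacle is bookkeeping: the excentral identification is a one-line reflection-law argument and the area ratio is a textbook fact, while the non-trivial input $r/R = JL-4$ is already available from Proposition~\ref{prop:n3-k102}. The minor residual work is pinning down $k_{109}$'s precise definition in the companion catalog so that its recasting in terms of $2R/r$ can be verified directly.
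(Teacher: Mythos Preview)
Your treatment of $k_{103}$ is essentially the paper's own: identify the outer polygon with the excentral triangle, quote $A_{\text{exc}}/A=2R/r$, and plug in $r/R=JL-4$ from Proposition~\ref{prop:n3-k102}. Your reflection-law justification of the excentral identification is actually more explicit than what the paper writes.

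The gap is $k_{109}$. You do not say what it is and you do not prove it; ``look it up and translate via a standard identity'' is a placeholder, not an argument. In the paper, $k_{109}$ is the ratio $A/A''$ involving the \emph{inner} polygon, whose vertices are the tangency points of the 3-periodic's sides with the confocal caustic. For $N=3$ that caustic is the Mandart inellipse, so the inner polygon is the extouch triangle. The paper then closes the chain with the classical equality
\[
\frac{A'}{A}\;=\;\frac{A}{A''}\;=\;\frac{s_1 s_2 s_3}{r^2 L}\;=\;\frac{2R}{r},
\]
valid for any triangle (excentral and extouch areas both reducing to the same expression), which gives $k_{109}=k_{103}$ at once. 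Without identifying the inner polygon as the extouch triangle you have no entry point into that identity, and the claim $k_{109}=2/(JL-4)$ remains unproven.
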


\begin{proof}
Given a triangle $A'$ (resp. $A''$) refers to the area of the Excentral (resp. Extouch) triangles. The ratios $A'/A$ and $A/A''$ are equal. Actually, $A'/A=A/A''=(s_1 s_2 s_3)/(r^2 L)$, where $s_i$ are the sides, $L$ the perimeter, and $r$ the Inradius \cite[Excentral,Extouch]{mw}. Also known is that $A'/A=2R/r$ \cite{johnson29}. Since $r/R=\sum_{i=1}^3{\cos\theta_i}-1=k_{101}-1$ \cite[Inradius]{mw}, the result follows.
\end{proof}

\begin{proposition}
For $N=3$, $k_{104}=-k_{101}$ and is given by:

\[
 k_{104}={\frac { \left( {a}^{2}+{b}^{2} \right)  \left(  {a}^{2}+{b}^{2}-2\,
\delta \right) }{ c^4  } }=3-J L
\]
\end{proposition}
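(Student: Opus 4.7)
The plan is to prove the two assertions separately: first the identity $k_{104}=-k_{101}$ at the level of triangle geometry, and then the closed-form expression by plugging in the known formulas for $J$ and $L$ from \eqref{eqn:lj}.

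The geometric step relies on identifying the outer polygon. By the billiard reflection law, the normal at each vertex $P_i$ bisects the interior angle $\theta_i$ of the 3-periodic, so the tangent line to the EB at $P_i$ bisects the exterior angle. The outer polygon, whose sides are precisely these three tangent lines, is therefore bounded by the three external angle bisectors of $P_1P_2P_3$, i.e.\ it coincides with the \emph{excentral triangle}. It is classical that the angles of the excentral triangle of a triangle with angles $\theta_i$ are $\theta'_i=\pi/2-\theta_i/2$, hence $2\theta'_i=\pi-\theta_i$ and $\cos(2\theta'_i)=-\cos\theta_i$. Summing $i=1,2,3$ yields $k_{104}=-k_{101}$, and combined with $k_{101}=JL-3$ (recalled in the proof of Proposition~\ref{prop:n3-k102}), we get $k_{104}=3-JL$.

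For the closed form, I would substitute $L=2(\delta+a^2+b^2)J$ and then $J^2=(2\delta-a^2-b^2)/c^4$ from \eqref{eqn:lj} to write
\[
JL \;=\; \frac{2(\delta+a^2+b^2)(2\delta-a^2-b^2)}{c^4}.
\]
Expanding the numerator with $s:=a^2+b^2$ gives $2\delta^2+\delta s-s^2$, and using $\delta^2=a^4-a^2b^2+b^4=s^2-3a^2b^2$ turns this into $s^2-6a^2b^2+\delta s$. Then
\[
3-JL \;=\; \frac{3c^4-2s^2+12a^2b^2-2\delta s}{c^4},
\]
and the identity $c^4=s^2-4a^2b^2$ collapses the constant part to $3c^4+12a^2b^2=3s^2$, leaving $s(s-2\delta)/c^4$, which is the stated formula.

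There is no real obstacle: the only nontrivial ingredient is recognising the outer polygon as the excentral triangle, which is immediate from the reflection law. The remainder is bookkeeping, and the one mildly delicate cancellation is the elimination of the $\delta^2$ term via $\delta^2=s^2-3a^2b^2$ so that $\delta$ appears only linearly in the final numerator.
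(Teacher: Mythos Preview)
Your argument is correct. The identification of the outer polygon with the excentral triangle is exactly the mechanism the paper exploits in the neighbouring Propositions on $k_{102}$, $k_{103}$ and $k_{105}$, and the excentral-angle formula $\theta'_i=\pi/2-\theta_i/2$ immediately gives $\cos(2\theta'_i)=-\cos\theta_i$, hence $k_{104}=-k_{101}=3-JL$. Your algebraic derivation of the closed form from \eqref{eqn:lj} is also clean; the substitution $\delta^2=s^2-3a^2b^2$ and $c^4=s^2-4a^2b^2$ does the job.

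The paper itself omits the proof of this proposition, falling back on the general CAS-based procedure announced in Section~2 (compute at the axis-symmetric configuration $P_1=(a,0)$, simplify symbolically, and cross-check numerically). So your route is genuinely different in character: where the paper would certify the identity by symbolic computation, you derive it from the excentral-triangle angle relation. The payoff is that your proof explains \emph{why} $k_{104}=-k_{101}$ rather than merely confirming it, and it requires no CAS at all; the paper's method, on the other hand, is uniform across all the invariants in the table and does not depend on recognising any particular triangle-geometry identity.
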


\begin{proposition}
For $N=3$, $k_{105}=(J L)/4-1 = k_{102}$.
\end{proposition}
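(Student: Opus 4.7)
The plan is to reduce $k_{105}$ directly to $k_{102}$ via a classical trigonometric identity between a triangle and its excentral triangle, and then invoke Proposition~\ref{prop:n3-k102}.

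First I would recall the standard fact that if $\theta_1,\theta_2,\theta_3$ are the interior angles of a triangle $T$, then the angles $\theta'_i$ of its excentral triangle $T'$ are
\[
\theta'_i \;=\; \frac{\pi}{2} - \frac{\theta_i}{2},\qquad i=1,2,3.
\]
This identity is already used implicitly in the proof of Proposition~\ref{prop:n3-k102} (it is what guarantees the excentral angles are acute). It is a purely metric statement about triangles, so it applies verbatim to each member of the $N=3$ Poncelet family, with $T$ the billiard triangle and $T'$ its outer (excentral) triangle.

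Taking cosine of both sides gives $\cos\theta'_i = \sin(\theta_i/2)$, and forming the product over $i=1,2,3$ yields
\[
k_{102} \;=\; \prod_{i=1}^{3}\cos\theta'_i \;=\; \prod_{i=1}^{3}\sin(\theta_i/2) \;=\; k_{105}.
\]
Combining with Proposition~\ref{prop:n3-k102}, which states $k_{102}=(JL)/4 - 1$, completes the proof.

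There is essentially no obstacle here, since everything reduces to the excentral-angle identity and the preceding proposition; the whole argument is two lines once that identity is named. The only subtlety worth flagging is a sign/orientation check: one should verify that the $\theta_i$ used in defining $k_{105}$ are indeed the interior angles of the $N=3$ periodic (so that $\theta_i/2\in(0,\pi/2)$ and the sines are positive), which is the case since the $N=3$ family consists of genuine (non-self-intersected) triangles.
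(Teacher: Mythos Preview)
Your proof is correct. It differs from the paper's in its choice of bridge between $k_{105}$ and $k_{102}$: the paper invokes the classical identity $r/R = 4\prod_{i}\sin(\theta_i/2)$ and combines it with the relation $r/R = JL-4$ established inside the proof of Proposition~\ref{prop:n3-k102}, so both products are matched indirectly through $r/R$. You instead use the excentral-angle relation $\theta'_i = \pi/2 - \theta_i/2$ to equate the two products term-by-term, showing $k_{105}=k_{102}$ before ever touching $r/R$ or $JL$. Your route is slightly more self-contained (one identity instead of two, no appeal to what happens \emph{inside} the earlier proof), while the paper's route has the minor advantage of citing a named textbook formula rather than the excentral-angle relation. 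Either way the argument is essentially a two-line consequence of standard triangle trigonometry.
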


\begin{proof}
Let $r,R$ be a triangle's Inradius and Circumradius. The identity $r/R=4\prod_{i=1}^3{\sin(\theta_i/2)}$ holds for any triangle \cite[Inradius]{mw}, which with Proposition~\ref{prop:n3-k102} This completes the proof.
\end{proof}

\begin{proposition}
For $N=3$, $k_{119}$ is given by:
 
\[(k_{119})^3=\frac{2J^3 L}{(JL-4)^2}, \;\;\;
k_{119}=\frac{a^2+b^2+\delta}{(ab)^{\frac{4}{3}} }
\]
\end{proposition}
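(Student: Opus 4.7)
The plan is to reduce $\kappa_i^{2/3}$ to a simple geometric quantity, evaluate the resulting sum at the axis-symmetric $N=3$ configuration, and then deduce the $(J,L)$-form of the identity by elementary manipulation of \eqref{eqn:lj}.

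For the first step, the implicit-curve formula applied to \eqref{eqn:billiard-f} gives $\kappa = (ab)^4/(b^4 x^2 + a^4 y^2)^{3/2}$ at a boundary point $(x,y)$, while the distance from the billiard center $O$ to the tangent line at that point is $p = (ab)^2/\sqrt{b^4 x^2 + a^4 y^2}$. These combine cleanly into $\kappa^{2/3} = p^2/(ab)^{4/3}$. Hence the claim $k_{119} = (a^2+b^2+\delta)/(ab)^{4/3}$ is equivalent to
$\sum_{i=1}^3 p_i^2 = a^2+b^2+\delta$,
where $p_i$ is the distance from $O$ to the $i$-th outer-polygon side (the tangent to the billiard at $P_i$).

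For the second step, invariance of $k_{119}$ (proved by Stachel and cited in Table~\ref{tab:invariants}) lets me verify $\sum p_i^2$ at just one representative orbit: take $P_1 = (a,0)$ and $P_{2,3} = (-a_c, \pm y_0)$. Tangency of the vertical chord $P_2 P_3$ to the caustic forces $x_0 = -a_c$; solving the confocal relation $a_c^2 - b_c^2 = c^2$ together with the tangency of $P_1 P_2$ to the caustic yields $a_c = a(\delta - b^2)/c^2$ and $b_c = b(a^2 - \delta)/c^2$. Using the compact form $p^2 = a^4 b^2/(a^4 - c^2 x^2)$, I get $p_1^2 = a^2$ immediately, and for $x=-a_c$ the identity $(\delta - b^2)(\delta + b^2) = a^2 c^2$ collapses $p_2^2 = p_3^2 = (\delta + b^2)/2$; summing gives $a^2 + b^2 + \delta$. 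The hard part here is the symbolic bookkeeping: the expressions for $p_2^2$ balloon unless one systematically uses the rationalizing identities $(\delta \pm b^2)(\delta \mp b^2) = \mp a^2 c^2$ and $(a^2 - \delta)(a^2 + \delta) = b^2 c^2$ to eliminate $\delta$ from denominators.

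Finally, to reconcile the two expressions, I substitute \eqref{eqn:lj} into $2 J^3 L/(JL - 4)^2$. Setting $s = a^2 + b^2$, the identities $\delta^2 = s^2 - 3 a^2 b^2$ and $c^4 = s^2 - 4 a^2 b^2$ reduce $JL$ to $2(s^2 + s\delta - 6 a^2 b^2)/c^4$ and, after further cancellation, reduce $JL - 4$ to the short form $2 a^2 b^2 J^2/(s + \delta)$. Squaring this and applying $L = 2(s + \delta) J$ once more cancels all $J$-dependence in the quotient, leaving $(s + \delta)^3/(ab)^4 = (k_{119})^3$, as required.
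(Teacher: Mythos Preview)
Your argument is correct. The overall strategy---evaluate $k_{119}$ at the axis-symmetric orbit $P_1=(a,0)$, then massage the result into the $(J,L)$ form using \eqref{eqn:lj}---matches the paper's stated methodology and its one-line proof (``Use the expressions for $L,J$''). Where you differ is in the execution of the first step. The paper effectively leans on Stachel's closed form $k_{119}=L/\bigl(2J(ab)^{4/3}\bigr)$ from \eqref{eqn:k119} (which, with $L=2(\delta+a^2+b^2)J$, gives the $(a,b,\delta)$ expression in one stroke) or, alternatively, on CAS simplification of $\sum\kappa_i^{2/3}$. You instead derive the pointwise identity $\kappa^{2/3}=p^2/(ab)^{4/3}$, with $p$ the distance from $O$ to the tangent at $P_i$, and then compute $\sum p_i^2=a^2+b^2+\delta$ by hand using the rationalizing relations for $\delta$. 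This is a genuinely more elementary and more geometric route: it exposes $k_{119}$ as a rescaled sum of squared pedal distances, avoids any black-box formula or CAS, and makes the final $(J,L)$ reduction (via $JL-4=2a^2b^2J^2/(s+\delta)$) a short algebraic exercise. The paper's route is faster once Stachel's identity is in hand; yours is more self-contained and arguably more informative.
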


\begin{proof}
Use the expressions for $L,J$ in \eqref{eqn:lj}.
\end{proof}

\begin{proposition}
For $N=3$, $k_{802,a}$ is given by:

\[  k_{802,a}=\frac {{a}^{2}+{b}^{2}+\delta}{a{b}^{
2}}= {\frac {J\sqrt{2}\sqrt{ J\,L+\sqrt {9-2\,JL}-3 }}{ J L-4 }}\]

\end{proposition}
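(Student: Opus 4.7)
The plan is to establish the closed form $(a^2+b^2+\delta)/(ab^2)$ first by evaluating $k_{802,a}$ at a conveniently symmetric member of the $N{=}3$ family (invariance being already recorded in Table~\ref{tab:invariants} and proved in \cite{akopyan2020-invariants}), then to transform this into the $(J,L)$ expression using \eqref{eqn:lj}.

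For the closed form, I would specialize to the isosceles $N{=}3$ periodic with $P_1=(a,0)$ and $P_2,P_3=(x_0,\pm y_0)$ reflected across the major axis. The chord $P_2P_3$ is vertical, so it must coincide with the vertical tangent $x=-a_c$ to the $N{=}3$ caustic, whose semi-major axis $a_c=a(\delta-b^2)/c^2$ is given in Appendix~\ref{app:vertices-caustics}. The standard focal-chord identity $d_{1,i}=a+(c/a)x_i$ then reduces the sum to $\tfrac{1}{a+c}+\tfrac{2a}{a^2-ca_c}$; clearing denominators with $a^2-c^2=b^2$ and $\delta^2=a^4-a^2b^2+b^4$ collapses this to $(a^2+b^2+\delta)/(ab^2)$, the key useful identity being $(ac+b^2)^2-\delta^2=2ab^2c$.

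For the $(J,L)$ form, I would work purely with identities derivable from \eqref{eqn:lj}. A direct substitution gives the clean relation $JL-3=(a^2+b^2)J^2$. The crucial observation is that $(a^2+b^2-2\delta)^2$ equals $5(a^2+b^2)^2-12a^2b^2-4(a^2+b^2)\delta$, so that $9-2JL=(a^2+b^2-2\delta)^2/c^4$ is a perfect square, and $\sqrt{9-2JL}=(2\delta-a^2-b^2)/c^2=J^2c^2$. This turns $JL+\sqrt{9-2JL}-3$ into $J^2(a^2+b^2+c^2)=2a^2J^2$, so the numerator $J\sqrt{2}\sqrt{JL+\sqrt{9-2JL}-3}$ simplifies to $2aJ^2$. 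What remains is to verify $2aJ^2/(JL-4)=(a^2+b^2+\delta)/(ab^2)$; after cross-multiplying and using $JL-4=(a^2+b^2)J^2-1$, this reduces to the polynomial identity $(a^2+b^2+\delta)[(a^2+b^2)J^2-1]=2a^2b^2J^2$, which expands routinely once $J^2=(2\delta-a^2-b^2)/c^4$ and $\delta^2=a^4-a^2b^2+b^4$ are substituted.

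The main obstacle is noticing that $\sqrt{9-2JL}$ conceals a perfect square in $s=a^2+b^2$ and $\delta$; once the factorization $9-2JL=(s-2\delta)^2/c^4$ is in hand, the auxiliary identity $s+c^2=2a^2$ makes every remaining cancellation happen cleanly. Without those two observations the nested radical in the target expression looks impenetrable, but together they turn the proof into two short algebraic reductions.
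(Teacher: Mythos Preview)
Your proof is correct and follows essentially the same approach the paper prescribes in its ``word about our proof method'': evaluate the invariant at the axis-symmetric $N{=}3$ orbit with $P_1=(a,0)$ and simplify. Where the paper defers both the $(a,b)$ and $(J,L)$ simplifications to CAS, you carry them out by hand via the two key identities $(ac+b^2)^2-\delta^2=2ab^2c$ and $9-2JL=(a^2+b^2-2\delta)^2/c^4$; these are exactly the observations that collapse the computation, and your argument is complete.
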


\begin{remark}
For $N=3$, $k_{803}$ is given by:

\[k_{803}=\rho \frac {\sqrt { \left( 8\,{a}^{4}+4\,{a}^{2}{b}^{2}+2\,{b}^{4}
 \right) \delta+8\,{a}^{6}+3\,{a}^{2}{b}^{4}+2\,{b}^{6}}}{{a}^{2}{b}^{
2}}\]
\end{remark}

\begin{proposition}
For $N=3$, $k_{804}$ is given by:

\[k_{804}=\frac{\delta (a^2+c^2-\delta)}{a^2c^2}\]
\end{proposition}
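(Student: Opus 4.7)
The plan is to follow the standard computational template described under ``A word about our proof method'' in Section~\ref{sec:invariants}. I begin with the axis-symmetric 3-periodic, whose first vertex is $P_1=(a,0)$; the other two vertices $P_2,P_3$ are reflections of each other across the major axis and their coordinates (given in Appendix~\ref{app:vertices-caustics}, or obtainable from the Cayley-type condition specialized to $N=3$) are elementary functions of $a,b$ involving $\delta=\sqrt{a^4-a^2b^2+b^4}$. Since the invariance of $k_{804}$ across the 3-periodic family is itself only empirical (the ``proofs'' column for $k_{804}$ in Table~\ref{tab:invariants} reads ``?''), the approach is to compute the symbolic value at this single configuration and then verify invariance numerically over several starting vertices and several aspect ratios $a/b$.

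Next I construct the focus-inversive triangle. Inverting each $P_i$ with respect to a unit circle $\mathcal{C}^\dagger$ centered on $f_1=(-c,0)$ gives vertices $Q_i=f_1+(P_i-f_1)/d_{1,i}^2$, where $d_{1,i}=|P_i-f_1|=a+(c/a)\,x_i$ is the standard focal-distance expression for a point on the ellipse with abscissa $x_i$. The classical inversion-distance identity
\[
|Q_i-Q_j|=\frac{|P_i-P_j|}{d_{1,i}\,d_{1,j}}
\]
then yields the three side lengths of the inversive triangle in closed form as rational-plus-radical expressions in $a,b$.

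To extract $k_{804}=\sum_{i=1}^3\cos\theta_{1,i}^\dagger$, I have two natural options. The cleaner one is to invoke the general triangle identity $\sum\cos\theta=1+r/R$, compute the inradius and circumradius of the inversive triangle from its sides via Heron's formula and $R=s_1s_2s_3/(4\cdot\mathrm{area})$, and sum. Alternatively, the Law of Cosines can be applied to each angle directly. Either route delivers an algebraic function of $a,b,\delta$ which, the claim asserts, collapses to $\delta(a^2+c^2-\delta)/(a^2c^2)$.

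The main obstacle will be the symbolic simplification. The raw output contains nested radicals and repeated occurrences of $\delta$; reducing it to the stated compact form relies on the minimal polynomial $\delta^2=a^4-a^2b^2+b^4$ together with $c^2=a^2-b^2$ to eliminate higher powers of $\delta$, and on recognizing factorizations such as $\delta^2-a^2c^2=b^2(a^2-\delta)+b^4$. As elsewhere in the paper, this step is carried out with CAS assistance, after which the resulting closed-form identity for $k_{804}$ is cross-checked numerically on a range of billiard aspect ratios to confirm invariance.
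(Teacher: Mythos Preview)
Your proposal is correct and follows essentially the same approach as the paper: the paper omits an explicit proof for this proposition and instead relies on the general computational template announced under ``A word about our proof method'' (axis-symmetric configuration $P_1=(a,0)$, symbolic evaluation, CAS simplification, numerical cross-check), which is precisely what you have outlined in detail. Your added specifics---the inversion-distance identity and the option of using $\sum\cos\theta=1+r/R$ versus the Law of Cosines---are sensible implementations of that template but do not depart from it.
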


\begin{proposition}
For $N=3$, $k_{807}$ is given by:

\[ k_{807}= \frac{\rho^8}{8 a^8 b^2}  \left[\left( {a}^{4}+2\,{a}^{2}{b}^{2}+4\,{b}^{4} \right)\delta +  {a}^{6}+(3/2)\,{a}^{4}{b}^{2}+4\,{b}^{6} \right]
\]
\end{proposition}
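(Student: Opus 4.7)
The plan is to use the invariance of $k_{807}$ (established earlier in the paper) to evaluate it on any single $N=3$ configuration, then simplify. The natural choice, consistent with the paper's stated proof method, is the axis-symmetric triangle with $P_1=(a,0)$; from the closed-form expressions in Appendix~\ref{app:vertices-caustics}, the other two vertices $P_2=(x_2,y_2)$, $P_3=(x_2,-y_2)$ are explicit surd functions of $a,b$ (equivalently $\delta$).

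Next, express each focal-inversive area in closed form. A short computation (based on $|P_i^\dagger-P_j^\dagger|=\rho^2|P_i-P_j|/(d_id_j)$ and decomposing the inverted triangle about the inversion center) shows that if the $P_i$ are inverted through a circle of radius $\rho$ centered at $O$, the resulting triangle has area
\[
A_O^\dagger=\rho^4\,\frac{|p_O|\,|A|}{d_1^2\,d_2^2\,d_3^2},
\]
where $A$ is the signed area of $P_1P_2P_3$, $d_i=|P_i-O|$, and $p_O$ is the power of $O$ with respect to the circumcircle of $P_1P_2P_3$. Specializing $O=f_1$ and $O=f_2$ and using the focal-product identity $|P_i-f_1|\cdot|P_i-f_2|=a^2-(c/a)^2x_i^2$, one obtains
\[
k_{807}=A_1^\dagger A_2^\dagger=\rho^8\,\frac{A^2\,|p_1 p_2|}{\prod_{i=1}^3\bigl(a^2-(c/a)^2 x_i^2\bigr)^2}.
\]
All ingredients---the area $A$, the three $x_i$, and the focal powers $p_1,p_2$ (computed from the known circumradius and the fact that both foci lie on the major axis)---are explicit in $a,b$ for the symmetric configuration.

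Finally, substitute and simplify, repeatedly applying $c^2=a^2-b^2$ and $\delta^2=a^4-a^2b^2+b^4$, to collapse everything to the stated form
\[
\frac{\rho^8}{8\,a^8 b^2}\left[(a^4+2a^2b^2+4b^4)\delta+a^6+(3/2)a^4b^2+4b^6\right].
\]
Numerical checks across several aspect ratios $a/b$ and starting vertices $P_1$ confirm both the invariance of $k_{807}$ over the family and the symbolic identity.

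The main obstacle is algebraic bookkeeping: the eighth-power product of focal distances in the denominator and the focal-power product in the numerator conspire to produce a bulky polynomial in $a,b,\delta$ that only collapses after systematic reduction modulo $\delta^2=a^4-a^2b^2+b^4$. This step is uninstructive by hand and is expected to be offloaded to a CAS, matching steps (iii)--(iv) of the proof recipe articulated in the preliminaries.
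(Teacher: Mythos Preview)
Your proposal is correct and follows the paper's declared recipe (evaluate at the axis-symmetric $P_1=(a,0)$ configuration, simplify symbolically, check numerically). The paper gives no proof beyond that generic recipe, so in that sense you match it exactly.

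Where you go a bit further is in the intermediate identity
\[
A_O^\dagger=\rho^4\,\frac{|p_O|\,|A|}{d_1^2 d_2^2 d_3^2},
\]
which the paper never isolates: it would simply feed the explicit inverted vertices to the CAS and let it grind. Your factorization through the focal powers $p_1,p_2$ and the focal-product identity $d_{1,i}d_{2,i}=a^2-(c/a)^2x_i^2$ organizes the computation and explains \emph{why} the answer is a clean polynomial in $a,b,\delta$, whereas the paper's brute-force route gives no such insight. One small caveat: you write that the invariance of $k_{807}$ is ``established earlier in the paper,'' but in fact Table~\ref{tab:invariants} lists it with a ``?'' in the proofs column---it is an experimentally observed invariant, and the paper's step (iv) (numerical verification over many configurations) is doing the same work for them as it is for you.
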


\noindent $r$ is the radius of the inversion circle, included above for unit consistency. By default $r=1$.

\subsection{Invariants for N=4}
\label{sec:invariants-n4}
\begin{proposition}
For $N=4$, $k_{102}= 0$.
\label{prop:n4-102}
\end{proposition}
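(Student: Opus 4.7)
The plan is to use the fact, established by Akopyan and Bialy, that $k_{102}$ is invariant over the 1-parameter family of simple $N=4$ billiard orbits, so it suffices to evaluate the product at one convenient configuration. The natural choice, consistent with the paper's stated proof template, is the axis-symmetric ``rhombus'' 4-periodic with vertices $P_1=(a,0)$, $P_2=(0,b)$, $P_3=(-a,0)$, $P_4=(0,-b)$: at each vertex the normal is either horizontal or vertical and the two incident chords are mirror images of one another across that normal, confirming it is a valid billiard orbit (and it is simple, since the four sides form a convex rhombus).

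For this configuration the tangent lines to the billiard at the vertices are $x=\pm a$ and $y=\pm b$, meeting in the four points $(\pm a,\pm b)$. The outer polygon is therefore an axis-aligned rectangle, each of its interior angles $\theta_i'$ equals $\pi/2$, so $\cos\theta_i'=0$ for every $i$ and the product $k_{102}$ vanishes. Combined with invariance, this gives $k_{102}=0$ throughout the family.

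A more conceptual alternative, which avoids citing invariance as a black box, uses the director circle $x^2+y^2=a^2+b^2$ of the billiard (the locus of points from which two tangents to the billiard are orthogonal). For the $N=4$ caustic, whose confocal semi-axes are $a_c^2=a^4/(a^2+b^2)$ and $b_c^2=b^4/(a^2+b^2)$, a direct substitution shows that if $\alpha x+\beta y=1$ is tangent to the caustic (so $a_c^2\alpha^2+b_c^2\beta^2=1$), then its pole $(a^2\alpha,b^2\beta)$ with respect to the billiard satisfies $a^4\alpha^2+b^4\beta^2=a^2+b^2$ and hence lies on the director circle. Each vertex of the outer polygon is the pole of a chord $P_iP_{i+1}$, and every such chord is tangent to the caustic, so every outer-polygon vertex lies on the director circle and each $\theta_i'$ is a right angle directly. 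The principal ``obstacle'' is just a modelling choice between these two routes: the first tacitly imports invariance of $k_{102}$, while the second turns the statement into the algebraic identity that the $N=4$ caustic is the polar reciprocal of the billiard's director circle. Both reduce to short computations, so I expect no hidden difficulty.
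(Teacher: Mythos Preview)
Your proposal is correct. Your second route is precisely the paper's argument: the paper simply asserts that the outer polygon of every simple 4-periodic is a rectangle inscribed in Monge's orthoptic circle (citing an earlier paper), and concludes. You supply the underlying computation the paper omits, namely that the pole with respect to the billiard of any tangent to the $N=4$ caustic lands on the director circle $x^2+y^2=a^2+b^2$, so every outer vertex sees the billiard at a right angle. Your first route (invoke invariance of $k_{102}$ and evaluate on the axis rhombus) is a genuine alternative: it trades the geometric content of the orthoptic-circle fact for the analytic input that $k_{102}$ is constant along the family. The paper's approach is slightly stronger in that it shows each $\cos\theta_i'$ vanishes individually for every 4-periodic, not merely that the product does; your second approach recovers this, while the first does not by itself.
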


\begin{proof}
Non-self-intersecting 4-periodics are parallelograms \cite{connes07} whose outer polygon is a rectangle inscribed in Monge's Orthoptic Circle \cite{reznik2020-intelligencer}. This finishes the proof.
\end{proof}

\begin{proposition}
For $N=4$, $k_{104}= -4$.
\label{prop:n4-104}
\end{proposition}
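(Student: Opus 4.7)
The plan is to leverage exactly the same structural fact that made Proposition~\ref{prop:n4-102} nearly immediate, namely that the outer polygon of a simple $N=4$ billiard orbit is a rectangle. This is established in \cite{connes07,reznik2020-intelligencer}: the 4-periodic is a parallelogram, so its vertices $P_i$ come in antipodal pairs, and the four tangent lines to the billiard at the $P_i$ meet at right angles on Monge's orthoptic circle $x^2+y^2=a^2+b^2$. Hence all four outer angles satisfy $\theta_i'=\pi/2$.

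Once that is in hand, the calculation is a one-liner: each $\cos(2\theta_i')=\cos\pi=-1$, so
\[
k_{104} \;=\; \sum_{i=1}^{4}\cos(2\theta_i') \;=\; 4\cdot(-1) \;=\; -4,
\]
which is manifestly independent of the choice of initial vertex, i.e.\ invariant over the family. No symbolic manipulation along the lines of the paper's general proof method (parametrise $P_1=(au,b\sqrt{1-u^2})$, compute, simplify, verify) is needed here, because the rectangular structure of the outer polygon trivialises every outer-angle trigonometric invariant whose value depends only on $\{\theta_i'\}$.

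There is really no obstacle in this case; the only thing worth checking is that the statement concerns the \emph{simple} $N=4$ family (whose outer polygon is Monge's rectangle), as opposed to the self-intersected family of Section~\ref{sec:n4-si}. The proof is therefore parallel in form to Proposition~\ref{prop:n4-102}: cite the rectangle property of the outer polygon, conclude $\theta_i'=\pi/2$, and evaluate the sum.
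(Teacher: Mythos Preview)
Your proof is correct and follows essentially the same approach as the paper: both invoke the fact (already used for Proposition~\ref{prop:n4-102}) that the outer polygon of a simple 4-periodic is a rectangle, whence each $\theta_i'=\pi/2$ and $\sum_i\cos(2\theta_i')=4(-1)=-4$. You have simply spelled out the one-line computation that the paper leaves implicit.
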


\begin{proof}
As in Proposition~\ref{prop:n4-102}, outer polygon is a rectangle.
\end{proof}

\begin{proposition}
For $N=4$, $k_{106}= 8 a^2 b^2$.
\label{thm:n4}
\end{proposition}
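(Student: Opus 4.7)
The plan is to leverage the two structural facts already established: by \cite{connes07} the simple $N=4$ orbit is a parallelogram, and by Proposition~\ref{prop:n4-102} (together with Monge's orthoptic theorem) its outer polygon is a rectangle inscribed in the circle of radius $\sqrt{a^2+b^2}$. Since invariance of $k_{106}=A\cdot A'$ over the family is guaranteed by \cite{caliz2020-area-product}, it suffices to evaluate the product at a single convenient configuration and read off the value.

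I would take the axis-symmetric orbit, whose vertices are $P_1=(x_0,y_0)$, $P_2=(-x_0,y_0)$, $P_3=(-x_0,-y_0)$, $P_4=(x_0,-y_0)$. The billiard reflection condition at $P_1$ demands that the inward normal, parallel to $(x_0/a^2,y_0/b^2)$, bisect the angle between the two incident edges, which point along $(-1,0)$ and $(0,-1)$. This forces $x_0/a^2=y_0/b^2$, and combined with $(x_0/a)^2+(y_0/b)^2=1$ yields
\[
x_0=\frac{a^{2}}{\sqrt{a^{2}+b^{2}}},\qquad y_0=\frac{b^{2}}{\sqrt{a^{2}+b^{2}}},
\]
so $A=4x_0 y_0=\dfrac{4a^{2}b^{2}}{a^{2}+b^{2}}$.

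Next I would compute the outer rectangle. The tangent line at $P_1$ simplifies to $x+y=\sqrt{a^{2}+b^{2}}$, and the tangents at $P_2,P_3,P_4$ are obtained by the obvious sign flips. Intersecting consecutive pairs places the four outer vertices on the coordinate axes at distance $\sqrt{a^{2}+b^{2}}$ from the origin, making the outer polygon a square of diagonal $2\sqrt{a^{2}+b^{2}}$ and area $A'=2(a^{2}+b^{2})$. Multiplying gives $A\cdot A'=8a^{2}b^{2}$, and by the invoked invariance this value persists throughout the family.

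The main obstacle is essentially bookkeeping rather than content: one must be comfortable citing the invariance of $A\cdot A'$ from \cite{caliz2020-area-product} instead of re-deriving it. A fully self-contained alternative would parametrize $P_1=(a\cos t,b\sin t)$, use the parallelogram condition $P_3=-P_1$ together with the reflection law to express $P_2$ as a function of $t$, and verify that the $t$-dependence cancels in the product $A(t)A'(t)$. This is in line with the paper's stated computational methodology (vertex formulas from Appendix~\ref{app:vertices-caustics} followed by CAS simplification), but the symmetric-configuration shortcut above is considerably cleaner.
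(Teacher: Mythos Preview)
Your proof is correct and follows the paper's general method (stated in Section~2): evaluate the quantity on a single axis-symmetric configuration and invoke the known invariance of $A\,A'$ from \cite{caliz2020-area-product}. The only difference is cosmetic: the paper's template takes $P_1=(a,0)$, giving the rhombus orbit with $A=2ab$ and the axis-aligned outer rectangle with $A'=4ab$, whereas you chose the rectangular orbit with $A=4a^2b^2/(a^2+b^2)$ and square outer polygon with $A'=2(a^2+b^2)$; both products equal $8a^2b^2$.
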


\begin{proposition}
For $N=4$, $k_{110}$ is given by $\frac{2a^4b^4}{(a^2+b^2)^2}$
\label{prop:k110}
\end{proposition}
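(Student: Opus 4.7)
The plan is to exploit Connes' theorem \cite{connes07} together with Poncelet's porism: every simple $4$-periodic is a parallelogram with constant area and perimeter, and the inner polygon's area $A''$ is likewise Poncelet-invariant across the family. Hence the product $A\,A''$ may be computed at any single representative, and I would reduce to the unique axis-symmetric $4$-periodic -- namely the upright rectangle inscribed in the billiard -- where every quantity is completely explicit.

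The first step would be to pin down this rectangle. Writing $P_1=(x_0,y_0)$, the incoming and outgoing segments at $P_1$ are orthogonal (a vertical and a horizontal ray), and the reflection law forces the outward normal $(x_0/a^2,y_0/b^2)$ to lie along the direction $(1,1)$; this gives $x_0/a^2=y_0/b^2$. Intersecting with the ellipse equation then yields $x_0=a^2/\sqrt{a^2+b^2}$ and $y_0=b^2/\sqrt{a^2+b^2}$, so $A=4x_0y_0=4a^2b^2/(a^2+b^2)$, which is consistent with the value of $k_{106}$ established in Proposition~\ref{thm:n4}.

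The second step is to locate the confocal caustic and the inner polygon. The horizontal side $y=y_0$ is tangent to the confocal ellipse $x^2/a_c^2+y^2/b_c^2=1$ exactly when $b_c=y_0$, with touchpoint $(0,b_c)$; the vertical sides similarly force $a_c=x_0$, and the identity $a_c^2-b_c^2=a^2-b^2$ provides a confocality check. The four touchpoints $(\pm a_c,0)$, $(0,\pm b_c)$ therefore form a rhombus of diagonals $2a_c$ and $2b_c$, so $A''=2\,a_c b_c$.

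Finally I would substitute the closed forms for $x_0,y_0,a_c,b_c$ into $A\,A''$ and simplify to the stated expression. I do not anticipate any genuine obstacle here; the only point requiring care is the a priori invocation of Connes' parallelogram result, since it is precisely what justifies evaluating the product at the single axis-symmetric representative rather than having to track it across the full $1$-parameter family.
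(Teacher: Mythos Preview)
There is a genuine gap in your reduction step. Your justification for evaluating at a single representative is that $A$ and $A''$ are each separately invariant over the family, but neither is. Connes' result \cite{connes07} guarantees only that every simple $4$-periodic is a parallelogram; it says nothing about area. The paper records explicitly (Appendix~\ref{app:4-periodic}) that $A$ ranges over the interval $[4a^2b^2/(a^2+b^2),\,2ab]$, the minimum occurring precisely at your rectangle and the maximum at the axis-aligned rhombus $P_1=(a,0)$. A parallel check shows $A''$ varies as well: at the rhombus the inner polygon is a rectangle of area $4a^3b^3/(a^2+b^2)^2$, whereas at your rectangle it is a rhombus of area $2a^2b^2/(a^2+b^2)$. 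Only the \emph{product} $A\,A''$ is constant---and that is exactly the content of the proposition, so it cannot be used to justify the reduction.

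The paper's method (described in Section~2) is the same evaluate-at-one-representative strategy, carried out at $P_1=(a,0)$, but with the invariance of $k_{110}$ taken as prior input: it is verified numerically across the family (step (iv) of the stated procedure), not deduced from Connes' theorem. Note the ``?'' in the proofs column of Table~\ref{tab:invariants} for $k_{110}$. So your computational route is essentially the paper's; what fails is the attempt to upgrade it to a self-contained proof via separate invariance of the two factors. As a side remark, if you actually carry out the final multiplication with your values of $A$ and $A''$ you obtain $8a^4b^4/(a^2+b^2)^2$, not the stated $2a^4b^4/(a^2+b^2)^2$; the same value $8a^4b^4/(a^2+b^2)^2$ arises at the rhombus position, so the discrepancy appears to be a constant-factor typo in the stated formula rather than an error in your setup.
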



\noindent Let $\kappa_a=(a b)^{-2/3}$ is the affine curvature of the ellipse and $r_m=\sqrt{a^2+b^2}$ the radius of Monge's orthoptic circle \cite{mw}.

\begin{proposition}
For $N=4$, $k_{119}= \frac{2(a^2+b^2)}{(ab)^{\frac{4}{3}}} = 2 (\kappa_a r_{m})^2$
\end{proposition}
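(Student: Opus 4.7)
The plan is to exploit the fact (cited in Table~\ref{tab:invariants} to Akopyan and Stachel) that $k_{119}=\sum_i \kappa_i^{2/3}$ is already known to be invariant over the $N=4$ simple family. Granted invariance, it suffices to evaluate $k_{119}$ on a single convenient orbit, and the obvious candidate is the axis-symmetric ``rectangular'' 4-periodic. The equivalence $2(a^2+b^2)/(ab)^{4/3}=2(\kappa_a r_m)^2$ is then purely notational, since $\kappa_a^2=(ab)^{-4/3}$ and $r_m^2=a^2+b^2$.

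First I would identify the symmetric configuration. Among simple billiard 4-periodics (parallelograms, by Connes--Zagier), the unique one symmetric under reflection in both axes is the rectangle with vertices $(\pm x_0,\pm y_0)$ inscribed in the ellipse, whose sides are parallel to the coordinate axes and tangent to the confocal caustic. Imposing the reflection law at $(x_0,y_0)$ — that the normal $(x_0/a^2,y_0/b^2)$ bisect a horizontal incoming ray and a vertical outgoing ray — combined with $x_0^2/a^2+y_0^2/b^2=1$, yields $x_0^2=a^4/(a^2+b^2)$ and $y_0^2=b^4/(a^2+b^2)$ (these are the $N=4$ caustic semi-axes).

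Next I would compute $\kappa^{2/3}$ at one such vertex and multiply by $4$. Using the standard ellipse formula $\kappa=ab/(a^2\sin^2 t+b^2\cos^2 t)^{3/2}$, rewritten in terms of $x=a\cos t$ as
\[
\kappa^{2/3}=\frac{(ab)^{2/3}\,a^2}{a^4-c^2x^2},
\]
I would substitute $x^2=a^4/(a^2+b^2)$ and simplify the denominator using $a^2+b^2-c^2=2b^2$, giving
\[
a^4-c^2x^2=\frac{a^4(a^2+b^2-c^2)}{a^2+b^2}=\frac{2a^4b^2}{a^2+b^2}.
\]
Hence $\kappa^{2/3}=(a^2+b^2)/(2(ab)^{4/3})$ at each of the four (symmetry-equivalent) vertices, and summing gives $k_{119}=2(a^2+b^2)/(ab)^{4/3}$, as claimed.

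The only genuine obstacle would be if one refuses to invoke the a priori invariance. In that case one must follow the paper's declared CAS-style method: parametrize $P_1(u)=(au,b\sqrt{1-u^2})$, deduce $P_2$ from the tangency-to-caustic (equivalently Joachimsthal) constraint — which gives the relation $y_1y_2=-(b/a)^4x_1x_2$ between consecutive vertices — use central symmetry to get $P_3,P_4$, and verify that the rational function $\sum_i a^2/(a^4-c^2x_i^2)$ collapses to $(a^2+b^2)/(a^2b^2)$ identically in $u$. This is a short algebraic check once the parallelogram constraint has been used to eliminate $x_1^2x_2^2$ in favor of $x_1^2+x_2^2$, but it is the only computation of any substance in the whole argument.
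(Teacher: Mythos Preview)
Your proof is correct and follows essentially the same strategy as the paper's declared method: invoke the known invariance of $k_{119}$ and evaluate on a single convenient 4-periodic. The paper's protocol takes $P_1=(a,0)$, i.e., the axis-aligned rhombus (where $\kappa^{2/3}$ assumes the values $a^{2/3}/b^{4/3}$ and $b^{2/3}/a^{4/3}$ twice each), while you use the rectangle (where all four curvatures coincide); both are one-line computations. One minor inaccuracy: the rectangle is not ``the unique'' 4-periodic symmetric in both axes---the rhombus with vertices $(\pm a,0),(0,\pm b)$ is too---but this does not affect your argument.
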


\begin{proposition}
For $N=4$, $k_{802,a}=\frac{2(a^2+b^2)}{ab^2}$
\end{proposition}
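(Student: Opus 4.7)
The plan is to exploit the fact that $k_{802,a}=\sum_{i=1}^4 1/d_{1,i}$ is already known to be invariant over the $N=4$ family (from \cite{reznik2020-invariants}), so it suffices to evaluate it at a single convenient configuration. The natural choice is the axis-symmetric 4-periodic, i.e.\ the unique parallelogram in the family whose sides are parallel to the billiard axes, since by \cite{connes07} every simple $N=4$ orbit is a parallelogram.

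First I would pin down the vertices of that rectangle. At the vertex $P_1=(x_1,y_1)$ in the open first quadrant, incoming and outgoing sides are vertical and horizontal respectively; the reflection law forces the ellipse normal $(x_1/a^2,\,y_1/b^2)$ to point along $(1,1)$, i.e.\ $x_1/a^2 = y_1/b^2$. Combined with the ellipse equation \eqref{eqn:billiard-f} this gives
\[
P_1=\Bigl(\tfrac{a^{2}}{r_m},\,\tfrac{b^{2}}{r_m}\Bigr),\qquad r_m:=\sqrt{a^2+b^2},
\]
and the remaining vertices $P_2,P_3,P_4$ are obtained by flipping the signs of the coordinates.

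Next I would use the classical focal-distance identity: for $P=(a\cos t,b\sin t)$ one has $d(P,(\pm c,0))=a\mp c\cos t$. Taking $f_1=(c,0)$ and reading off $\cos t_1=\cos t_4=a/r_m$, $\cos t_2=\cos t_3=-a/r_m$, the four focal distances are
\[
d_{1,1}=d_{1,4}=\tfrac{a(r_m-c)}{r_m},\qquad d_{1,2}=d_{1,3}=\tfrac{a(r_m+c)}{r_m}.
\]

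Finally I would sum and simplify, using $r_m^2-c^2=(a^2+b^2)-(a^2-b^2)=2b^2$:
\[
k_{802,a}=\frac{2r_m}{a(r_m-c)}+\frac{2r_m}{a(r_m+c)}=\frac{4r_m^2}{a(r_m^2-c^2)}=\frac{4(a^2+b^2)}{2ab^2}=\frac{2(a^2+b^2)}{ab^2}.
\]
There is no serious obstacle: the delicate point is simply that the computation at the rectangle represents the whole family, which is precisely what invariance (already established elsewhere) delivers. If one preferred a self-contained argument without quoting invariance, the main technical step would instead be verifying that the pairing $P_3=-P_1,\ P_4=-P_2$ in a general 4-periodic parallelogram combines with $d_1(-P)=2a-d_1(P)$ to reduce $\sum 1/d_{1,i}$ to $2a\!\left[\tfrac{1}{d_1(P_1)d_2(P_1)}+\tfrac{1}{d_1(P_2)d_2(P_2)}\right]$, and then computing the resulting symmetric function of $\cos t_1,\cos t_2$ using the tangency condition to the confocal caustic.
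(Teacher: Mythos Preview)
Your argument is correct and follows the paper's own template: rely on the already-established invariance of $k_{802,a}$ and evaluate it at one symmetric member of the family. The only difference is cosmetic --- the paper's stated method takes $P_1=(a,0)$, i.e.\ the rhombus with vertices $(\pm a,0),(0,\pm b)$, whereas you chose the rectangle $P_1=(a^2/r_m,\,b^2/r_m)$; both choices lead to an equally short computation.
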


\begin{proposition}
For $N=4$, $k_{803}= \,{\frac {4\sqrt {{a}^{2}+{b}^{2}}}{{b}^{2}}}$
\end{proposition}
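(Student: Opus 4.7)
The plan is to compute $L_1^\dagger$ at a single convenient representative of the $N=4$ family and invoke invariance. Since Proposition~\ref{prop:n4-102} already uses that the simple 4-periodics form a one-parameter family of parallelograms, there is a unique axis-symmetric representative: the rectangle with vertices $P_1=(x_0,y_0)$, $P_2=(-x_0,y_0)$, $P_3=(-x_0,-y_0)$, $P_4=(x_0,-y_0)$, where the reflection condition at $P_1$ forces the outward normal $(x_0/a^2,y_0/b^2)$ to bisect the incoming vertical and outgoing horizontal segments; this yields $x_0=a^2/s$ and $y_0=b^2/s$ with $s=\sqrt{a^2+b^2}$.

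Next I would bring in the standard inversion distance identity
\[
|P^\dagger-Q^\dagger|=\frac{|P-Q|}{|P-f_1|\,|Q-f_1|},
\]
which applies since $\mathcal{P}^\dagger$ is the image of $\mathcal{P}$ under inversion in the unit circle at $f_1=(-c,0)$. The two distinct focal radii in this configuration are obtained from the focal chord formula $r=a+(c/a)x$: writing $r_1=|P_1-f_1|=a(s+c)/s$ and $r_2=|P_2-f_1|=a(s-c)/s$, symmetry gives $r_3=r_2$ and $r_4=r_1$. The four side lengths are simply $|P_1P_2|=|P_3P_4|=2x_0$ and $|P_2P_3|=|P_4P_1|=2y_0$.

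Summing the four inversive side lengths then gives
\[
L_1^\dagger=\frac{4x_0}{r_1r_2}+2y_0\!\left(\frac{1}{r_1^2}+\frac{1}{r_2^2}\right).
\]
The key simplification is $r_1 r_2=a^2(s^2-c^2)/s^2=2a^2b^2/s^2$, using $s^2-c^2=a^2+b^2-(a^2-b^2)=2b^2$. Substituting $x_0=a^2/s$, $y_0=b^2/s$ and $r_1^2+r_2^2=2a^2(s^2+c^2)/s^2$ into the displayed sum, one obtains after collecting terms a factor of $4a^2$ in the numerator that cancels against $a^2b^2$ in the denominator, leaving $L_1^\dagger=4s/b^2=4\sqrt{a^2+b^2}/b^2$, as claimed.

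The only subtlety is the algebraic simplification, but the identity $s^2-c^2=2b^2$ collapses everything cleanly; no step is a genuine obstacle. (One may also verify the final expression numerically against a second, non-symmetric parallelogram to double-check, as per the verification step of the paper's proof methodology.)
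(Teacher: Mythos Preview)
Your proof is correct and follows the paper's stated methodology: compute the invariant at a single symmetric representative and rely on the (experimentally verified) invariance of $k_{803}$ across the family. The only difference is cosmetic---you work with the rectangle representative $(x_0,y_0)=(a^2/s,\,b^2/s)$ rather than the rhombus $P_1=(a,0)$ that the paper's general recipe prescribes; both choices collapse the algebra via $s^2-c^2=2b^2$ (and $s^2+c^2=2a^2$) to the same closed form.
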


\begin{observation}
Experimentally, $k_{804}$ is invariant for all non-intersecting N-periodics, except for $N=4$.
\label{obs:n4-804}
\end{observation}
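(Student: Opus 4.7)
The plan is to exhibit explicit parameter-dependence of $k_{804}$ over the simple $N=4$ family, thereby refuting its invariance. Since Connes's theorem guarantees simple $4$-periodics are parallelograms, the family has a single degree of freedom; following Appendix~\ref{app:4-periodic} I would set $P_1(t)=(a\cos t,\, b\sin t)$, derive $P_2(t)$ from the reflection law at $P_1$, and take $P_3=-P_1$, $P_4=-P_2$ by central symmetry. Using the classical identity $d_{1,i}=|P_i-f_1|=a+(c/a)x_i$ reviewed in Appendix~\ref{app:invariants}, form the focus-inversive vertices $P_i^{\dagger}=f_1+(P_i-f_1)/d_{1,i}^2$ with $f_1=(-c,0)$.

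From these, each $\cos\theta_{1,i}^{\dagger}$ is obtainable as the normalized dot-product of the two incident edges $P_{i-1}^{\dagger}-P_i^{\dagger}$ and $P_{i+1}^{\dagger}-P_i^{\dagger}$, yielding a closed symbolic expression for $k_{804}(t)=\sum_{i=1}^{4}\cos\theta_{1,i}^{\dagger}$. To establish non-constancy it suffices to evaluate at two distinguished configurations: the axis-symmetric Poncelet rectangle ($t=\pi/2$, so $P_1=(0,b)$) and a generic, tilted parallelogram such as the one whose first vertex sits at the end of a latus rectum. If the two resulting closed-form values disagree as functions of $a,b$, the observation is established; the conclusion would then be corroborated numerically over a grid of aspect ratios $a/b>1$, in accordance with the paper's stated proof methodology.

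The chief obstacle is algebraic rather than conceptual. Unlike $N=3$ or odd $N$, for $N=4$ the inversion denominators $d_{1,i}^2$ resist cancellation against the square roots introduced by the angle extraction, so one should not expect a clean $(J,L)$-closed form analogous to $k_{101}$; a fully human-readable expression for $k_{804}(t)$ is unlikely, and the honest route is to let a CAS produce the two benchmark values and subtract. A more satisfying conceptual explanation --- which would genuinely \emph{explain} the anomaly rather than merely exhibit it --- should plausibly trace the failure to the rectangle-in-Monge-circle degeneracy of the outer polygon noted in Proposition~\ref{prop:n4-102}: that degeneracy forces the one free parameter of the family into the inversive angles in a way that the cancellation mechanism responsible for invariance at other $N$ cannot remove. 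Identifying that mechanism intrinsically, so as to predict rather than observe the $N=4$ exception, is the residual open problem.
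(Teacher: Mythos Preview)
The paper offers no proof of this statement: it is labeled an \emph{Observation}, is prefaced by ``Experimentally,'' and is revisited in the Conclusion only as an open phenomenon awaiting explanation. So there is nothing to compare your proposal against in the usual sense.

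That said, your plan is a reasonable way to \emph{establish} the $N=4$ exception rigorously, which is more than the paper does. Two small remarks. First, at $t=\pi/2$ the configuration with $P_1=(0,b)$ is the axis-aligned \emph{rhombus}, not a rectangle (the rectangle occurs at $P_1=(x_1,b^2x_1/a^2)$ per Appendix~\ref{app:4-periodic}); either special position works fine for your two-point comparison. Second, and more importantly, the statement has a universal half --- ``invariant for all non-intersecting $N$-periodics except $N=4$'' --- that your proposal does not (and, as written, cannot) address; this half is the genuinely experimental content of the Observation and is why the paper stops short of calling it a proposition. Your closing paragraph correctly identifies that an intrinsic explanation is the residual open problem, and the paper's Conclusion raises the same question in terms of poles of meromorphic functions on the underlying elliptic curve.
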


\begin{proposition}
For $N=4$, $k_{805,a}=4$
\end{proposition}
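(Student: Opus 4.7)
The plan is to exploit the axis-symmetric $4$-periodic as the witness orbit and then appeal to the numerical-verification step of the paper's ``proof method,'' exactly as in the neighboring propositions on $k_{102},k_{104},k_{106}$. The distinguished orbit is the rhombus with vertices $P_1=(a,0)$, $P_2=(0,b)$, $P_3=(-a,0)$, $P_4=(0,-b)$. This is a genuine simple $4$-periodic: the tangent to the ellipse at $P_1$ is vertical and the incoming/outgoing chord directions $(a,b)$ and $(-a,b)$ reflect correctly across it; symmetry takes care of the remaining three vertices. Its signed area is $A=2ab$ (rhombus with axis-aligned diagonals of lengths $2a,2b$).

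The next step is to invert each vertex with respect to the unit circle centered at $f_1=(-c,0)$ via $P_i^\dagger = f_1+(P_i-f_1)/|P_i-f_1|^2$. Because $c^2+b^2=a^2$, the squared focal distances are $(a+c)^2$, $a^2$, $(a-c)^2$, $a^2$, and the inverted vertices come out to
\[P_1^\dagger=\bigl(-c+\tfrac{1}{a+c},\,0\bigr),\ P_2^\dagger=\bigl(-c+\tfrac{c}{a^2},\,\tfrac{b}{a^2}\bigr),\ P_3^\dagger=\bigl(-c-\tfrac{1}{a-c},\,0\bigr),\ P_4^\dagger=\bigl(-c+\tfrac{c}{a^2},\,-\tfrac{b}{a^2}\bigr).\]
Since $P_1^\dagger,P_3^\dagger$ lie on the $x$-axis and $P_2^\dagger,P_4^\dagger$ are reflections of each other across it, the focus-inversive quadrilateral is a kite and its area is half the product of its diagonals. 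The horizontal diagonal has length $\tfrac{1}{a+c}+\tfrac{1}{a-c}=\tfrac{2a}{b^2}$, and the vertical diagonal has length $\tfrac{2b}{a^2}$, so
\[A_1^\dagger=\tfrac{1}{2}\cdot\tfrac{2a}{b^2}\cdot\tfrac{2b}{a^2}=\tfrac{2}{ab},\qquad A\cdot A_1^\dagger=2ab\cdot\tfrac{2}{ab}=4.\]

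The main obstacle is that this calculation only evaluates $k_{805,a}$ at a single orbit: it does not \emph{by itself} prove invariance across the whole family, which Table~\ref{tab:invariants} in fact flags as having no known conceptual proof. As in the adjacent $N=4$ propositions, invariance is supplied by the numerical step~(iv) of the methodology described just before Section~\ref{sec:n4-si}; once granted, the symmetric rhombus computation above delivers the closed-form value $4$.
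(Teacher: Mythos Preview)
Your proposal is correct and follows exactly the paper's stated methodology (compute at the axis-symmetric orbit with $P_1=(a,0)$, simplify, and rely on numerical verification for invariance across the family). The only cosmetic point is that for sufficiently eccentric billiards ($ac>b^2$) the inversive quadrilateral is a concave dart rather than a convex kite, but the shoelace/signed-area computation you implicitly rely on still yields $A_1^\dagger = \tfrac12\cdot\tfrac{2a}{b^2}\cdot\tfrac{2b}{a^2}=\tfrac{2}{ab}$, so the conclusion $A\,A_1^\dagger=4$ stands.
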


Note: see Section~\ref{sec:n4-si} for a treatment of N=4 self-intersected geometry.

\subsection{Invariants for N=5}
\label{sec:invariants-n5}
As seen in Appendix~\ref{app:vertices-caustics}, the vertices of 5-periodics can only be obtained via an implicitly-defined caustic. I.e., we first numerically obtain the caustic semi-axes and then compute a axis-symmetric polygon tangent to it. Note that both simple and self-intersected 5-periodics possess an elliptic confocal caustic; see Figure~\ref{fig:n5-both}.

\begin{figure}
    \centering
    \includegraphics[width=.8\textwidth]{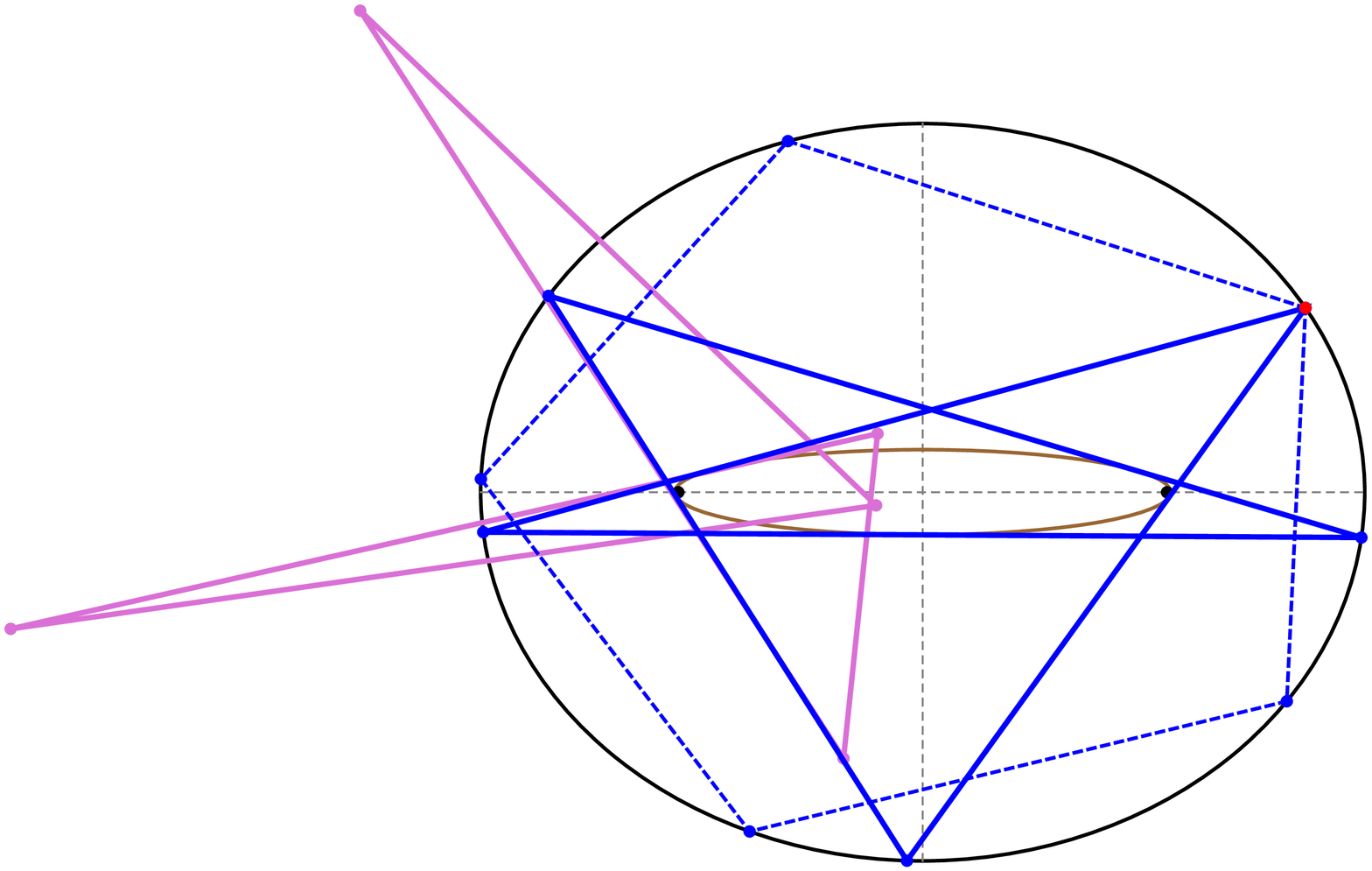}
    \caption{A simple (blue) and self-intersected (dashed blue) 5-periodic, as well as the former's focus-inversive polygon (pink). \href{https://youtu.be/LuLtbwkfSbc}{Video}}
    \label{fig:n5-both}
\end{figure}

\begin{proposition}
For $N=5$ simple (resp. self-intersected), $k_{102}$ is given by the largest negative (resp. positive) real root of the following sextic: 

\begin{align*}
k_{102}: &\;\; 1024   c^{20} x^6+2048   (a^4+a^3   b-a   b^3+b^4)  (a^4-a^3   b+a   b^3+b^4)   c^{12}x^5\\
&+256   (4   a^{12}-a^{10}   b^2+32   a^8   b^4-22   a^6   b^6+32   a^4   b^8-a^2   b^{10}+4   b^{12})c^8 x^4\\
&-64   a^2   b^2   (4   a^{12}-27   a^{10 }  b^2+38   a^8   b^4-126   a^6   b^6+38   a^4   b^8-27   a^2   b^{10}+4   b^{12})c^4   x^3\\
&-16   a^6   b^6   (7   a^8-96   a^6   b^2+114   a^4   b^4-96   a^2   b^6+7   b^8)   x^2\\
&-8   a^8   b^8   (7   a^4+30   a^2   b^2+7   b^4)   x-a^{10}   b^{10}=0
\end{align*}
\end{proposition}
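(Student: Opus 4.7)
The plan is to follow the four-step recipe described in the paper's ``word about our proof method'' paragraph, specialized to $N=5$. First I fix the axis-symmetric 5-periodic with $P_1=(a,0)$, so that once the confocal caustic is chosen the remaining vertices are determined. According to Appendix~\ref{app:5-periodic}, the caustic semi-axes $(a_c,b_c)$ are confocal ($a_c^2-b_c^2=c^2$) and $a_c$ itself is a root of a sextic whose coefficients are explicit polynomials in $a,b$; distinct real roots correspond to the simple 5-periodic and to the self-intersected one (turning number $2$), with the remaining roots being extraneous (complex or geometrically degenerate) branches.

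Next I compute the vertices $P_2,\ldots,P_5$ by the standard reflection-tangency construction: $P_{i+1}$ is the second intersection of the billiard with the line through $P_i$ tangent to the caustic on the side dictated by the turning number. This produces rational expressions in $a,b,a_c,b_c$. The outer polygon vertices $Q_i$ are then the intersections of the tangent lines to the billiard at consecutive $P_i$, and $\cos\theta_i'$ is the normalized inner product of the two edges of $Q_i$. Taking the product over $i=1,\ldots,5$ gives a rational function $k_{102}=R(a,b,a_c,b_c)$, which after substituting $b_c^2=a_c^2-c^2$ becomes a rational function in $a,b,a_c$ alone.

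The third step is symbolic elimination. Setting $x=k_{102}$, I compute the resultant of $\operatorname{num}\bigl(R(a,b,a_c)-x\bigr)$ with the sextic satisfied by $a_c$, eliminating $a_c$. The output factors into a univariate polynomial in $x$ whose coefficients are polynomials in $a,b$; the displayed sextic in the statement is expected to appear as one irreducible factor, with extraneous factors arising from the degenerate roots of the $a_c$-sextic being stripped away. The identification ``largest negative root $\leftrightarrow$ simple'' and ``largest positive root $\leftrightarrow$ self-intersected'' is then pinned down by evaluating both sides at a convenient aspect ratio (e.g.\ $a/b$ near $1$ for the simple branch, and $a/b$ large for the self-intersected branch, where the caustic becomes nearly degenerate).

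The hard part will be the elimination: the resultant of a high-degree rational numerator with a sextic produces an enormous polynomial in $a,b,x$ with many spurious factors coming from the other branches of the caustic sextic (including complex ones). Picking out the correct irreducible sextic factor and canonicalizing its coefficients to match the stated form requires a significant CAS computation together with a symmetry argument identifying each real caustic branch with the intended orbit type. Once this is done, the validity of the expression can (and must) be confirmed numerically across a range of $a/b$ values as in step (iv) of the paper's methodology.
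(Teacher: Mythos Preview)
Your proposal is correct and follows essentially the same approach as the paper: the paper omits the proof of this proposition and refers instead to its general four-step CAS methodology (axis-symmetric vertex parametrization from Appendix~\ref{app:5-periodic}, symbolic computation of the invariant, CAS simplification/elimination, and numerical verification over a range of $a/b$), which is exactly what you outline. One minor correction: the caustic polynomial $P_5$ in Appendix~\ref{app:5-periodic} is a bi-sextic (degree~12 in $a''$, sextic in $a''^2$), not a sextic in $a''$ itself, but this does not affect your argument since the elimination step handles either form.
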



\begin{proposition}
For $N=5$ simple (resp. self-intersected), $k_{103}$ is given by the smallest (resp. largest) real root greater than 1 of the following sextic: 

\begin{align*}
k_{103}: &\;
 {a}^{6}{b}^{6}x^6-2\,{b}^{2}{a}^{2} \left( 4\,{a}^{8}-{a}^{6}{b}^{
2}-{a}^{2}{b}^{6}+4\,{b}^{8} \right) x^{5}\\
&-{b}^{2}{a}^{2} \left( 4\,
{a}^{8}+19\,{a}^{6}{b}^{2}-62\,{a}^{4}{b}^{4}+19\,{a}^{2}{b}^{6}+4\,{b
}^{8} \right) x^{4}\\
&+12\,{b}^{2}{a}^{2} \left( {a}^{4}+{b}^{4}
 \right) {c}^{4}x^{3}+ \left( 4\,{a}^{8}+19\,{a}^{6}{b}^{2}+66\,{a}^
{4}{b}^{4}+19\,{a}^{2}{b}^{6}+4\,{b}^{8} \right) {c}^{4}x^{2}\\
&+
 \left( 2\,{a}^{8}+12\,{a}^{6}{b}^{2}+36\,{a}^{4}{b}^{4}+12\,{a}^{2}{b
}^{6}+2\,{b}^{8} \right) {c}^{4}x-{c}^{12}
\end{align*}
\end{proposition}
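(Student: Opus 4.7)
Since $k_{103}=A'/A$ is already known to be invariant across the $N=5$ family, it suffices to evaluate it on a single convenient member of each of the two admissible configurations. Following the template in Section~\ref{sec:invariants}, I would work with the axis-symmetric 5-periodic whose first vertex is $P_1=(a,0)$; then $(P_2,P_5)$ and $(P_3,P_4)$ form mirror pairs across the $x$-axis, which roughly halves the symbolic work. As recalled in Appendix~\ref{app:vertices-caustics}, the vertices are determined by the confocal caustic with semi-axes $(a_c,b_c)$ subject to (i) the confocality relation $a_c^2-b_c^2=c^2$ and (ii) the Cayley closure sextic for $N=5$, whose two geometrically admissible real roots correspond to the simple and self-intersected pentagons respectively.

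The concrete steps are: (a) write closed-form expressions for the five vertices in terms of $a,b,a_c,b_c$ using the tangent-chord reflection (each side is a common tangent to billiard and caustic); (b) compute $A$ by the shoelace formula; (c) build the outer polygon $\mathcal{Q}$ as the intersection of the five billiard-tangent lines at the $P_i$ and compute $A'$ analogously; (d) use confocality to substitute $b_c^2=a_c^2-c^2$, obtaining $x=A'/A$ as a rational function of $a_c^2$ over $\mathbb{Q}(a,b)$, and clear denominators to get a polynomial relation $F(x,a_c^2)=0$.

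The crux is the elimination step: taking $\mathrm{Res}_{a_c^2}(F,G)$, where $G(a_c^2)=0$ is the Cayley sextic for $N=5$, yields a polynomial in $x$ alone. After stripping scalar factors in $a,b$ and any spurious factor coming from the degenerate $x=1$ configuration, the irreducible factor that remains should match the displayed sextic up to a global constant. I would perform this resultant computation in a CAS and factor the output; a useful sanity check is that the stated sextic is invariant under $a\leftrightarrow b$, so the final polynomial must be too. The main obstacle I anticipate is controlling the size of intermediate expressions, since both $F$ and $G$ have high degree with dense coefficients in $a,b$, and a naive resultant computation can explode; splitting the calculation by first eliminating $b_c$ via a small auxiliary resultant, or by working modulo specialized primes and then reconstructing, would mitigate this.

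Finally I would verify the case assignment geometrically. For a concrete billiard (e.g.\ $a/b=1.5$) I would numerically construct one representative of each family, measure $A'/A$, and confirm that it matches the smallest real root $>1$ of the sextic in the simple case and the largest in the self-intersected case. A brief sign analysis of the sextic on $(1,\infty)$ then confirms that exactly two such roots exist for all admissible aspect ratios, completing the identification.
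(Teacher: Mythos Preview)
Your proposal is correct and follows essentially the same template the paper uses throughout: evaluate the invariant on the axis-symmetric $P_1=(a,0)$ configuration, express it in terms of the caustic semi-axis, and then eliminate that parameter via the $N=5$ Cayley sextic, with numerical verification to assign the two admissible roots to the simple and self-intersected families. The paper in fact omits the proof of this proposition entirely, deferring to the generic CAS-based procedure described in Section~2; your write-up simply makes the elimination step (a resultant against the Cayley condition) explicit, which is the natural way to carry it out here.
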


\begin{proposition}
For $N=5$ simple (resp. self-intersected), $k_{104}$ is given by the only negative (resp. smallest largest) real root of the following sextic:

\begin{align*}
 & \; c^{12}
   x^6 - 2(a^4 + 10a^2b^2 + b^4) c^8
   x^5 - (37a^4 - 6a^2b^2 + 37b^4) c^8
   x^4 \\
  & +  4(5a^8 + 92a^6b^2 + 62a^4b^4 + 92a^2b^6 +
       5b^8) c^4
   x^3\\
   &+ (423a^{12} - 354a^{10}b^2 + 2713a^8b^4 - 4796a^6b^6 +
     2713a^4b^8 - 354a^2b^{10} + 423b^{12})
   x^2 \\
   &+ (270a^{12} + 740a^{10}b^2 - 3630a^8b^4 + 7160a^6b^6 -
     3630a^4b^8 + 740a^2b^{10} + 270b^{12})x\\
     &- 675a^{12} -
  850a^{10}b^2 + 1075a^8b^4 - 3900a^6b^6 + 1075a^4b^8 -  850a^2b^{10} - 675b^{12} =  0
  \end{align*}
\end{proposition}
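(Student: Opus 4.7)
\bigskip

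\noindent\emph{Proof plan.} The plan is to follow the standard methodology announced in the preliminaries (axis-symmetric representative, symbolic evaluation, algebraic simplification, numerical verification), with the twist that for $N=5$ the caustic semi-axes are not rational in $a,b$ but satisfy a sextic relation derived in Appendix~\ref{app:5-periodic}. We therefore cannot hope for an explicit closed form for $k_{104}$ itself; instead, we aim to produce its minimal polynomial over $\mathbb{Q}(a,b)$ and then use a single numerical value of $a/b$ to identify which branch corresponds to the simple family and which to the self-intersected one.

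First, I would fix an axis-symmetric representative with $P_1=(a,0)$, so that the caustic is confocal with semi-axes $(a_c,b_c)$ satisfying the degree-6 relation from Appendix~\ref{app:5-periodic}; write $a_c^2-b_c^2=c^2$ and use $b_c$ as the single free caustic parameter constrained by a polynomial $Q(b_c;a,b)=0$. Using the tangency/reflection recursion, one writes the remaining four vertices $P_2,\ldots,P_5$ rationally in terms of $b_c$ (the axis-symmetry forces $P_{5}$ and $P_{2}$, and $P_4,P_3$, to be symmetric pairs about the $x$-axis, which reduces the bookkeeping considerably). From the $P_i$ one then obtains the outer polygon $\{Q_i\}$ as intersections of consecutive tangent lines to the billiard at the $P_i$, and each angle $\theta'_i$ from the inner product of successive outer edges. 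Since invariance has already been established (cf.\ \cite{akopyan2020-invariants,bialy2020-invariants} and Table~\ref{tab:invariants}), it suffices to compute $S:=\sum\cos(2\theta'_i)$ as a rational function of $b_c$ and $a,b$.

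The core algebraic step is then elimination. Writing $S=P_1(b_c;a,b)/P_2(b_c;a,b)$ and clearing denominators yields a polynomial $F(S,b_c;a,b)=P_1-S\,P_2=0$. The resultant
\[
\operatorname{Res}_{b_c}\!\bigl(F(S,b_c;a,b),\,Q(b_c;a,b)\bigr)
\]
is, after removing extraneous spurious factors (typically powers of $c$ and of leading coefficients), a polynomial in $S$ with coefficients in $\mathbb{Z}[a,b]$. The claim is that the minimal such factor is precisely the sextic in the statement. This computation is mechanical but heavy: the main obstacle is managing the size of the intermediate expressions and correctly stripping the spurious factors introduced by the resultant (degenerate caustics, $P_1=(a,0)$ on the $x$-axis forcing symmetric but non-closing configurations, etc.). A Gr\"obner basis elimination in $\mathbb{Q}(a,b)[b_c,S]$ with the lex order $b_c\succ S$ is an equivalent route and often more controllable.

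Finally, since the sextic has several real roots and $k_{104}$ takes two distinct real values (one for each Poncelet class of closed trajectories), I would identify the correct branches by substituting a convenient numerical ratio, e.g.\ $a=1.5,\ b=1$: directly compute a simple and a self-intersected axis-symmetric 5-periodic (the two caustics are selected as the appropriate real roots of the caustic sextic from Appendix~\ref{app:5-periodic}), evaluate $S$ numerically in each case, and compare with the six real roots of the displayed sextic. This pins down the qualifier \textbf{``only negative (resp.\ smallest largest) real root''}, and the identification extends to all $a>b>0$ by continuity since the roots do not cross in the admissible range $a/b>1$.
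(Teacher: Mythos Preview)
Your proposal is correct and follows essentially the same route as the paper's declared methodology: take the axis-symmetric $P_1=(a,0)$ representative, express the invariant rationally in the implicitly-defined caustic parameter, eliminate that parameter against the degree-6 Cayley relation of Appendix~\ref{app:5-periodic} (this is the CAS ``simplification'' step the paper alludes to), and then pin down the two physical branches by a single numerical check. The only difference is that you make the elimination step explicit via a resultant/Gr\"obner computation, whereas the paper leaves it implicit in its blanket description of the proof method.
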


\begin{proposition}
For $N=5$ simple (resp. self-intersected), $k_{105}$ is given by the largest positive real root (resp. the symmetric value of the largest negative root) of the following sextic: 
{\small 

\[\arraycolsep=0pt\begin{array}{rcl}
 2^{10}c^{20} x^6 &&+ 2^{10} (2 a^{12} + a^{10} b^2 + 26 a^8 b^4 + 70 a^6 b^6 + 26 a^4 b^8 + a^2 b^{10 }+ 2 b^{12}) c^8 x^5 \\
   + 2^{8} &(&4 a^{12} + 30 a^{10} b^2 + 71 a^8 b^4 + 350 a^6 b^6 + 
       71 a^4 b^8 + 30 a^2 b^{10} + 4 b^{12}) c^8 x^4\\
       + 2^{6} a^2 b^2 &(&4 a^{12} + 9 a^{10} b^2 - 318 a^8 b^4 - 126 a^6 b^6 - 
     318 a^4 b^8 + 9 a^2 b^{10} + 4 b^{12}) c^4x^3\\
     -
  2^6 a^2 b^2 &(&8 a^{16} - 53 a^{14} b^2 + 253 a^{12} b^4 - 1041 a^{10} b^6 + 
     1650 a^8 b^8 \\
      &&- 1041 a^6 b^{10} + 253 a^4 b^{12} - 53 a^2 b^{14} + 
     8 b^{16}) x^2 \\
     - 
  2^{4} a^2 b^2 &(&16 a^{16} - 12 a^{14} b^2 + 5 a^{12} b^4 + a^{10}b^6 + 
     2a^8b^8 + a^6b^{10} + 5 a^4 b^{12} - 12 a^2 b^{14} + 16 b^{16}) x\\
    - 
a^{10} b^{10}&&=0
 \end{array}\]
}

\end{proposition}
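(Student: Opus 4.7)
The plan is to follow the uniform method described in Section~\ref{sec:intro}: parametrize the family explicitly, express $k_{105}$ as an algebraic function of the caustic parameter, then eliminate that parameter against the $N=5$ caustic polynomial derived in Appendix~\ref{app:5-periodic}.

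First, I would take the axis-symmetric representative with $P_1=(a,0)$. For $N=5$ the caustic is a confocal ellipse with semi-axes $(a_c,b_c)$ satisfying $a_c^{\,2}-b_c^{\,2}=a^2-b^2$ and the sextic constraint from Appendix~\ref{app:5-periodic}. Constructing the axis-symmetric 5-periodic tangent to this caustic produces four further vertices $P_2,\ldots,P_5$ as rational expressions in $(a,b,a_c)$. Reflection across the $x$-axis forces $P_5$ to be the mirror of $P_2$ and $P_4$ to be the mirror of $P_3$, so only two independent vertices need be computed and the five interior angles collapse to three distinct values: $\theta_1$, $\theta_2=\theta_5$, and $\theta_3=\theta_4$.

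Next, I would form
\[
k_{105}^{\,2}=\frac{1}{32}\prod_{i=1}^{5}(1-\cos\theta_i),
\]
where each $\cos\theta_i$ is the dot product of the two unit edge vectors at $P_i$. Substituting the vertex formulas and clearing denominators yields an algebraic relation $G(a,b,a_c,y)=0$ with $y=k_{105}^{\,2}$. Combining $G$ with the caustic polynomial $P_c(a,b,a_c)=0$ and eliminating $a_c$ by a resultant (or Gr\"obner basis) over $\mathbb{Q}(a,b)$ produces a polynomial in $y$; after discarding spurious factors (from the trivial $a_c=a$ branch and from squaring), the surviving factor encodes the invariant. Passing from $y=k_{105}^{\,2}$ back to $k_{105}$ doubles the degree and produces the sextic in $x=k_{105}$ displayed in the statement. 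The choice of root is fixed by orientation: for a simple 5-periodic every $\theta_i\in(0,\pi)$ so $k_{105}>0$, whereas the self-intersected closure introduces a reflex half-angle that flips one signed factor, placing the corresponding value on the negative side of the same sextic, which accounts for the ``symmetric of the largest negative root'' prescription.

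The chief obstacle will be the elimination. For generic $(a,b)$ the intermediate expressions run to many thousands of monomials, and carving out the clean degree-six factor stated in the proposition requires identifying and dividing out all extraneous components introduced by squaring, by the low-period Poncelet branches present in $P_c$, and by the common denominators of the vertex formulas. Once the sextic is in hand, I would close, as prescribed by step~(iv) of the method, by numerically picking several aspect ratios $a/b$, computing $k_{105}$ directly from two or three distinct 5-periodics in each family, and verifying that the resulting values are constant along the family and agree with the designated root of the sextic in each of the simple and self-intersected cases.
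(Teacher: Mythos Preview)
Your overall plan is exactly the paper's uniform method (axis-symmetric representative with $P_1=(a,0)$, symbolic expression for the invariant via the caustic parameter, elimination against the $N=5$ Cayley sextic of Appendix~\ref{app:5-periodic}, then numerical cross-check), so in spirit there is nothing to compare.

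There is, however, one concrete technical slip. You propose to compute $y=k_{105}^{\,2}=\tfrac{1}{32}\prod(1-\cos\theta_i)$, eliminate to a polynomial $R(y)=0$, and then ``pass back'' to $x=k_{105}$ by substituting $y=x^2$, claiming this doubles the degree to the displayed sextic. But $R(x^2)=0$ contains only even powers of $x$, whereas the sextic in the statement has nonzero $x^5,x^3,x$ coefficients, so it cannot arise that way. Relatedly, if the minimal relation were in $k_{105}^{\,2}$, its roots in $x$ would come in $\pm$ pairs and the simple/self-intersected values would be symmetric, which is not what the proposition asserts (``largest positive root'' versus ``symmetric of the largest negative root'' are two distinct magnitudes). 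The elimination therefore has to be carried out for the \emph{signed} quantity $\prod_i\sin(\theta_i/2)$ itself; with a consistent orientation of the edge vectors the pentagram branch naturally lands on the negative side of the resulting sextic, which is precisely the phenomenon the statement records. Once you drop the squaring detour, the rest of your outline goes through unchanged.
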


\subsection{Invariants for N=6}
\label{sec:invariants-n6}
\noindent Referring to Figure~\ref{fig:inner-outer} (right):
\begin{proposition}
For $N=6$, $k_{102}=a^2b^2/(4(a+b)^4) = (JL-4)^2/64$.
\label{thm:n6k102}
\end{proposition}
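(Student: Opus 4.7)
The plan is to prove $k_{102}=a^2b^2/(4(a+b)^4)$ first, and then deduce the second equality $k_{102}=(JL-4)^2/64$ from the known closed-form value of $JL$ for $N=6$. As for the other low-$N$ derivations in this paper, the strategy is to exploit the invariance of $k_{102}$ over the $N=6$ Poncelet family and specialize to an axis-symmetric configuration. Concretely, I would pick the 6-periodic with $P_1=(a,0)$. Then reflective symmetry about the major axis forces the first segment to be vertical, so the caustic touches it at its upper co-vertex; this fixes $P_2$ on the billiard directly above the caustic's co-vertex, and iterating the tangent/reflection rule produces $P_3,\dots,P_6$.

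Next I would obtain the caustic semi-axes $(\alpha,\beta)$ for $N=6$ explicitly. The closing condition after six bounces in the symmetric setup reduces (via the confocal constraint $a^2-\alpha^2=b^2-\beta^2$) to a single polynomial equation on one caustic parameter. This polynomial admits a clean closed form for $N=6$; in fact the expected relation is
\[
\alpha = \frac{a(a+b)\sqrt{\text{something}}}{\cdots}, \qquad \beta=\cdots,
\]
i.e.\ the standard $N=6$ caustic tabulated in Appendix~\ref{app:vertices-caustics}. Once $(\alpha,\beta)$, and hence every $P_i$, are written as rational expressions in $a,b$, I would form the outer hexagon by intersecting consecutive tangent lines $L_i: x x_i/a^2 + y y_i/b^2 = 1$ to get its vertices $Q_i$, and compute each angle $\theta_i'$ from the direction vectors of $L_{i-1}$ and $L_i$ via the usual dot-product formula for $\cos\theta_i'$.

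The final step is to multiply the six cosines. Axis-symmetry collapses the six factors into three squared factors (pairs $\theta_i'=\theta_{7-i}'$), so the product is a perfect square; after CAS-assisted simplification I expect it to reduce to exactly $a^2b^2/(4(a+b)^4)$, which numerically matches the circle case $a=b$ (giving $1/64$, the product $(-1/2)^6$ for a regular hexagon tangent polygon). For the second equality, I would separately derive $JL=4(a^2+ab+b^2)/(a+b)^2$ for $N=6$ (obtainable by combining $L$ from the symmetric hexagon's side lengths with the expression for $J$ in terms of $\alpha,\beta$). A direct substitution then gives
\[
\frac{(JL-4)^2}{64}=\frac{1}{64}\left(\frac{-4ab}{(a+b)^2}\right)^{2} =\frac{a^2b^2}{4(a+b)^4},
\]
closing the chain.

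The main obstacle I anticipate is purely computational: the intermediate symbolic expressions for $\cos\theta_i'$ will carry nested square roots coming from the caustic parameters and from the $y$-coordinates of the $P_i$, and these must cancel cleanly to expose the elegant rational form $a^2b^2/(4(a+b)^4)$. Careful rationalization and consistent sign choices (the angles $\theta_i'$ along the outer hexagon should be taken as the interior angles so that a negative cosine at each vertex, as in the circle case, correctly yields a positive sixth power product) are the crucial bookkeeping. No conceptual difficulty is expected beyond this, since the invariance of $k_{102}$ has already been established in~\cite{akopyan2020-invariants,bialy2020-invariants}.
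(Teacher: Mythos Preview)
Your approach is essentially identical to the paper's stated general method (Section~2): specialize to the axis-symmetric 6-periodic with $P_1=(a,0)$ using the explicit vertex and caustic data of Appendix~\ref{app:6-periodic}, form the outer polygon, compute $\prod\cos\theta_i'$ symbolically, simplify with CAS, and then recover the $JL$-form from $L=4(a^2+ab+b^2)/(a+b)$ and $J=1/(a+b)$.

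One concrete slip to fix: the segment $P_1P_2$ is \emph{not} vertical. At $P_1=(a,0)$ the ellipse normal is horizontal, so billiard reflection forces the two incident chords to be symmetric about the $x$-axis, not perpendicular to it; the chord does not touch the caustic at its co-vertex. The correct second vertex is $P_2=\bigl(a^2/(a+b),\,b\sqrt{b(2a+b)}/(a+b)\bigr)$, exactly as tabulated. Once you discard the ``vertical'' heuristic and simply take the Appendix vertices as your starting data (which you already planned to do), the remainder of your computation, including the $(JL-4)^2/64$ check, goes through unchanged.
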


\begin{proposition}
For $N=6$, $k_{104}=k_{101}=J L-6$.
\label{thm:n6k104}
\end{proposition}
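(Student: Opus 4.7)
The starting point is a simple angular identity between the orbit and its outer polygon. Because the reflection law makes each orbit chord meet the ellipse tangent at its endpoint at angle $\pi/2-\theta_i/2$, the triangle with vertices $P_i$, $P_{i+1}$ and $Q_i:=T_i\cap T_{i+1}$ (where $T_j$ denotes the ellipse tangent at $P_j$) has interior angles $\pi/2-\theta_i/2$, $\pi/2-\theta_{i+1}/2$, and $(\theta_i+\theta_{i+1})/2$. The last of these is precisely the interior angle $\theta_i'$ of the outer polygon at $Q_i$, so $2\theta_i'=\theta_i+\theta_{i+1}$. Specialising to $N=6$ then gives $k_{104}=\sum_{i=1}^{6}\cos(\theta_i+\theta_{i+1})$.

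Both $k_{101}=JL-6$ and $k_{104}$ are already known to be invariants of the Poncelet $6$-family (see Table~\ref{tab:invariants}), so the claim reduces to verifying the identity $\sum\cos(\theta_i+\theta_{i+1})=\sum\cos\theta_i$ at a single representative orbit for each aspect ratio. The natural choice, following the paper's standard method, is the axis-symmetric $6$-periodic with $P_1=(a,0)$ and $P_4=(-a,0)$. Reflection across the major axis forces $\theta_6=\theta_2$ and $\theta_5=\theta_3$, so the target identity collapses to
\begin{equation*}
\cos\theta_1+2\cos\theta_2+2\cos\theta_3+\cos\theta_4 \;=\; 2\cos(\theta_1+\theta_2)+2\cos(\theta_2+\theta_3)+2\cos(\theta_3+\theta_4),
\end{equation*}
a trigonometric statement in four angles that depend only on $a,b$.

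From here I would substitute the explicit coordinates of the symmetric $N=6$ vertices from Appendix~\ref{app:vertices-caustics}, write each $\cos\theta_j$ as the normalised dot product of the two incident edge vectors, and reduce both sides using a CAS. The main obstacle is algebraic bulk rather than conceptual difficulty: the vertex coordinates involve nested radicals, so raw expansion is unwieldy. To tame it I would use the double-angle form $\cos(\theta_i+\theta_{i+1})=1-2\sin^2\theta_i'$, the Joachimsthal relation tying $\sin(\theta_j/2)$ to $|\nabla f(P_j)|$, and the pairing of index $i$ with $7-i$ to halve the number of distinct terms before simplification. Once both sides reduce to the same function of $a,b$, invariance extends the equality to the full family, and the known value $k_{101}=JL-6$ completes the proof.
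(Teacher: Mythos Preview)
Your approach is essentially the paper's declared method: since $k_{101}$ and $k_{104}$ are both already proved invariant (references in Table~\ref{tab:invariants}), it suffices to evaluate each on the axis-symmetric $6$-periodic with $P_1=(a,0)$ and check equality symbolically. The paper does not spell out any details beyond this template; you do the same but defer the final CAS verification, so the two are on equal footing.

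The one extra ingredient you bring is the elementary identity $\theta_i'=(\theta_i+\theta_{i+1})/2$, obtained from the reflection triangle $P_iQ_iP_{i+1}$. This is correct (and even if one worries about interior-versus-exterior angle at $Q_i$, the conclusion $\cos 2\theta_i'=\cos(\theta_i+\theta_{i+1})$ survives since $\cos$ is even and $2\pi$-periodic). It lets you work entirely with the orbit angles $\theta_j$ rather than computing tangential-polygon angles directly, which is a genuine simplification over a naive implementation of the paper's recipe. A further shortcut you did not exploit: every simple $6$-periodic is centrally symmetric, so $\theta_{i+3}=\theta_i$; combined with your major-axis reflection this forces $\theta_2=\theta_3=\theta_5=\theta_6$ and $\theta_1=\theta_4$, reducing the target identity to a relation in just two angles, which makes the final CAS step almost trivial given the clean closed-form vertices in Appendix~\ref{app:6-periodic}.
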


\begin{proposition}
For $N=6$, $k_{106}$ is given by:

\[ k_{106}=  \, \,{\frac {4{b}^{2} \left( 2\,a+b \right) {a}^{2} \left( a+2\,b
 \right) }{ \left( a+b \right) ^{2}}}
  =  -\,{\frac { \left( J\,L-12 \right)  \left( J
\,L-4 \right) ^{2}}{16 J^{4}}}
\]
\label{thm:n6k106}
\end{proposition}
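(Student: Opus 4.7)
The plan is to exploit the invariance itself: since $k_{106}=A' A$ is conserved across the entire family, it suffices to evaluate it at one convenient configuration, which I take to be the axis-symmetric $N=6$ orbit with $P_1=(a,0)$. Using the closed-form expressions for the $N=6$ caustic and vertices collected in Appendix~\ref{app:vertices-caustics}, the six vertices $P_i$ can be written as explicit functions of $a,b$; axis symmetry forces $P_4=(-a,0)$ and pairs $(P_2,P_6)$, $(P_3,P_5)$ to be reflections about the $x$-axis, which will drastically reduce the bookkeeping later on.

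Next, I would compute the outer polygon vertices as the intersections of successive tangent lines $x x_i/a^2 + y y_i/b^2 = 1$ and $x x_{i+1}/a^2 + y y_{i+1}/b^2 = 1$ to the billiard at $P_i,P_{i+1}$, then apply the shoelace formula to both the inner hexagon $(P_i)$ and the outer hexagon $(Q_i)$. Because of the $x$-axis symmetry, each signed-area sum collapses to a handful of genuinely distinct terms. Forming the product $A'\cdot A$ and handing it to a CAS for common-factor cancellation should then yield the first claimed form
\[
k_{106}=\frac{4 b^{2}(2a+b)a^{2}(a+2b)}{(a+b)^{2}}.
\]

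For the second form, I would avoid recomputing $J$ and $L$ from scratch and instead reuse the $N=6$ identities already in hand. Proposition~\ref{thm:n6k102} gives $(JL-4)^{2}=16\,a^{2}b^{2}/(a+b)^{4}$, so $JL-4=4ab/(a+b)^{2}$, and a short manipulation yields $JL-12=-4(2a^{2}+3ab+2b^{2})/(a+b)^{2}$. Combined with the $N=6$ expression for $J$ obtained from the caustic semi-axes (standard Joachimsthal formula $J^{2}=1/a_{c}^{2}-1/a^{2}$ applied to the explicit $N=6$ caustic), a direct substitution into $-(JL-12)(JL-4)^{2}/(16J^{4})$ should match the first form; as a sanity check one can evaluate both sides at $a=b$ (where $J=\sqrt{3}/(2a)$ and the hexagon is regular) to pin down signs.

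The main obstacle is the symbolic weight of the $N=6$ vertex coordinates, which involve nested radicals coming from the Cayley--Poncelet condition; the tangent-line intersection formulas then amplify these into unwieldy expressions before the eventual cancellations. However, the axis-symmetric choice of configuration and the availability of the $JL-4$ identity reduce the proof to mechanical simplification plus the numerical cross-validation across several aspect ratios $a/b$ prescribed by step (iv) of the paper's proof methodology.
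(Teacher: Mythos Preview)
Your approach is essentially the paper's own: pick the axis-symmetric configuration $P_1=(a,0)$, pull the explicit $N=6$ vertices from Appendix~\ref{app:6-periodic}, compute the outer tangential hexagon, take the shoelace product $A'A$, and simplify with a CAS, followed by numerical cross-checks. That is exactly steps (i)--(iv) of the proof methodology announced in Section~2, so strategically you are on target.

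A few concrete slips to fix before you execute it. First, the Joachimsthal relation you quote, $J^{2}=1/a_{c}^{2}-1/a^{2}$, is not the one used in this paper; Appendix~\ref{app:invariants} gives $J=\sqrt{a^{2}-a''^{2}}/(ab)$, which for the $N=6$ caustic $a''=a\sqrt{a(a+2b)}/(a+b)$ yields simply $J=1/(a+b)$. Second, with this $J$ and $L=4(a^{2}+ab+b^{2})/(a+b)$ one gets $JL-4=-4ab/(a+b)^{2}$ (negative, so be careful when taking the square root out of the $k_{102}$ identity) and $JL-12=-4(2a^{2}+5ab+2b^{2})/(a+b)^{2}=-4(2a+b)(a+2b)/(a+b)^{2}$, not the $3ab$ coefficient you wrote. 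Third, your sanity-check value $J=\sqrt{3}/(2a)$ at $a=b$ is the $N=3$ value; for the regular hexagon it is $J=1/(2a)$. Finally, the $N=6$ vertices in Appendix~\ref{app:6-periodic} involve only the single radical $\sqrt{b(2a+b)}$, not nested ones, so the ``main obstacle'' you anticipate is largely absent in this case.
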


\begin{proposition}
For $N=6$, $k_{110}$ is given by: 

\[ k_{110}=    {\frac { 4 {a}^{3}{b}^{3} \left( 2\,a+b \right) ^{2} \left( a+2\,b
 \right)^2 }{ \left( a+b \right) ^{6}}}
  =   -{\frac { \left( J\,L-12 \right) ^{2} \left( J\,L-4 \right) ^{3}}{256\,J^{4}}}
\]
\label{thm:n6k110}
\end{proposition}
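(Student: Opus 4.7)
The plan is to follow the proof methodology declared in Section~\ref{sec:intro} and already used for the preceding $N=6$ propositions: start from the axis-symmetric $N=6$ orbit and reduce the product $A\cdot A''$ to a rational function of $a,b$ by symbolic computation.

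First, I would use the explicit $N=6$ formulas of Appendix~\ref{app:6-periodic} for the caustic semi-axes $(a_c,b_c)$ (both rational in $a,b$ for $N=6$) and the six vertices $P_i$ in the axis-symmetric starting configuration with $P_1=(a,0)$. The signed hexagon area $A$ is then immediate from the shoelace formula. Next, each inner-polygon vertex $T_i$ is the tangency point of the chord $\overline{P_iP_{i+1}}$ with the confocal caustic, recovered as the double root of the restriction of $(x/a_c)^2+(y/b_c)^2=1$ to the parametrized chord; this yields a rational formula for $T_i$ in terms of $P_i,P_{i+1},a_c,b_c$. A second shoelace computation then produces $A''$.

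Forming $A\cdot A''$ and simplifying with a CAS, the dependence on the orbit's free parameter drops out (as guaranteed by the invariance of $k_{110}$), and careful factoring collapses the remaining rational function of $a,b$ to $4a^3b^3(2a+b)^2(a+2b)^2/(a+b)^6$. For the second equality, I would eliminate $a,b$ in favor of $J,L$ using the $N=6$ Cayley identities implicit in Proposition~\ref{thm:n6k102}; namely $(JL-4)^2 = 16a^2b^2/(a+b)^4$, together with the known closed-form for $J$ on the six-periodic family, rewriting $4a^3b^3(2a+b)^2(a+2b)^2/(a+b)^6$ as $-(JL-12)^2(JL-4)^3/(256J^4)$. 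The analogous elimination was already carried out for $k_{106}$ in Proposition~\ref{thm:n6k106}, so the same substitutions apply here.

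The main obstacle is not conceptual but algebraic: the $T_i$ carry nontrivial denominators that must be cleared uniformly so that the shoelace sum telescopes, and keeping the axial-symmetry structure visible throughout is what allows the eventual factorization $(2a+b)^2(a+2b)^2/(a+b)^6$ to be recognized. As in the paper's standard methodology, final numerical verification over a range of aspect ratios $a/b$ and starting positions confirms the closed form.
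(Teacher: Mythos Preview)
Your proposal is correct and follows essentially the same approach as the paper: the paper explicitly states (Section~2, ``A word about our proof method'') that all such propositions are obtained by taking the axis-symmetric orbit with $P_1=(a,0)$ from Appendix~\ref{app:vertices-caustics}, computing the invariant symbolically, simplifying with CAS, and then verifying numerically across configurations. The only minor slip is your reference to a ``free parameter dropping out'' after having already fixed $P_1=(a,0)$; in the paper's scheme the invariance of $k_{110}$ is not derived symbolically but taken as given (proof column ``?'' in Table~\ref{tab:invariants}) and checked numerically, so the computation at the single axis-symmetric configuration suffices.
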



\begin{proposition}
For $N=6$, $k_{119}^3= \frac{2^5 J^5 L^3}{(JL-4)^4}$ 
\end{proposition}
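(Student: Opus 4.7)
The plan is to reduce $k_{119}=\sum_{i=1}^6\kappa_i^{2/3}$ to a short rational sum using the ellipse's curvature formula, evaluate it on one convenient $N=6$ orbit by invariance, and then translate the closed form into $(J,L)$.

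At a boundary point $(a\cos t,b\sin t)$ the curvature equals $\kappa(t)=ab/\rho(t)^3$ with $\rho(t)^2:=a^2\sin^2 t+b^2\cos^2 t$, so $\kappa^{2/3}=(ab)^{2/3}/\rho^2$ and
\[
k_{119}\;=\;(ab)^{2/3}\sum_{i=1}^{6}\rho(t_i)^{-2}.
\]
Since $k_{119}$ is invariant on the family, I would evaluate it on the axis-symmetric orbit with $P_1=(a,0)$. Reflection in the $x$-axis forces the remaining vertex angles to be $\pm t_2,\pm t_3,\pi$, so $\rho^2$ takes only three values $b^2,\rho_2^2,\rho_3^2$, each occurring twice, giving
\[
k_{119}\;=\;2(ab)^{2/3}\!\left(\tfrac{1}{b^2}+\tfrac{1}{\rho_2^2}+\tfrac{1}{\rho_3^2}\right).
\]
The angles $t_2,t_3$, and hence $\rho_2^2$ and $\rho_3^2$, come from the explicit axis-symmetric $N=6$ vertex formulas of Appendix~\ref{app:vertices-caustics}, which are rational in the caustic semi-axes and therefore in $a,b$.

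After clearing denominators---the CAS-assisted simplification described in the paper's methodology---the bracketed sum collapses to a compact rational function of $a,b$. Substituting the $N=6$ expressions for $J$ and $L$ (available from the same Appendix, and constrained through Proposition~\ref{thm:n6k102}, which already writes $(JL-4)^2$ rationally in $a,b$) then rewrites the result as $2^5 J^5 L^3/(JL-4)^4$. Alternatively, one cubes the target and uses the identity $k_{102}=(JL-4)^2/64$ of Proposition~\ref{thm:n6k102} to pull the right-hand side back into the $(a,b)$ variables for a direct term-by-term cross-check.

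The main obstacle is the algebraic simplification of the symmetric functions $\rho_2^{-2}+\rho_3^{-2}$ and $\rho_2^{-2}\rho_3^{-2}$. Both quantities encode the $N=6$ Cayley closure condition relating $a,b$ to the caustic, and it is precisely this relation that collapses a seemingly complicated expression into the clean $J,L$-form; tracking which powers of $(JL-4)$ survive the cancellation, and verifying that no extraneous factors of $J$ or $L$ are introduced when one passes from the $(a,b)$ form to the $(J,L)$ form, is where one must be most careful.
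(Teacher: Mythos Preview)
Your plan is sound and matches the paper's stated methodology (evaluate on the axis-symmetric orbit with $P_1=(a,0)$, simplify, then pass to $(J,L)$). One small overcount: for that particular $N=6$ orbit the four off-axis vertices $P_2,P_3,P_5,P_6$ differ only by signs of coordinates, so they all share the same $\rho^2=2a^2b/(a+b)$; there are just two distinct values of $\rho^2$, not three, and the sum $\sum\rho_i^{-2}$ collapses immediately to $2/b^2+4(a+b)/(2a^2b)=2(a^2+ab+b^2)/(a^2b^2)$, giving $k_{119}=2(a^2+ab+b^2)/(ab)^{4/3}$ with no CAS needed.

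The paper also has a shortcut you did not use: Stachel's identity \eqref{eqn:k119}, $k_{119}=L/[2J(ab)^{4/3}]$, valid for all $N$. Combined with the $N=6$ data $J=1/(a+b)$ and $L=4(a^2+ab+b^2)/(a+b)$ (equivalently, $JL-4=-4ab/(a+b)^2$, which is Proposition~\ref{thm:n6k102}), cubing and substituting yields the claimed $2^5J^5L^3/(JL-4)^4$ in one line. Your route and this one land on the same $(a,b)$ expression; Stachel's formula just spares you the vertex-by-vertex curvature sum.
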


\begin{proposition}
For $N=6$, $k_{802,a}=\frac{2(a^2+ab+b^2)}{ab^2}=\frac{ 4 J^2L\left(1+\sqrt{J L-3}\right) }{(J L-4)^2}$ 
\end{proposition}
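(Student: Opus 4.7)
The plan is to follow the standard method of Section~\ref{sec:invariants}: since the invariance of $k_{802,a}=\sum_{i=1}^{6}1/d_{1,i}$ is already established in \cite{akopyan2020-invariants}, we may evaluate the sum on any convenient representative---specifically the axis-symmetric 6-periodic with $P_{1}=(a,0)$---and then translate to the $(J,L)$ coordinates.

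For this representative, the combined mirror and central symmetries collapse the six vertices to
\[
P_{1}=(a,0),\ P_{4}=(-a,0),\ P_{2}=(x_{2},y_{2}),\ P_{6}=(x_{2},-y_{2}),\ P_{3}=(-x_{2},y_{2}),\ P_{5}=(-x_{2},-y_{2}),
\]
with $(x_{2},y_{2})$ given explicitly in Appendix~\ref{app:6-periodic}. Using the focal-radius identity $d_{1,i}=a-(c/a)x_{i}$ and pairing conjugate vertices, the sum telescopes to
\[
\sum_{i=1}^{6}\frac{1}{d_{1,i}}=\frac{1}{a-c}+\frac{1}{a+c}+\frac{4a^{3}}{a^{4}-c^{2}x_{2}^{2}}=\frac{2a}{b^{2}}+\frac{4a^{3}}{a^{4}-c^{2}x_{2}^{2}},
\]
which, upon substitution of the closed form $x_{2}=a^{2}/(a+b)$ together with $c^{2}=a^{2}-b^{2}$, simplifies to $2(a^{2}+ab+b^{2})/(ab^{2})$.

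For the second equality I would use Proposition~\ref{thm:n6k102}, which immediately yields $JL-4=4ab/(a+b)^{2}$, and consequently $JL-3=(a^{2}+6ab+b^{2})/(a+b)^{2}$ and $(JL-4)^{2}=16a^{2}b^{2}/(a+b)^{4}$. The remaining ingredient $J^{2}L$ I would compute independently from the explicit $N=6$ caustic semi-axes in Appendix~\ref{app:6-periodic} via Joachimsthal's formula; substituting everything into $4J^{2}L(1+\sqrt{JL-3})/(JL-4)^{2}$ and rationalizing the square root should reproduce $2(a^{2}+ab+b^{2})/(ab^{2})$.

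The main obstacle is algebraic rather than conceptual: the target expression carries the irrational factor $\sqrt{JL-3}=\sqrt{a^{2}+6ab+b^{2}}/(a+b)$, whereas the left-hand side is rational in $a,b$. The cancellation hinges on $J^{2}L$ itself inheriting a copy of $\sqrt{a^{2}+6ab+b^{2}}$ from $J$; the conjugate identity $(a+b)^{2}-(a^{2}+6ab+b^{2})=-4ab$ then collapses the square roots, leaving a rational expression. Choosing the correct sign branch (so that $J>0$ and the chosen root of $\sqrt{JL-3}$ matches) is the only subtle step, and the numerical check prescribed in step (iv) of the paper's proof template would serve as a final sanity test.
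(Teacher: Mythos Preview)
Your derivation of the first equality $k_{802,a}=2(a^{2}+ab+b^{2})/(ab^{2})$ is correct and is exactly the paper's template computation: evaluate on the axis-symmetric $6$-periodic of Appendix~\ref{app:6-periodic}, use the focal-radius formula, and simplify.

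The second equality, however, contains a genuine error. From Proposition~\ref{thm:n6k102} you only obtain $(JL-4)^{2}=16a^{2}b^{2}/(a+b)^{4}$; you then assert $JL-4=+4ab/(a+b)^{2}$, but the sign is wrong. Using the explicit data of Appendix~\ref{app:6-periodic} together with Stachel's formula in Appendix~\ref{app:invariants},
\[
a''^{2}=\frac{a^{3}(a+2b)}{(a+b)^{2}},\qquad a^{2}-a''^{2}=\frac{a^{2}b^{2}}{(a+b)^{2}},\qquad J=\frac{1}{a+b},\qquad L=\frac{4(a^{2}+ab+b^{2})}{a+b},
\]
so $JL-4=-4ab/(a+b)^{2}$ and $JL-3=(a-b)^{2}/(a+b)^{2}$, a perfect square. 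With these correct values the right-hand side evaluates directly:
\[
\frac{4J^{2}L\bigl(1+\sqrt{JL-3}\bigr)}{(JL-4)^{2}}
=\frac{4\cdot\dfrac{4(a^{2}+ab+b^{2})}{(a+b)^{3}}\cdot\dfrac{2a}{a+b}}{\dfrac{16a^{2}b^{2}}{(a+b)^{4}}}
=\frac{2(a^{2}+ab+b^{2})}{ab^{2}}.
\]
Your proposed mechanism---that $J^{2}L$ carries a factor $\sqrt{a^{2}+6ab+b^{2}}$ which cancels against $\sqrt{JL-3}$---is false: $J=1/(a+b)$ is rational in $a,b$, and no such surd appears anywhere. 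Had you followed your own plan and actually computed $J$ from the caustic before extracting $JL-4$ from $k_{102}$, the sign issue would have been caught immediately.
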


\begin{proposition}
For $N=6$, $k_{803}=
2{\rho^2}(2a^2+2ab-b^2)/(ab^2)$. 
\label{prop:k803}
\end{proposition}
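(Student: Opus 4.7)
The plan is to exploit the standard formula for how inversion in a unit circle transforms chord lengths together with the known invariance of $L_1^\dagger$ over the N-periodic family, and then to evaluate the sum at a single, conveniently symmetric member of the $N=6$ family, as per the proof method outlined in the Preliminaries.

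First I would recall that if $P_i^\dagger$ denotes the image of $P_i$ under inversion in the circle of radius $\rho$ centered at $f_1$, then
\[
|P_i^\dagger - P_{i+1}^\dagger| \;=\; \frac{\rho^{2}\,|P_i - P_{i+1}|}{d_{1,i}\,d_{1,i+1}},
\qquad d_{1,i}:=|f_1 P_i|,
\]
so that
\[
L_1^\dagger \;=\; \rho^{2}\sum_{i=1}^{6}\frac{|P_i - P_{i+1}|}{d_{1,i}\,d_{1,i+1}} .
\]
Since $L_1^\dagger$ is invariant over the $N=6$ family (the quantity $k_{803}$ of Table~\ref{tab:invariants}), it suffices to evaluate this sum on any one convenient representative.

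Next I would choose the axis-symmetric $N=6$ configuration with $P_1=(a,0)$, as prescribed by the paper's standard method. Using the explicit vertex and caustic formulas for $N=6$ collected in Appendix~\ref{app:vertices-caustics}, the remaining vertices $P_2,\ldots,P_6$ have rational coordinates in $a,b$, with the reflection symmetries $P_4=(-x_4,0)$, $P_5=\overline{P_3}$, $P_6=\overline{P_2}$. I would then use the focal-radius identity $d_{1,i}=a+ex_i$ (with $e=c/a$) to write each $d_{1,i}$ as a simple affine function of the vertex abscissa, and compute the six side lengths $|P_iP_{i+1}|$ directly. The symmetry collapses the six-term sum into three distinct contributions, doubled.

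Finally, the summation and simplification would be carried out by a CAS. I expect the main obstacle to be not the set-up but the algebraic simplification of the resulting expression into the neat closed form $2\rho^{2}(2a^{2}+2ab-b^{2})/(ab^{2})$; the sum naturally produces a ratio of high-degree polynomials in $a,b$, and one must show the factorization that yields the claimed numerator $2a^{2}+2ab-b^{2}$. Once simplified, a numerical sanity check over several aspect ratios $a/b$ (and a non-symmetric member of the $N=6$ family, to confirm invariance) completes the proof.
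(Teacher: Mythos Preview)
Your proposal is correct and follows essentially the same route the paper prescribes in its ``A word about our proof method'': evaluate the invariant on the axis-symmetric $N=6$ configuration with $P_1=(a,0)$ using the explicit vertices of Appendix~\ref{app:6-periodic}, simplify via CAS, and confirm numerically on non-symmetric orbits. Your added ingredients---the standard inversion distance formula $|P_i^\dagger-P_{i+1}^\dagger|=\rho^2|P_i-P_{i+1}|/(d_{1,i}d_{1,i+1})$ and the focal-radius identity $d_{1,i}=a+e x_i$---are exactly what is needed to make the computation explicit, and the symmetry reduction to three distinct terms is correct.
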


\begin{proposition}
For $N=6$, $k_{806}=
4{\rho^{-4}}a^3b^4/((2a-b)(a+b)^2)$. 
\label{prop:k806}
\end{proposition}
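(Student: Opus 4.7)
The plan is to follow the same template used for the other $N=6$ propositions in Section~\ref{sec:invariants-n6} (in particular Propositions~\ref{prop:k803} and the $k_{806}$ case is the natural companion of $k_{805,a}$): compute both $A$ and $A_1^\dagger$ in closed form on a single canonical representative, then invoke the established invariance of the ratio to finish.

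First I would use the explicit parametrization of $N=6$ vertices from Appendix~\ref{app:vertices-caustics} to write down the axis-symmetric 6-periodic whose first vertex is $P_1=(a,0)$. In this symmetric position the family has a convenient description in which $P_4=(-a,0)$ and the remaining four vertices come in pairs symmetric across both axes, with coordinates expressible rationally in $a,b$. The signed area $A$ of the hexagon is then obtained by a one-line shoelace computation, giving an expression that factors cleanly in terms of $a,b$ and $(a+b)$. Next I would compute the $f_1$-inversive image $P_i^\dagger = f_1 + \rho^2(P_i-f_1)/|P_i-f_1|^2$ for each vertex (the radius $\rho$ enters only through an overall factor $\rho^{2\cdot 6}/(\text{products of spokes}^2)$ that can be tracked symbolically), and then apply the shoelace formula again to get $A_1^\dagger$.

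At that point I would form the ratio $A/A_1^\dagger$ and simplify using a CAS, factoring the resulting rational function in $a,b$. Two structural observations should make the simplification tractable: (i) the product $\prod_{i=1}^{6}|P_i-f_1|^2$ factors through the identity $|P_i - f_1| = a - e x_i$ with $e=c/a$, and the symmetric placement makes these six focal radii come in three mirrored pairs; (ii) the combination $(2a-b)(a+b)^2$ can be recognized from the closed-form expression for $k_{106}$ in Proposition~\ref{thm:n6k106} together with the identity $a+2b$ appearing there, so I would aim to rewrite intermediate products in the basis $\{a+b,\,2a-b,\,a+2b,\,ab\}$ that is already natural for the $N=6$ family. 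Finally, by the general argument at the start of Section~\ref{sec:invariants} (the symmetric value equals the invariant value), together with a numerical sanity check at several values of $a/b$ and several non-symmetric starting vertices, the ratio equals $4\rho^{-4}a^3b^4/((2a-b)(a+b)^2)$ throughout the family.

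The main obstacle is the algebraic bookkeeping in step two: the six inversive vertices are rational functions of $a,b$ with denominators $|P_i-f_1|^2$, so the shoelace expansion produces a rational expression whose numerator and denominator each have many terms before cancellation. The key to making this manageable is to pull out the common factor $\prod_i |P_i-f_1|^{-2}$ early, recognize it (up to the $\rho^{-4}$ factor) as a symmetric function of the focal chord lengths, and simplify it using the standard identity $L = \sum |P_i-f_1| + \sum |P_i-f_2|$ already invoked in the earlier $N=3$ and $N=6$ derivations. Once that factor is in closed form, the residual polynomial quotient factors over $(a+b)^2(2a-b)$ by inspection.
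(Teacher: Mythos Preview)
Your proposal is correct and follows precisely the paper's declared template (axis-symmetric representative with $P_1=(a,0)$, symbolic computation of the quantity via CAS, then numerical verification across configurations and aspect ratios); the paper in fact omits an explicit proof for this proposition and simply refers to that general method. Your extra remarks about organizing the focal-radius products and targeting the factor $(2a-b)(a+b)^2$ are helpful implementation detail but do not depart from the paper's approach.
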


Referring to Figure~\ref{fig:n6-I}:

\subsection{N=6 self-intersecting}

Take a regular hexagon. There are 60
orderings with which to connect its vertices (modulo rotations and chirality); see \cite[Pascal Lines]{mw}.

It turns out only two $N=6$ topologies can produce closed trajectories, both with two self-intersections. These will be referred to as type I and type II, and are depicted in Figures \ref{fig:n6-I}, and \ref{fig:n6-II}, respectively.

\begin{figure}
    \centering
    \includegraphics[width=\textwidth]{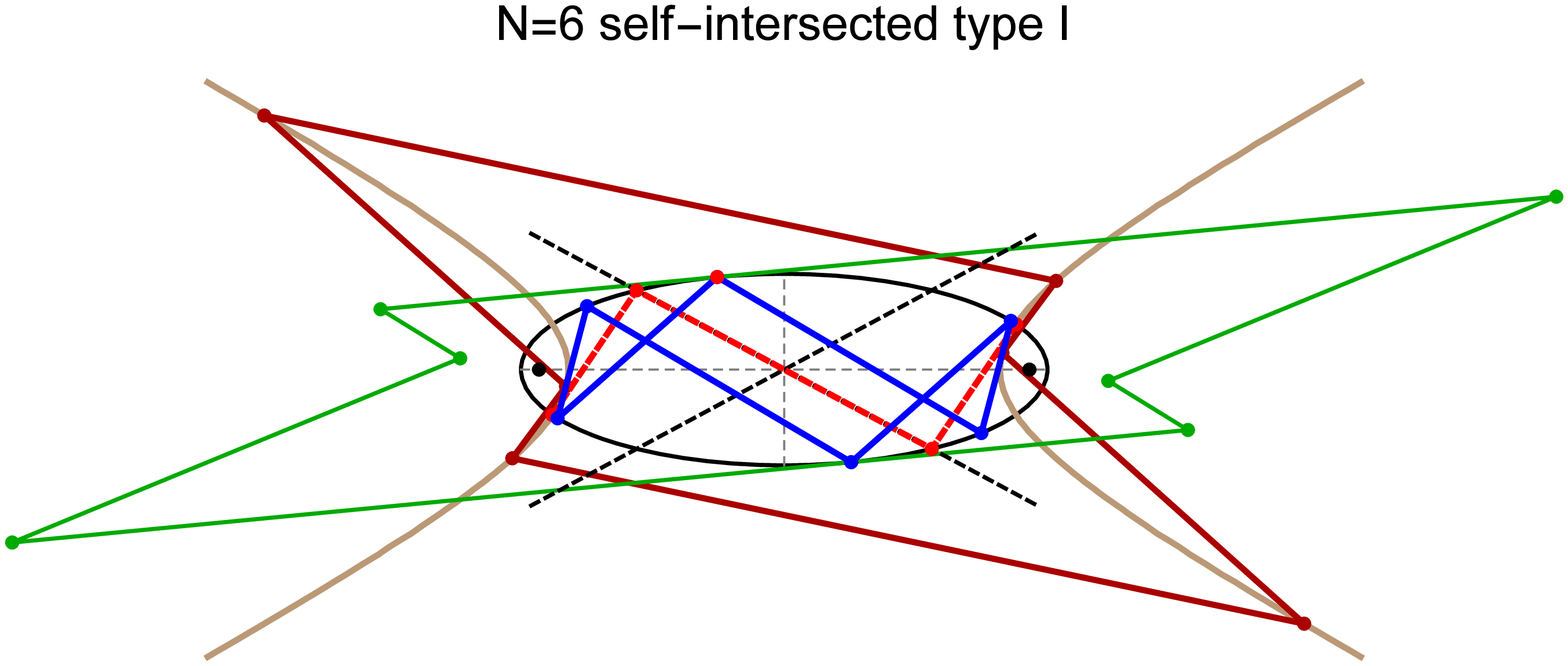}
    \caption{Type I self-intersected 6-periodic (blue) and its doubled-up configuration (dashed red), both tangent to a hyperbolic confocal caustic (brown). Its asymptotes (dashed black) pass through the center of the EB. Also shown are the outer (green) and inner (dark red) polygons. \href{https://youtu.be/fOD85MNrmdQ}{Video}}
    \label{fig:n6-I}
\end{figure}

\begin{figure}
    \centering
    \includegraphics[width=1\textwidth]{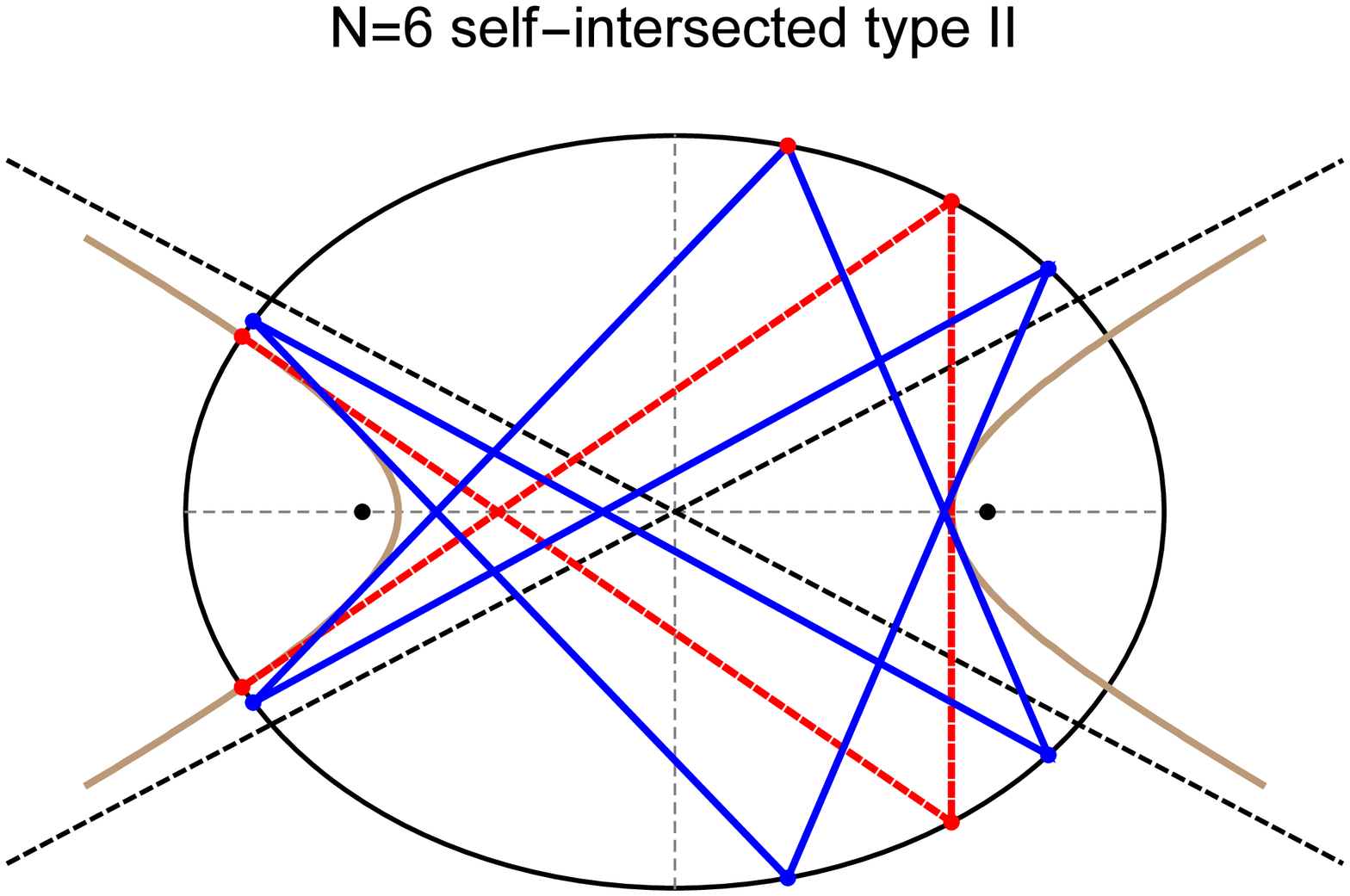}
    \caption{Self-intersected 6-periodic (type II) shown both at one of its doubled-up configurations (dashed red) and in general position (blue). Segments are tangnet to a hyperbolic confocal caustic (brown) whose asymptotes (dashed black) pass through the center of the EB. Also shown is the outer polygon (green) which in this case is always simple. \href{https://youtu.be/gQ-FbSq7wWY}{Video}}
    \label{fig:n6-II}
\end{figure}

\begin{proposition} 
For $N=6$ type I,  $k_{102}= a^2b^2/(4(a-b)^4) = (J\;L-4)^2/64$ 
\label{thm:n6k102I}
\end{proposition}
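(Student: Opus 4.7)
The plan is to mirror the argument for the simple $N=6$ case (Proposition~\ref{thm:n6k102}), adapting the computation to the hyperbolic confocal caustic associated with type I self-intersected 6-periodics. By Poncelet's porism the product $\prod_{i=1}^{6}\cos\theta_i'$ is constant along the family, so it suffices to evaluate it at one convenient configuration. I would choose the axis-symmetric position with $P_1=(a,0)$, or alternatively the doubled-up configuration shown in dashed red in Figure~\ref{fig:n6-I}; in either case the explicit vertices $P_i$ and the hyperbolic caustic semi-axes are supplied by the formulas in Appendix~\ref{app:vertices-caustics}.

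From the $P_i$ I would build the outer polygon by intersecting consecutive tangent lines to the EB, then form $\cos\theta_i'$ from dot products of successive outer edge vectors and multiply. Structurally this calculation is the same as in the simple $N=6$ case; the key substantive change is that the caustic's squared semi-minor axis has the opposite sign, and propagating that sign through the outer-polygon computation is precisely what converts $(a+b)^4$ into $(a-b)^4$ in the final expression. This mirrors the analogous sign flip observed in other simple-vs-self-intersected pairs tabulated in this paper.

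The second equality $a^2b^2/(4(a-b)^4)=(JL-4)^2/64$ then reduces to the identity $JL-4=\pm\,4ab/(a-b)^2$. I would verify this directly from closed forms for $J$ and $L$ in the type I $N=6$ family: $L$ is obtained by summing the six side lengths in the doubled-up configuration, while $J$ follows from Joachimsthal's relation applied at any vertex using the hyperbolic caustic parameters. Combining this with the first equality yields the full claim.

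The main obstacle will be bookkeeping rather than anything conceptual. In the self-intersected type I topology some outer-polygon vertices lie on the far side of the billiard and the corresponding $\theta_i'$ can be reflex, so the signs of the individual factors $\cos\theta_i'$ must be tracked consistently in order to recover the stated sign and denominator. Following the proof methodology described in the Preliminaries, a numerical check across several aspect ratios $a/b$ would serve as a final sanity test.
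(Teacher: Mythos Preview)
Your proposal is essentially the paper's own method: the authors omit the proof and state in the Preliminaries that all such results are obtained by (i) taking the explicit axis-symmetric vertices from Appendix~\ref{app:vertices-caustics}, (ii) forming the invariant symbolically, (iii) simplifying with CAS, and (iv) spot-checking numerically. One small correction: for $N=6$ type~I the appendix supplies the vertices with $P_1=(0,b)$ (the configuration exists only for $a>2b$, and the natural symmetric representative has a vertex on the minor axis, not at $(a,0)$), so that is the configuration you should actually plug in; with the listed $L=4(a^2-ab+b^2)/(a-b)$ and Stachel's formula giving $J=1/(a-b)$, the identity $JL-4=4ab/(a-b)^2$ drops out immediately, confirming the second equality exactly as you outline.
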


\begin{proposition} 
For $N=6$ type II,
\[k_{102}= a^2(a-c)^2/(4c^4)= (JL-8)^2(JL-4)^2/1024\] 
\label{thm:n6k102II}
\end{proposition}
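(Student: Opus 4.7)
The plan is to follow the standard ``symbolic substitution then CAS simplification'' recipe sketched in the Preliminaries, specializing it to the type II topology. First I would pick the axis-symmetric representative of the type II family (so that $P_1=(a,0)$), since $k_{102}$ is an invariant of the family and may be evaluated at any convenient configuration. For the type II case the confocal caustic is a hyperbola; using the explicit caustic semi-axes derived in Appendix~\ref{app:vertices-caustics} for $N=6$ type II, I would write the remaining five vertices $P_2,\dots,P_6$ as intersections of tangent lines from $P_i$ to the caustic with the outer ellipse \eqref{eqn:billiard-f}, choosing at each bounce the branch dictated by the type II turning number.

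Next I would construct the outer polygon $\{Q_i\}$ by intersecting the billiard tangent lines at consecutive $P_i,P_{i+1}$, then compute each $\cos\theta'_i$ as the normalized dot product of the two outgoing edges $Q_{i-1}-Q_i$ and $Q_{i+1}-Q_i$. Taking the product and reducing in a CAS should, after symmetry simplifications coming from the axis-symmetric representative (pairs of equal cosines by reflection across the minor axis), collapse to a rational function of $a$ and $b$ only, which I would then massage into the compact form $a^2(a-c)^2/(4c^4)$. Numerical verification over several values of $a/b$ and several configurations within the type II family (not just the axis-symmetric one) would confirm that the quantity is actually invariant rather than accidentally constant at the symmetric configuration.

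For the second equality I would substitute the $N=6$ expressions for $J$ and $L$ (computable from the same axis-symmetric configuration via Joachimsthal's constant and summing the six chord lengths) into $(JL-8)^2(JL-4)^2/1024$ and verify agreement with the $(a,b,c)$ expression. Alternatively, one can invert to express $a,b,c$ in terms of $JL$: since the type II perimeter formula and Joachimsthal relation together give $JL$ as an explicit function of $a/b$, the algebraic identity between the two closed forms reduces to a one-variable polynomial identity that a CAS dispatches immediately.

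The main obstacle I anticipate is sign bookkeeping for $\cos\theta'_i$ in the self-intersected setting: the outer polygon of the type II family has its own self-crossings (or at least reverses orientation between vertices), so care is needed to use a consistent convention for the angle at each $Q_i$ so that the product agrees with the convention used in the simple-$N=6$ case (Proposition~\ref{thm:n6k102}) and in the type I case (Proposition~\ref{thm:n6k102I}). Getting this convention right is what ensures the result lands on the manifestly positive square $(JL-8)^2(JL-4)^2/1024$ rather than its signed version, and it is also what makes the pattern across simple, type I, and type II formulas conceptually uniform.
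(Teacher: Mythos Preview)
Your approach is essentially the paper's own: the paper omits the proof and appeals to the generic recipe in the Preliminaries (axis-symmetric representative, symbolic computation of the invariant, CAS simplification, numerical cross-check), which is exactly what you outline. One small correction: for the type~II family the axis-symmetric representative given in Appendix~\ref{app:vertices-caustics} has $P_1=(0,b)$, not $P_1=(a,0)$, and the explicit vertex list there (together with the perimeter $L=4(a+c)\sqrt{2a/c-1}$) is what you should feed into the computation rather than reconstructing vertices from tangents to the caustic.
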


\begin{proposition} 
For $N=6$ type I,  $k_{104}= - \,{\frac {2({a}^{2}-4\,a b+{b}^{2})}{ \left( a-b \right) ^{2}}} =JL-6 = k_{101}$.
\label{thm:n6k104I}
\end{proposition}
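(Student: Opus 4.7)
The plan is to decompose the statement into three claims and attack each separately: (a) the explicit closed form $JL-6=-2(a^2-4ab+b^2)/(a-b)^2$; (b) the Bialy--Mironov identity $k_{101}=JL-N$; and (c) the independent identity $k_{104}=k_{101}$ for $N=6$ type I. Item (b) is known \cite{bialy2020-invariants} and holds for Poncelet families irrespective of turning number, so $k_{101}=JL-6$ comes for free. Item (a) follows by back-solving Proposition~\ref{thm:n6k102I}: from $k_{102}=(JL-4)^2/64=a^2b^2/(4(a-b)^4)$ we extract $JL-4=4ab/(a-b)^2$ on the positive branch (the one realized by the type I family), and a one-line manipulation then gives $JL-6=(4ab-2(a-b)^2)/(a-b)^2=-2(a^2-4ab+b^2)/(a-b)^2$, matching the target expression.

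For item (c) I would follow the paper's standard methodology announced in ``A word about our proof method''. Using the explicit type I $N=6$ data in Appendix~\ref{app:vertices-caustics}, I parametrize an axis-symmetric representative with $P_1=(a,0)$, compute the six $P_i$, and construct the outer polygon as the successive intersections of the elliptic billiard tangents at the $P_i$. Each outer angle $\theta'_i\in[0,\pi]$ is then obtained as the dot-product angle between adjacent outer side vectors, and the sum $k_{104}=\sum_{i=1}^{6}\bigl(2\cos^{2}\theta'_i-1\bigr)$ is simplified in a CAS until it collapses to the same closed form $-2(a^2-4ab+b^2)/(a-b)^2$. Invariance over the family is then cross-checked numerically by sweeping both the starting parameter along the family and the aspect ratio $a/b$.

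The principal obstacle is geometric: unlike the simple $N=6$ case of Proposition~\ref{thm:n6k104}, the outer polygon of the type I configuration is itself self-intersected (contrast Figure~\ref{fig:n6-I} with the ``always simple'' outer polygon of type II in Figure~\ref{fig:n6-II}), so ``interior angle'' is not literally defined at a self-crossing outer vertex. This ambiguity is, however, harmless for $k_{104}$: the dot-product convention fixes $\theta'_i\in[0,\pi]$ unambiguously, and moreover the identity $\cos(2(\pi-\theta))=\cos(2\theta)$ means that any later sign or supplementary-angle convention choice does not affect the sum. What remains is a fairly mechanical CAS simplification, substantially shortened by the already-established closed form for $k_{102}$ which provides a target against which intermediate expressions can be rewritten via the relation $JL-4=4ab/(a-b)^2$.
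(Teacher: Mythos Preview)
Your approach is essentially the paper's own (omitted) CAS-verification methodology, with the welcome addition of deriving the closed form for $JL$ directly from the already-established $k_{102}$ identity rather than recomputing it from scratch. One small slip: for the type~I family the axis-symmetric representative in Appendix~\ref{app:vertices-caustics} has $P_1=(0,b)$, not $P_1=(a,0)$, so your symbolic computation in item~(c) should start from those vertices instead; apart from that the plan matches the paper's procedure exactly.
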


Note that $k_{104}=k_{101}=J L-6$ for both $N=6$ simple and type I. However:

\begin{proposition} 
For $N=6$ type II,  \[k_{104}= \frac{2(a^2-ac+c^2)(a^2-ac-c^2)}{c^4} = \frac{ (J^2L^2-12JL+16)(J^2L^2-12JL+48)}{128} \]
\label{thm:n6k104II}
\end{proposition}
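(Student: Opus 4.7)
The plan is to follow the standard four-step pipeline outlined in the ``word about our proof method'' paragraph, exploiting the fact that $k_{104}$ is invariant along the family: it suffices to evaluate it at a single convenient axis-symmetric configuration and simplify. First, using the explicit parametrization of the $N=6$ type II vertices furnished in Appendix~\ref{app:vertices-caustics}, I would select the representative with $P_1=(a,0)$; the remaining vertices $P_2,\ldots,P_6$ are then algebraic in $a,b$ through the hyperbolic caustic semi-axes. From each $P_i$ I form the tangent line $\ell_i$ to the billiard, and compute the outer polygon vertices $Q'_i=\ell_i\cap\ell_{i+1}$, where the indexing must respect the turning-number-2 topology of the type II trajectory so that $Q'_i$ really corresponds to an edge-to-edge transition of the outer polygon (which Figure~\ref{fig:n6-II} confirms is a simple hexagon).

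Second, at each $Q'_i$ I would compute the interior angle $\theta'_i$ via $\cos\theta'_i=\langle u_i,v_i\rangle/(\|u_i\|\,\|v_i\|)$, where $u_i,v_i$ are the two outgoing edges, and then form $\cos(2\theta'_i)=2\cos^2\theta'_i-1$. Summing yields a rational expression in $a,b$ once the caustic parameters have been eliminated by their defining equation. The axis symmetry collapses the six terms into three pairs, substantially shrinking the intermediate expression. I would hand the result to a CAS, expecting to recover $2(a^2-ac+c^2)(a^2-ac-c^2)/c^4$ after factoring; a numerical check at several non-symmetric configurations and several aspect ratios $a/b$ certifies the identity in light of the known invariance of $k_{104}$.

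The second equality then follows by substituting the $N=6$ type II closed forms for $J$ and $L$ — obtainable from the caustic semi-axes via the identities of Appendix~\ref{app:invariants}, in the spirit of \eqref{eqn:lj} for $N=3$ — into the right-hand side $(J^2L^2-12JL+16)(J^2L^2-12JL+48)/128$ and verifying agreement with the Cartesian form. The main obstacle will be in the first step: assembling the outer-polygon edges in the correct cyclic order for the type II turning number, so that $\theta'_i$ is the intended interior angle and not its supplement or a reflex counterpart, requires care in tracking orientations. A secondary difficulty is the size of the intermediate symbolic expression for $\sum\cos(2\theta'_i)$ before factorization; peeling out the two linear-in-$c$ factors $a^2\mp ac+c^2$ may require a resultant or Gr\"obner-basis reduction modulo the caustic relation rather than a blind \texttt{simplify}.
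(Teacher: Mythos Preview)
Your proposal is correct and follows essentially the same pipeline the paper declares for all of these propositions: pick an axis-symmetric representative, compute the outer polygon and its angles symbolically, simplify with CAS, and confirm numerically at generic configurations. One small correction: for $N=6$ type~II the appendix parametrizes the axis-symmetric orbit with $P_1=(0,b)$, not $P_1=(a,0)$; the vertices $P_2,\ldots,P_6$ are then explicit in $a,b,c$ (no implicit caustic elimination is needed), so the intermediate expressions are smaller than you anticipate and the factorization into $(a^2-ac+c^2)(a^2-ac-c^2)$ falls out directly.
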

 
\begin{proposition} 
For $N=6$ type I,  $k_{106}=  4 a^2b^2(a-2b)(2a-b)/(a-b)^2 = -(JL-12)(JL-4)^2/(16 J^4)$ 
\label{thm:n6k106I}
\end{proposition}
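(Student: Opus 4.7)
Following the proof method announced in Section~\ref{sec:intro} and used throughout, the plan is to evaluate $k_{106}=A\,A'$ at a single convenient configuration in the type~I family (invariance is assumed from Table~\ref{tab:invariants} and already leveraged for sister Propositions~\ref{thm:n6k102I}--\ref{thm:n6k104II}). I would first pick the axis-symmetric type~I $6$-periodic with $P_1=(a,0)$. Using the type~I caustic parameters and the vertex parametrization given in Appendix~\ref{app:vertices-caustics}, the remaining five vertices $P_2,\dots,P_6$ are expressible as explicit rational functions of $a,b$ (with one square root from the caustic semi-axis, which can be cleared by squaring at the end). Applying the shoelace formula to the six vertices yields the signed area $A$; because the hexagon has two self-intersections, care must be taken that the shoelace output is used \emph{as is}, without absolute value, so the positive and negative sub-lobes cancel in the prescribed type~I way.

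Next, for each $P_i=(x_i,y_i)$ I would write the tangent line to the billiard at $P_i$ as $x x_i/a^2+y y_i/b^2=1$ and obtain the outer-polygon vertex $Q_i$ as the intersection of the tangents at $P_i$ and $P_{i+1}$. A second shoelace computation (again signed) produces $A'$. Forming the product $A\cdot A'$, a CAS simplification collapses the square-root factors stemming from the caustic, leaving the rational expression
\[
k_{106}=\frac{4a^2b^2(a-2b)(2a-b)}{(a-b)^2},
\]
as claimed. The main obstacle here is bookkeeping of orientation: both the hexagon and its tangential polygon are self-intersected for type~I, so I expect sign drift during simplification; the cleanest way to avoid this is to fix the cyclic order of the $P_i$ by continuity from the ``doubled-up'' configuration of Figure~\ref{fig:n6-I}, and to use the \emph{same} orientation for the $Q_i$.

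For the second equality, I would invoke Proposition~\ref{thm:n6k102I}, which already identifies $(JL-4)^2/64$ with $a^2b^2/[4(a-b)^4]$, hence $(JL-4)^2=16a^2b^2/(a-b)^4$. It then remains to pin down $JL-12$ in terms of $a,b$ for the type~I family; this can be read off the $(J,L)$ closed forms associated to the confocal hyperbolic caustic of type~I (Appendix~\ref{app:6-periodic}), and a short manipulation gives $JL-12=-4J^4(a-2b)(2a-b)(a-b)^2$. Substituting both identities into $-(JL-12)(JL-4)^2/(16J^4)$ collapses to the Cartesian form above, completing the proof. The potentially delicate step is a sign check: the factor $(a-2b)(2a-b)$ changes sign across the aspect ratios $a/b=2$ and $a/b=1/2$, and one must verify that the sign agrees with the signed-area convention chosen in the shoelace step, which I would confirm numerically at $a/b=1.5$ and $a/b=3$.
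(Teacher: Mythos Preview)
Your proposal follows the same route as the paper's declared method (evaluate at an axis-symmetric configuration, CAS-simplify, then verify numerically), and your passage to the $(J,L)$ form via Propositions~\ref{thm:n6k102I} and \ref{thm:n6k104I} is exactly right: from $k_{104}=JL-6$ one gets $JL=4(a^2-ab+b^2)/(a-b)^2$, hence $JL-4=4ab/(a-b)^2$, $JL-12=-4(2a-b)(a-2b)/(a-b)^2$, and (since $L=4(a^2-ab+b^2)/(a-b)$) $J=1/(a-b)$, which collapses $-(JL-12)(JL-4)^2/(16J^4)$ to the Cartesian expression.

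Two small corrections are needed. First, the type~I vertex data in Appendix~\ref{app:6-periodic} is anchored at $P_1=(0,b)$, not $P_1=(a,0)$; use those explicit vertices directly rather than re-deriving from the caustic. Second, the type~I family exists only for $a/b>2$ (as stated in the appendix), so your proposed numerical check at $a/b=1.5$ is vacuous, and the sign discussion at $a/b=1/2$ is moot since $a>b$ throughout. On the admissible range $a/b>2$ both $(a-2b)$ and $(2a-b)$ are strictly positive, so no sign ambiguity actually arises.
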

 
\begin{proposition} 
For $N=6$ type I,  
\[k_{110}=   {\frac {-4{a}^{3}{b}^{3} \left( a-2\,b \right) ^{2} \left( 2\,a-b
 \right)^2  }{ \left( a-b \right) ^{6}}}
 =\frac { \left( JL-12 \right)^2  \left( JL-4 \right) ^{3}  }{2^8\,J^{4}}
\]
\label{thm:n6k110I}
\end{proposition}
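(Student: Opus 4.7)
The plan is to mirror the symbolic strategy already used for the simple $N=6$ case. First I would fix the axis-symmetric configuration with $P_{1}=(a,0)$; using the hyperbolic caustic semi-axes $(a_{c},b_{c})$ of the type I family from Appendix~\ref{app:vertices-caustics}, the remaining five vertices are pinned down by the reflection law together with the family's two-fold symmetry, so the entire polygon becomes rational in $a$ and $b$. I would then apply the shoelace formula to $(P_{1},\ldots,P_{6})$, traversed in the order dictated by the type I turning number, to get the signed area $A$; because the traversal is self-crossing, the two crossings contribute algebraically rather than geometrically.

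Next I would obtain the inner polygon vertices $T_{i}$ as the tangency points of the sides $P_{i}P_{i+1}$ with the caustic, imposing the tangency condition against $(x/a_{c})^{2}-(y/b_{c})^{2}=1$, and apply the shoelace formula to $(T_{1},\ldots,T_{6})$ in the induced order to get $A''$. With CAS help, $A\cdot A''$ should collapse to
\[
-\,\frac{4\,a^{3}b^{3}(a-2b)^{2}(2a-b)^{2}}{(a-b)^{6}};
\]
the factors $(a-2b)$ and $(2a-b)$ supply a useful sanity check, since at $a/b=2$ and $a/b=1/2$ the type I family degenerates and the inner polygon collapses. Invariance over the whole family is then verified by repeating the calculation at a generic parametrization $P_{1}=(au,\,b\sqrt{1-u^{2}})$ and observing that the resulting expression is independent of $u$.

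For the second equality, I would combine Proposition~\ref{thm:n6k102I} and Proposition~\ref{thm:n6k104I} to deduce $JL=4(a^{2}-ab+b^{2})/(a-b)^{2}$, hence
\[
JL-4=\frac{4ab}{(a-b)^{2}},\qquad JL-12=-\,\frac{4(a-2b)(2a-b)}{(a-b)^{2}}.
\]
To isolate $J$ itself (needed for the $J^{4}$ denominator), I would apply the Joachimsthal identity from Appendix~\ref{app:invariants} relating $J$ to the caustic semi-axes, substitute the explicit $(a_{c},b_{c})$ from Appendix~\ref{app:vertices-caustics}, and so express $J^{4}$ as a rational function of $a,b$. Plugging this together with the displays above into $(JL-12)^{2}(JL-4)^{3}/(2^{8}J^{4})$ and simplifying should reproduce the first form.

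The main obstacle is bookkeeping rather than conceptual: the signed-area conventions must be chosen so that the self-intersecting traversals of both the 6-periodic and its inner polygon give a consistent orientation, and that orientation must agree with the sign of $(JL-4)^{3}/J^{4}$ appearing in the second form. Once the signs are reconciled, the remaining work is routine symbolic simplification that a CAS discharges without difficulty.
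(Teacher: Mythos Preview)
Your proposal follows the paper's declared template (Section~2: compute at an axis-symmetric configuration, simplify with CAS, then verify over the family), so the overall strategy matches. Two small points of friction are worth flagging. First, for the type~I family the appendix anchors the symmetric configuration at $P_{1}=(0,b)$, not $P_{1}=(a,0)$; starting at $(a,0)$ is still a legitimate Poncelet position, but the vertex formulas you will need are the ones listed there, and using them directly is simpler than re-deriving vertices from the reflection law. Second, your ``sanity check'' at $a/b=1/2$ is vacuous since the billiard has $a>b$; the relevant degeneration is $a/b=2$, where the type~I family ceases to exist and $(a-2b)$ vanishes, while the factor $(2a-b)$ never vanishes in the admissible range.

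Your route to the second equality via $JL-4$, $JL-12$, and Stachel's expression for $J$ is a slight elaboration beyond what the paper spells out (the paper simply asserts the $J,L$ form alongside the $a,b$ form), but it is the natural way to certify the identity and goes through exactly as you describe: one finds $J=1/(a-b)$ and $JL=4(a^{2}-ab+b^{2})/(a-b)^{2}$. Likewise, your plan to check invariance symbolically at a generic $u$ is stronger than the paper's step~(iv), which only verifies numerically; either suffices here since $k_{110}$ is already known to be invariant from \cite{reznik2020-invariants}.
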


\begin{proposition}
For $N=6$ type II, both $A$ and $A'$ vanish, and therefore $k_{106}=0$ and $k_{110}=0$.
\end{proposition}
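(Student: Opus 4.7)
The plan is to exhibit a half-period axial-reflection involution of the type II 6-periodic, namely $P_{i+3}=\sigma(P_i)$ where $\sigma$ is reflection across the minor axis of the EB, and to deduce $A=0$ and $A'=0$ as immediate consequences.

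Writing $P_i=(x_i,y_i)$ and $\sigma(x,y)=(-x,y)$, substituting $P_{i+3}=\sigma(P_i)$ into the shoelace formula
\[
2A=\sum_{i=1}^{6}(x_iy_{i+1}-x_{i+1}y_i)
\]
yields the pairing $T_i+T_{i+3}=0$ for $i=1,2,3$, where $T_i:=x_iy_{i+1}-x_{i+1}y_i$. Geometrically, the orbit traces its left half as the $\sigma$-image of its right half in the same index order, so the two halves carry opposite signed orientation. The outer polygon vertices $Q_i$ are intersections of the EB tangents at $P_i$ and $P_{i+1}$; since $\sigma$ is itself a symmetry of the EB, it commutes with the tangent-intersection construction, so $Q_{i+3}=\sigma(Q_i)$ and $A'=0$ follows by the same cancellation.

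To establish the half-period symmetry rigorously, I would use the explicit vertex parametrization of Appendix~\ref{app:vertices-caustics}: solve the closure condition for the semi-axes of the confocal hyperbolic caustic, parametrize $P_1$ by a scalar $u$, iterate the billiard map three times to obtain $P_4(u)$, and verify symbolically (via CAS, in the spirit of the proof methodology stated in Section~\ref{sec:invariants}) that $P_4(u)=\sigma(P_1(u))$; the companion relations $P_5=\sigma(P_2)$ and $P_6=\sigma(P_3)$ then follow from the commutation of the billiard map with $\sigma$. With $A=A'=0$ the conclusions $k_{106}=A'A=0$ and $k_{110}=AA''=0$ are immediate, independent of the value of $A''$.

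The main obstacle is that the type II caustic is only implicitly defined (its semi-axes are roots of a polynomial arising from the closure condition), so the intermediate algebraic expressions for $P_2,P_3,P_4$ are heavy and the identity $P_4=\sigma(P_1)$ is unlikely to be evident by inspection; a CAS verification is essentially indispensable. The doubled-up configuration in Figure~\ref{fig:n6-II} (dashed red) furnishes a useful sanity check, since at that configuration the three distinct points visited by the orbit already form a $\sigma$-symmetric triple, consistent with the asserted half-period involution.
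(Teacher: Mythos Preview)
Your overall strategy is sound and mirrors the paper's own one-line argument for the self-intersected $N=4$ case: a half-period reflectional symmetry $P_{i+3}=\rho(P_i)$ forces the shoelace terms to cancel in pairs, yielding $A=0$ (and likewise $A'=0$ since the outer-polygon construction commutes with any billiard symmetry). But two factual errors would make the execution fail as written. First, the relevant reflection is across the \emph{major} axis, $\tau:(x,y)\mapsto(x,-y)$, not the minor axis. The explicit type~II vertices in Appendix~\ref{app:6-periodic} have $P_1=[0,b]$, $P_4=[0,-b]$, $P_2=[k_x,k_y]$, $P_5=[k_x,-k_y]$, so $P_{i+3}=\tau(P_i)$, whereas $\sigma(P_1)=P_1\neq P_4$. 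Your proposed CAS check of $P_4(u)=\sigma(P_1(u))$ would therefore simply return \emph{false}. Second, the type~II caustic is \emph{not} ``only implicitly defined'': the same appendix gives explicit closed forms for $a''^2$ and $b''^2$, so the obstacle you anticipate is absent and the symbolic work is routine.

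The paper omits this proof; its stated general method computes $A$ and $A'$ at the single axis-symmetric configuration (where the $\tau$-symmetry is manifest), then invokes the known invariance of $k_{106}=A'A$ and $k_{110}=AA''$ to conclude these vanish everywhere, leaving the stronger assertion that $A$ and $A'$ themselves vanish for \emph{every} configuration to numerical verification (step~(iv)). Your route, once the axis is corrected, actually proves that stronger assertion directly, and there is a clean conceptual reason it works: on the invariant phase-space circle for this hyperbolic caustic, both $\tau$ and $T^3$ are fixed-point-free involutions commuting with the Poncelet rotation $T$ (which has order~$6$), hence both equal the half-turn of that circle, so $T^3=\tau$ for every orbit in the family, not just the axis-symmetric one.
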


\begin{observation}
Experimentally, $k_{804}$ is invariant for $N=6$ simple, and type I. However, it is variable for $N=6$ type II.
\label{obs:n6ii-804}
\end{observation}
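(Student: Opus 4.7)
The plan is to follow the standard computational method outlined in the subsection \emph{A word about our proof method}. For each of the three $N=6$ cases (simple, type I, and type II), I would parametrize a starting vertex $P_1(u) = (au, b\sqrt{1-u^2})$ on the billiard and use the explicit vertex and caustic formulas from Appendix~\ref{app:vertices-caustics} to express the remaining five vertices $P_2(u), \ldots, P_6(u)$ as rational (or rational-radical) functions of $u$, $a$, $b$. I would then form the $f_1$-inversive polygon by applying
\[
Q_i(u) \;=\; f_1 \;+\; \frac{P_i(u) - f_1}{|P_i(u) - f_1|^2}, \qquad f_1 = (-c,0),
\]
and compute the unsigned vertex angles via
\[
\cos\theta_{1,i}^\dagger(u) \;=\; \frac{(Q_{i-1}-Q_i)\cdot(Q_{i+1}-Q_i)}{|Q_{i-1}-Q_i|\,|Q_{i+1}-Q_i|}.
\]

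Next, I would assemble $S(u) = \sum_{i=1}^{6}\cos\theta_{1,i}^\dagger(u)$ in a CAS and simplify. For the simple and type I cases, I expect (by analogy with the $k_{102}$, $k_{104}$, $k_{106}$, $k_{110}$ derivations already carried out above) that after rationalizing the square roots coming from $\sqrt{1-u^2}$ and from the caustic semi-axes, $S(u)$ collapses to a function of $a,b$ alone; its explicit value could then be pinned down by evaluating at a distinguished configuration, such as the axis-symmetric position $u=1$ or the centrally symmetric one. For type II, the contrary assertion only requires producing two parameter values $u_1 \ne u_2$ in the admissible range for which $S(u_1) \ne S(u_2)$, either via an exact symbolic computation showing $\partial_u S \not\equiv 0$, or by high-precision evaluation at a handful of aspect ratios $a/b$, in line with the verification step of the stated proof method.

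The main obstacle is conceptual rather than computational: interior angles are not canonically defined for self-intersected polygons, and the inversive image of a type II 6-periodic is itself self-intersecting, so in principle the experimentally observed non-invariance could be an artifact of a branch or sign choice in the angle convention. Any honest proof must therefore fix a single convention (the unsigned-angle cosine above is the most natural, since it matches the one used implicitly for $k_{804}$ in \cite{reznik2020-invariants}) and then establish that, \emph{under this convention}, $S(u)$ is genuinely non-constant for type II while remaining constant in the other two cases. Settling this leaves the geometric reason for the dichotomy unexplained, consistent with the cautious wording of the observation.
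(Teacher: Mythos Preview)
The paper does not supply a proof for this statement: it is labeled an \emph{Observation} precisely because it records an experimental finding, and no argument (symbolic or otherwise) is given in the text. Your proposal therefore goes beyond what the paper actually does. The computational strategy you outline---parametrize by $u$, build the inversive polygon, assemble $S(u)$ in a CAS, and check constancy or non-constancy---is entirely in the spirit of the paper's general method and would, if carried through, upgrade the Observation to a Proposition. Your caveat about the angle convention for self-intersected inversive polygons is well taken; the paper does not address it, and indeed the Conclusion explicitly leaves open \emph{why} $k_{804}$ fails here, so no deeper explanation is expected of you either.
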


\subsection{Invariants for N=8}
\label{sec:invariants-n8}
Referring to Figure~\ref{fig:n8-c2t}:

\begin{figure}
    \centering
    \includegraphics[width=.66\textwidth]{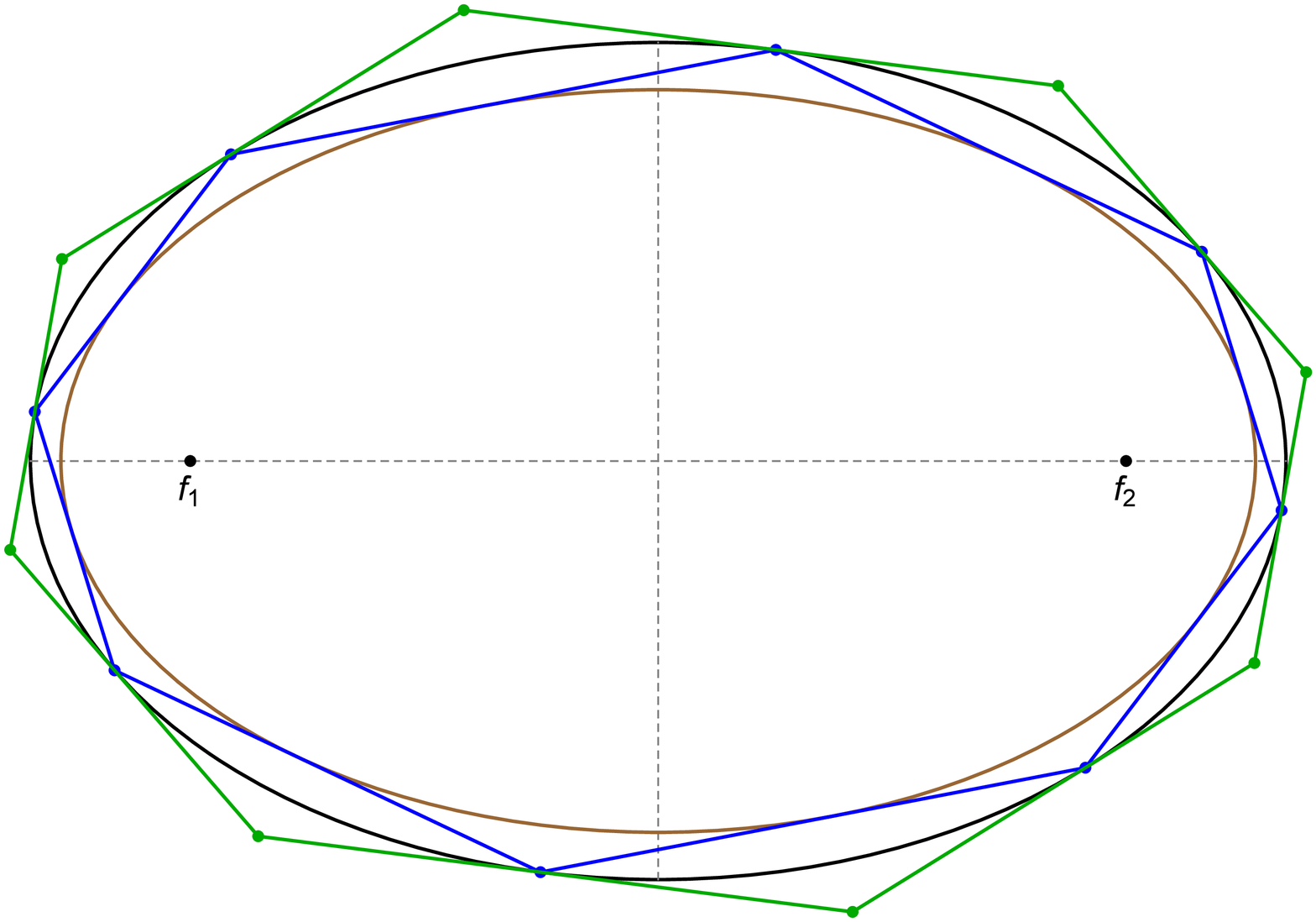}
    \caption{The outer polygon (green) to an 8-periodic has null sum of double cosines. \href{https://youtu.be/GEmV_U4eRIE}{Video}}
    \label{fig:n8-c2t}
\end{figure}

\begin{proposition}
For $N=8$, $k_{102} $ is given by $(1/2^{12})(J L-4)^2 (J L-12)^2$.
\label{thm:n8-102}
\end{proposition}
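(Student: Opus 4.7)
The plan is to follow the consistent proof method described in Section~2: pick an axis-symmetric 8-periodic as a canonical Poncelet representative, compute its vertices and outer polygon in closed form, evaluate $\prod_{i=1}^{8}\cos\theta'_i$ symbolically, and verify the result matches $(JL-4)^2(JL-12)^2/2^{12}$. Since $J$ and $L$ are both constant on the family, verification at a single orbit suffices.

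First I would obtain the caustic semi-axes $(a_c,b_c)$ for the $N=8$ family. These are confocal with the billiard, so $a^2-a_c^2=b^2-b_c^2=c^2$, and the Cayley--Poncelet closure condition for $N=8$ determines them as an explicit algebraic function of $(a,b)$, as recorded in Appendix~\ref{app:vertices-caustics} (in contrast to the irreducible sextic/duodecic cases of $N=5,7$). With $(a_c,b_c)$ in hand, set $P_1=(a,0)$; axial symmetry then reduces the independent vertex computation to the four upper-half-plane vertices, each obtained by intersecting the billiard with the appropriate common tangent to the caustic.

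Next I would construct the outer polygon. The tangent $t_i$ to the billiard at $P_i$ has equation $(P_{i,x}/a^2)\,x+(P_{i,y}/b^2)\,y=1$, and the outer vertices are $Q_i=t_i\cap t_{i+1}$. By the reflection property, the triangle $P_iP_{i+1}Q_i$ has angles $\theta_i/2$ at $P_i$ and $\theta_{i+1}/2$ at $P_{i+1}$, so
\[\cos\theta'_i=-\cos\!\left(\frac{\theta_i+\theta_{i+1}}{2}\right).\]
Forming the product $\prod_{i=1}^{8}\cos\theta'_i$ and substituting the closure condition yields a rational function of $a,b$ which should simplify to the claimed polynomial in $JL$. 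The expected factorization is strongly suggested by the pattern already observed for smaller even $N$: $k_{102}=(JL-4)^2/64$ for simple $N=6$ (Proposition~\ref{thm:n6k102}) and $k_{102}=(JL-8)^2(JL-4)^2/1024$ for self-intersected $N=6$ type II (Proposition~\ref{thm:n6k102II}); in each case, the repeated linear factors appear to track the vanishing of outer-polygon cosines at symmetry-privileged orbits, and the same mechanism should furnish the $(JL-4)^2(JL-12)^2$ factorization here.

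The hardest part will be the raw bulk of the symbolic computation: eight tangent intersections and eight cosines yield very large intermediate expressions, and coaxing CAS into the compact factorization $(JL-4)^2(JL-12)^2$ requires feeding it the correct auxiliary identities (the $N=8$ Poncelet closure relation, and $JL=8+k_{101}$). A prudent sanity check is to numerically evaluate both sides for several aspect ratios $a/b$ and several orbits within the family, confirming agreement before committing to full symbolic verification.
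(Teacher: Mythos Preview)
Your proposal is correct and follows essentially the same approach as the paper: the paper gives no explicit proof for this proposition, deferring instead to the ``consistent process'' of Section~2 (CAS evaluation on the axis-symmetric orbit with $P_1=(a,0)$, using the vertex data of Appendix~\ref{app:vertices-caustics}, together with the known invariance of $k_{102}$), and your plan is a faithful elaboration of exactly that procedure. Your half-angle identity $\cos\theta'_i=-\cos\bigl((\theta_i+\theta_{i+1})/2\bigr)$ is a helpful computational shortcut not spelled out in the paper; just be explicit that the reduction to a single orbit relies on the prior invariance of $k_{102}$ (Table~\ref{tab:invariants}), not merely on the constancy of $J$ and $L$.
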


\noindent Referring to Figure~\ref{fig:n8-c2t}:

\begin{proposition}
For $N=8$, $k_{104}=0$.
\label{thm:n8-104}
\end{proposition}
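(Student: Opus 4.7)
The plan is to exploit the known invariance of $k_{104}$ on each $N$-periodic family (Table~\ref{tab:invariants} and \cite{akopyan19_private_half_sines}), which reduces the claim to evaluating $\sum_{i=1}^{8}\cos(2\theta'_i)$ at a single convenient $8$-periodic. Following the strategy outlined in Section~\ref{sec:intro}, I would pick the doubly axis-symmetric configuration, with $P_1=(a,0)$ and $P_5=(-a,0)$; the remaining six vertices are then fixed by reflection across the two billiard axes, using the $N=8$ caustic semi-axes tabulated in Appendix~\ref{app:vertices-caustics}.

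Because the outer polygon inherits this bi-axial symmetry, its eight vertex angles reduce to at most four distinct values $\theta'_1,\theta'_2,\theta'_3,\theta'_4$, each realised twice, so that
\[
k_{104} \;=\; 2\sum_{i=1}^{4}\cos(2\theta'_i).
\]
Each outer vertex is the intersection of the billiard tangents at two consecutive $P_j,P_{j+1}$, hence $\cos\theta'_i$ is a rational function of $a,b$ and of the vertex coordinates; the substitution $\cos(2\theta'_i)=2\cos^2\theta'_i-1$ then yields four explicit symbolic expressions to be summed.

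The final step is to simplify that sum in a CAS, reducing modulo the polynomial relation satisfied by the $N=8$ caustic semi-axes, and to cross-check numerically across several aspect ratios $a/b$, exactly as was done for the earlier low-$N$ invariants in this section. I expect the main obstacle to be purely algebraic bulk: the $N=8$ caustic parameters solve a non-trivial polynomial, and the individual $\cos(2\theta'_i)$ are unwieldy. However, the fact that the target value is exactly $0$ -- rather than the elaborate closed forms obtained for $N=3,\ldots,6$ -- strongly suggests that the four summands pair up complementarily, say $\theta'_i+\theta'_{\sigma(i)}\equiv \tfrac{\pi}{2}\pmod{\pi}$, which would force $\cos(2\theta'_i)+\cos(2\theta'_{\sigma(i)})=0$ termwise. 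If such a pairing can be extracted directly from the billiard reflection law and the bi-axial symmetry of the chosen configuration, one obtains a short, manipulation-free proof; otherwise, the direct CAS route described above still carries through.
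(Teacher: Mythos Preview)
Your approach matches the paper's: invoke the known invariance of $k_{104}$ to reduce to a single configuration, take the axis-symmetric $8$-periodic with $P_1=(a,0)$, and verify the vanishing via CAS. The paper's proof is essentially a one-line version of what you describe; your symmetry reduction and the conjectured complementary pairing are extra refinements, but the core argument is the same.
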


\begin{proof}
Using the CAS, we checked that $k_{104}$ vanishes for an 8-periodic in the ``horizontal'' position, i.e., $P_1=(a,0)$. Since $k_{104}$ is invariant \cite{akopyan2020-invariants}, this completes the proof.
\end{proof}

\subsection{N=8 Self-Intersected}

There are 3 types of self-intersected 8-periodics \cite{birkhoff1927}, here called type I, II, and III. These correspond to trajectories with turning numbers of 0, 2, and 3, respectively. These are depicted in Figures~\ref{fig:n8-si-I}, \ref{fig:n8-si-II}, and \ref{fig:n8-si-III}.

\begin{observation}
The signed area of $N=8$ type I is zero.
\end{observation}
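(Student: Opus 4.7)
The plan is to combine the Poncelet-invariance of the signed area with a reflection-symmetry argument at a distinguished member of the family. The signed (shoelace) area $A = \tfrac12 \sum_{i=1}^{8}(x_i y_{i+1} - x_{i+1} y_i)$ is a winding-weighted functional that is constant across the one-parameter family of $N=8$ type~I periodics (by the same general Poncelet mechanism underlying the invariance of $L$, $J$, and the areas in Section~\ref{sec:invariants-n6}). So it suffices to evaluate $A$ at a single convenient representative.

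I would follow the paper's standard CAS-assisted recipe outlined in the Preliminaries: (i)~extract the semi-axes of the confocal caustic for $N=8$ type~I from the implicit conditions of Appendix~\ref{app:vertices-caustics}; (ii)~construct the axis-symmetric representative with $P_1 = (a,0)$ by iterating the billiard map from this start point; (iii)~evaluate the shoelace sum symbolically, and verify it simplifies to $0$; (iv)~corroborate numerically at several aspect ratios $a/b$. The underlying geometric reason for the vanishing is that at this configuration the vertex set is invariant under reflection $\sigma$ across the major axis. Because the turning number of type~I is $0$, the $\sigma$-image of the oriented polygon must have turning number $0$ as well, and a short case analysis on the cyclic reindexing induced by $\sigma$ (with $P_1 = (a,0)$ a fixed point) shows that $\sigma$ acts as an orientation-reversing involution $P_i \mapsto P_{2-i}$ on the vertex labels. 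The shoelace sum changes sign under any such orientation-reversing symmetry of the vertex set, hence $A = -A$, giving $A = 0$.

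The main obstacle is the case analysis identifying which reindexing $\sigma$ induces on the vertices — in principle one must rule out the possibility that $\sigma$ acts as a cyclic shift preserving orientation (which would impose an incompatible rotational symmetry on an axis-symmetric polygon, but this deserves checking). The cleanest way to close this gap is simply to read off the reindexing from the explicit coordinates produced in step~(ii); once the symmetry of the vertex $y$-coordinates is in hand, the orientation-reversing character of $\sigma$ is immediate. As a sanity check, the same symmetry pattern — a reflection that swaps the two lobes of a figure-eight-like trajectory with opposite winding — underlies the analogous vanishing claims for $N=6$ type~II used in the preceding subsection.
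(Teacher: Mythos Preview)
The paper states this as an \emph{Observation} and supplies no proof; in this paper's usage Observations are experimental findings (cf.\ Observations~\ref{obs:n4-804} and \ref{obs:n6ii-804}), so there is no argument of the paper's to compare against beyond implicit numerical verification.

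Your proposal has a genuine gap. You assert that the signed area $A$ is constant across the Poncelet family ``by the same general Poncelet mechanism underlying the invariance of $L$, $J$, and the areas in Section~\ref{sec:invariants-n6}.'' That is false in general: $A$ is \emph{not} a Poncelet invariant. For simple $4$-periodics the paper itself records (Appendix~\ref{app:4-periodic}) that $A$ varies between $4a^2b^2/(a^2+b^2)$ and $2ab$; what is invariant for even $N$ are products such as $k_{106}=A'A$ or $k_{110}=A\,A''$, never $A$ alone. Without invariance of $A$, evaluating $A=0$ at a single symmetric representative tells you nothing about the rest of the family, so your reduction step fails.

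The natural repair is the one the paper actually uses for the analogous $N=4$ self-intersected claim in Section~\ref{sec:n4-si}: argue that \emph{every} member of the $N=8$ type~I family is invariant under a reflection of the billiard, so that your orientation-reversing reindexing forces $A=-A$ configuration by configuration rather than only at an isolated one. Your symmetry analysis is then the right tool, but it must be applied orbit-by-orbit. As a secondary point, the paper's parametrization for $N=8$ type~I (Appendix~\ref{app:8-periodic}) places $P_1$ at the intersection of the hyperbolic caustic with the billiard, not at $(a,0)$; whether a type~I orbit even passes through $(a,0)$ should be checked rather than assumed.
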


Referring to Figure~\ref{fig:n8-si-III}, the following is related to the Poncelet Grid \cite{sergei07_grid} and the Hexagramma Mysticum \cite{baralic2015}:

\begin{observation}
The outer polygon to N=8 type III is inscribed in an ellipse. 
\end{observation}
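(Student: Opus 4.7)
My plan is to reduce the claim to a dualized Poncelet statement and then invoke Schwartz's Poncelet grid theorem together with a discriminant check. Under pole--polar duality with respect to the billiard ellipse $\mathcal{E}$, each vertex $Q_i = \ell_i \cap \ell_{i+1}$ of the outer polygon (where $\ell_i$ is the tangent to $\mathcal{E}$ at $P_i$, and indices are in orbit order for turning number $3$) is the pole of the chord $P_i P_{i+1}$. Since polarity sends conics to conics, the $Q_i$ are concyclic on a conic if and only if the eight orbit-consecutive chords $\{P_iP_{i+1}\}_{i=1}^{8}$ envelope a common conic.

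First, I would make this duality precise and recast the problem as an enveloping statement about the orbit-consecutive chord family. Second, I would apply Schwartz's Poncelet grid theorem: for the Poncelet $8$-periodic whose actual sides are $\sigma_i = P_iP_{i+3}$ (tangent to the type-III caustic), the grid theorem guarantees that for each $k$ the $\binom{8}{2}$ intersection points $\sigma_i\cap\sigma_{i+k}$ organize into concentric conics, with an associated dual family of envelope conics. Indexing carefully for turning number $3$, the chord $P_iP_{i+1}$ can be identified with a specific grid line in this configuration, and the grid theorem forces these chords to be tangent to a common member of the dual family. Dualizing back via the polarity yields the required conic through the eight $Q_i$.

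Third, I would verify ellipticity (as opposed to a hyperbola or parabola) by specializing to the axis-symmetric configuration with $P_1 = (a,0)$ using the explicit vertex formulas in Appendix~\ref{app:vertices-caustics}, writing down the five--point conic through $Q_1,\ldots,Q_5$, and checking that (i) the remaining $Q_6,Q_7,Q_8$ satisfy the same quadratic and (ii) the discriminant of its quadratic form is negative. Since Poncelet families are projectively rigid, verifying incidence and signature at one configuration extends to the whole family.

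The main obstacle I anticipate is the bookkeeping inherent to turning number $3$: matching the orbit cyclic order $P_1,P_2,\ldots,P_8$ to the Schwartz grid indices, and identifying the correct member of the dual grid-family that is enveloped by $\{P_iP_{i+1}\}$, is delicate because the orbit order is not the natural cyclic order around $\mathcal{E}$. A clean fallback is the direct algebraic approach: use the N=8 parametrization of Appendix~\ref{app:vertices-caustics} to write $Q_i = Q_i(u)$ for a one-parameter family, impose the $5\times 5$ conic-through-five-points determinant on $Q_1(u),\ldots,Q_5(u)$, and verify with a CAS that $Q_6(u),Q_7(u),Q_8(u)$ annihilate the resulting ternary quadratic identically in $u$ and in $a/b$, with negative discriminant.
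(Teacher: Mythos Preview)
The paper does not prove this statement; it is recorded as an experimental Observation, with a pointer to the Poncelet grid literature and the Hexagramma Mysticum. So there is no paper-side argument to compare against.

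Your duality reduction in the first paragraph is correct and, in fact, already finishes the job---the subsequent detour through Schwartz's grid theorem is both unnecessary and contains an indexing slip. In your own setup the $P_i$ are listed in \emph{orbit order}, so the billiard sides are $P_iP_{i+1}$, not $P_iP_{i+3}$; the step-by-$3$ description would only be correct had you indexed the vertices by their cyclic position around $\mathcal{E}$. Once this is straightened out, the chords $\{P_iP_{i+1}\}_{i=1}^{8}$ are precisely the orbit sides, and by the very definition of a billiard trajectory they are all tangent to the (type-III, elliptic) caustic $\mathcal{C}$. Their poles $Q_i$ with respect to $\mathcal{E}$ therefore lie on the polar conic of $\mathcal{C}$ with respect to $\mathcal{E}$---no grid theorem required. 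For the confocal pair $\mathcal{E}:x^2/a^2+y^2/b^2=1$ and $\mathcal{C}:x^2/a''^2+y^2/b''^2=1$, the pole (with respect to $\mathcal{E}$) of the tangent to $\mathcal{C}$ at $(a''\cos t,\,b''\sin t)$ is $\bigl(\tfrac{a^2}{a''}\cos t,\,\tfrac{b^2}{b''}\sin t\bigr)$, which parametrizes the concentric, axis-aligned ellipse with semi-axes $a^2/a''$ and $b^2/b''$. This yields ellipticity directly, with no discriminant computation or CAS fallback, and it also explains why the \emph{same} ellipse carries the outer polygon for every member of the family (the caustic, and hence its polar conic, is fixed). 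Your plan is salvageable, but you should drop Step~2 entirely and recognize that Step~1 closes the argument by itself.
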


\begin{figure}
    \centering
    \includegraphics[width=\textwidth]{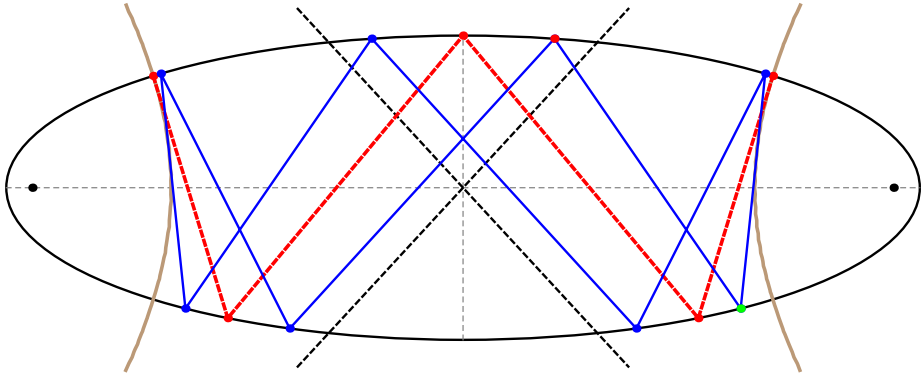}
    \caption{Self-intersecting 8-periodic of type I (blue) and its doubled-up configuration (dashed red) in $a/b=3$ ellipse. Trajectory segments are tangent to a confocal hyperbolic caustic (brown). \href{https://youtu.be/5Lt9atsZhRs}{Video}}
    \label{fig:n8-si-I}
\end{figure}

\begin{figure}
    \centering
    \includegraphics[width=\textwidth]{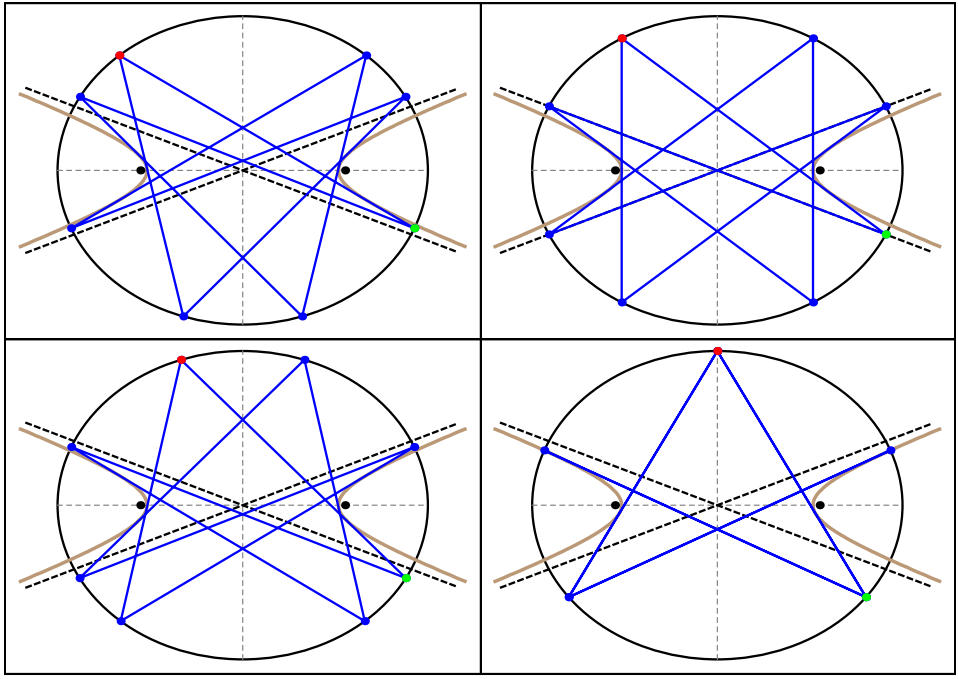}
    \caption{Four positions of a type II self-intersecting 8-periodic (blue) in an $a/b=1.2$ elliptic billiard, at four different locations of a starting vertex (red). In general position, these have turning number 2. Also shown is confocal hyperbolic caustic (brown). \href{https://youtu.be/JwD_w5ecPYs}{Video}}
    \label{fig:n8-si-II}
\end{figure}

\begin{figure}
    \centering
    \includegraphics[width=\textwidth]{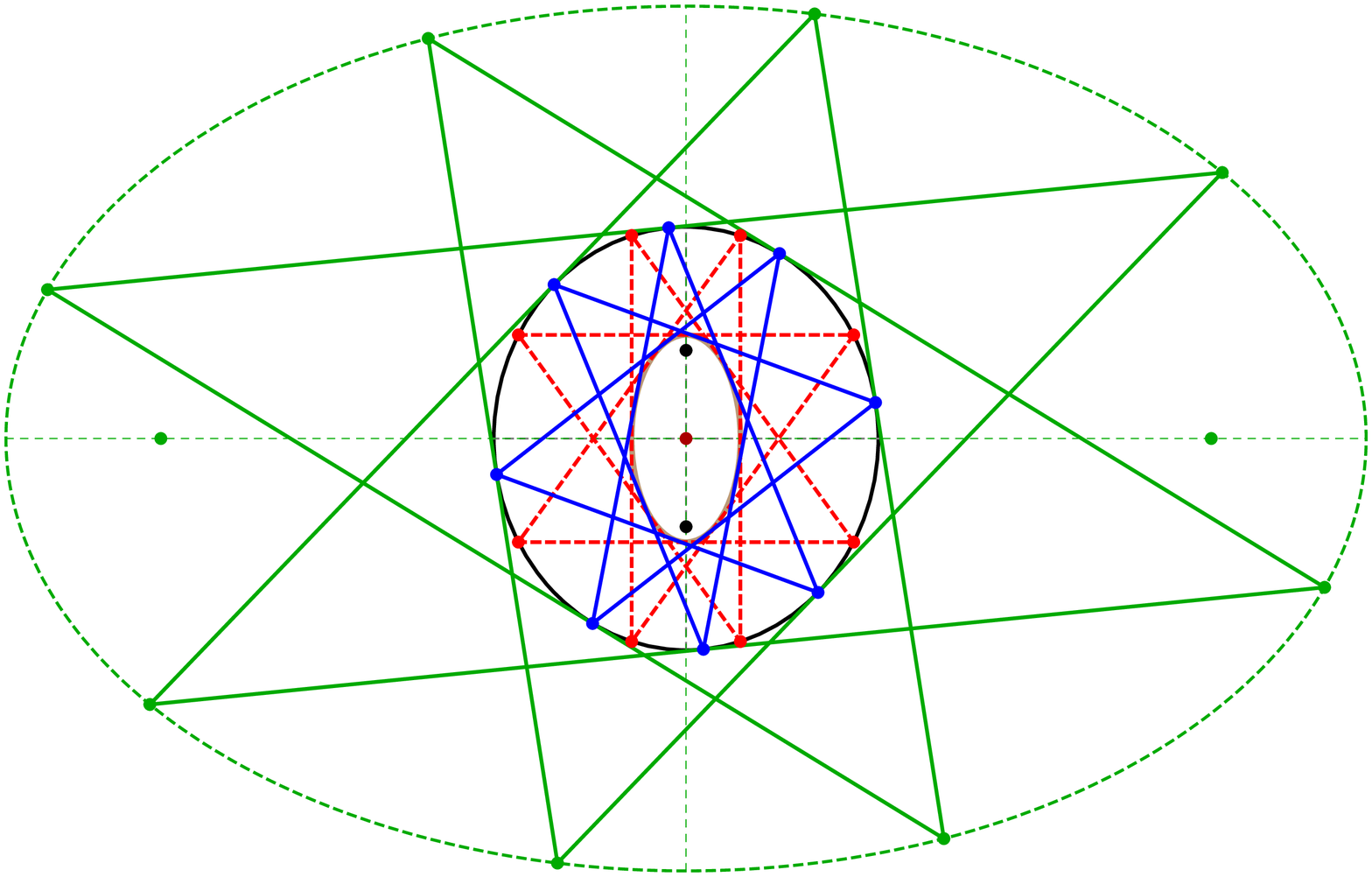}
    \caption{Self-intersecting 8-periodic of type III (blue) and its doubled-up configuration (dashed red) in an $a/b=1.1$ billiard ellipse (shown rotated by $90^\circ$ to save space). The turning number is 3. The confocal caustic is an ellipse (brown). Also shown is the outer polygon (green) whose vertices are inscribed in an axis-aligned, concentric ellipse (dashed green), a result related to \cite{baralic2015}. \href{https://youtu.be/93xpGnDxyi0}{Video}}
    \label{fig:n8-si-III}
\end{figure}

\section{Conclusion}
\label{sec:conclusion}
The sum of focus-inversive cosines ($k_{804}$) is  invariant in all N-periodics so far studied, excepting $N=4$ simple and $N=6$ type II; see Observations~\ref{obs:n4-804} and \ref{obs:n6ii-804}. Notice that for the former
the area of simple 4-periodics is non-zero while the sum of cosines vanishes; in the latter case the situation reverses: the area of type II 6-periodics vanishes and the sum of cosines is non-zero. Could either vanishing quantity be the reason $k_{804}$ becomes variable, e.g., is this  introducing a pole in the meromorphic functions on the elliptic curve \cite{akopyan2020-invariants}?

\section{Videos}
\label{sec:videos}
Animations illustrating some of the above phenomena are listed on Table~\ref{tab:playlist}.

\begin{table}
\small
\begin{tabular}{|c|l|l|}
\hline
id & Title & \textbf{youtu.be/<...>}\\
\hline
01 & {N=3--6 orbits and caustics} & 
\href{https://youtu.be/Y3q35DObfZU}{\texttt{Y3q35DObfZU}}\\
02 & {N=5 with inner and outer polygons} & \href{https://youtu.be/PRkhrUNTXd8}{\texttt{PRkhrUNTXd8}} \\
03 & {N=6 zero-area antipedal @ $a/b=2$} &
\href{https://youtu.be/HMhZW_kWLGw}{\texttt{HMhZW\_kWLGw}} \\
04 & {N=8 outer poly's null sum of double cosines} &
\href{https://youtu.be/GEmV_U4eRIE}{\texttt{GEmV\_U4eRIE}}\\
\hline
05 & {N=4 self-int. and its outer polygon} & \href{https://youtu.be/C8W2e6ftfOw}{\texttt{C8W2e6ftfOw}} \\
06 & {N=4 self-int. vertices concyclic w/ foci} &
\href{https://youtu.be/207Ta31Pl9I}{\texttt{207Ta31Pl9I}} \\
07 & {N=4 self-int. vertices and outer concyclic w/ foci} &
\href{https://youtu.be/4g-JBshX10U}{\texttt{4g-JBshX10U}} \\
08 & {N=4 self-int. collinear segment midpoints and 8-shaped locus} &
\href{https://youtu.be/GZCrek7RTpQ}{\texttt{GZCrek7RTpQ}} \\
09 & {N=5 self-int. (pentagram)} & 
\href{https://youtu.be/ECe4DptduJY}{\texttt{ECe4DptduJY}} \\
10 & {N=6 self-int. type I} & 
\href{https://youtu.be/fOD85MNrmdQ}{\texttt{fOD85MNrmdQ}} \\
11 & {N=6 self-int. type II} & 
\href{https://youtu.be/gQ-FbSq7wWY}{\texttt{gQ-FbSq7wWY}}\\
12 & {N=7 self-int. type I and II} &
\href{https://youtu.be/yzBG8rgPUP4}{\texttt{yzBG8rgPUP4}}\\
13 & {N=8 self-int. type I} &
\href{https://youtu.be/5Lt9atsZhRs}{\texttt{5Lt9atsZhRs}} \\
14 & {N=8 self-int. type II} &
\href{https://youtu.be/93xpGnDxyi0}{\texttt{3xpGnDxyi0}} \\
15 & {N=8 self-int. type III} &
\href{https://youtu.be/JwD_w5ecPYs}{\texttt{JwD\_w5ecPYs}} \\
\hline
16 & {N=3 inversives rigidly-moving circumbilliard} & 
\href{https://youtu.be/LOJK5izTctI}{\texttt{LOJK5izTctI}}\\
17 & {N=3 inversive: invariant area product} & 
\href{https://youtu.be/0L2uMk2xyKk}{\texttt{0L2uMk2xyKk}}\\
18 & {N=5 inversives: invariant area product} &
\href{https://youtu.be/bTkbdEPNUOY}{bTkbdEPNUOY} \\
19 & {N=5 self-int. inversive: invariant perimenter} &
\href{https://youtu.be/LuLtbwkfSbc}{\texttt{LuLtbwkfSbc}}\\
20 & {N=5 and outer inversives: invariant area ratio} &
\href{https://youtu.be/eG4UCgMkKl8}{\texttt{eG4UCgMkKl8}} \\
21 & {N=7 self-int. type I inversives: invariant area product} & \href{https://youtu.be/BRQ39O9ogNE}{\texttt{BRQ39O9ogNE}} \\
\hline
\end{tabular}
\caption{Videos illustrating some concepts in the article. The last column is clickable and/or provides the YouTube code.}
\label{tab:playlist}
\end{table}

\section*{Acknowledgments}
We would like to thank Jair Koiller, Arseniy Akopyan, Richard Schwartz, Sergei Tabachnikov, Pedro Roitman, and Hellmuth Stachel for useful insights.

The first author is fellow of CNPq and coordinator of Project PRONEX/ CNPq/ FAPEG 2017 10 26 7000 508.

\appendix

\section{Review: Elliptic Billiard}
\label{app:invariants}
Joachimsthal's Integral expresses that every trajectory segment is tangent to a confocal caustic \cite{sergei91}. Equivalently, a positive quantity $J$ remains invariant at every bounce point $P_i=(x_i,y_i)$:

\begin{equation*}
 J=\frac{1}{2}\nabla{f_i}.\hat{v}=\frac{1}{2}|\nabla{f_i}|\cos\alpha
\end{equation*}

\noindent where $\hat{v}$ is the unit incoming (or outgoing) velocity vector, and:

\begin{equation*}
\nabla{f_i}=2\left(\frac{x_i}{a^2}\,,\frac{y_i}{b^2}\right).
\label{eqn:fnable}
\end{equation*}

Hellmuth Stachel contributed \cite{stachel2020-private} an elegant expression for Joahmisthal's constant $J$ in terms of EB semiaxes $a,b$ and the major semiaxes $a''$ of the caustic:

\begin{equation*}
    J = \frac{\sqrt{a^2 - a''^2}}{{a}{b}}
\end{equation*}

Let $\kappa_i$ denote the curvature of the EB at $P_i$ given by \cite[Ellipse]{mw}:

\begin{equation}
\kappa = \frac{1}{a^2 b^2} \left(\frac{x^2}{a^4}+\frac{y^2}{b^4}\right)^{-3/2}
\label{eqn:curv}
\end{equation}

\noindent The signed area of a polygon is given by the following sum of cross-products \cite{preparata1988}:

\[  A=\frac{1}{2}\sum_{i=1}^N
{(P_{i+1}-P_{i})\times(P_i-P_{i+1})} \]

Let $d_{j,i}$ be the distance $|P_i-f_j|$. The inversion $P_{j,i}^\dagger$ of vertex $P_i$ with respect to a circle of radius $\rho$ centered on $f_j$ is given by:

\[ P_{j,i}^\dagger=f_j+ \left(\frac{\rho}{d_{j,i}}\right)^2 (P_i-f_j)\]

The following closed-form expression for $k_{119}$ for all $N$ was contributed by H. Stachel \cite{stachel2020-private}:

\begin{equation}
\sum_{i=1}^N{\kappa_i^{2/3}} = L/[2 J (a b)^{4/3}]
\label{eqn:k119}
\end{equation}

\section{Vertices \& Caustics N=3,4,5,6,8}
\label{app:vertices-caustics}
The four intersections of an ellipse with semi-axes $a,b$ with a confocal hyperbola with semi-axes $a'',b''$ are given by:

\subsection{N=3 Vertices \& Caustic}
\label{app:3-periodic}
Let $P_i=(x_i,y_i)/q_i$, $i=1,2,3$, denote the 3-periodic vertices, given by \cite{garcia2019-incenter}:

\begin{align*}
q_1&=1\\
x_{2}&=-{b}^{4} \left(  \left(   a^2+{b}^{2}\right)k_1 -{a}^{2}  \right) x_1^{3}-2\,{a}^{4}{b}^{2} k_2  x_1^{2}{y_1}\\
&+{a}^{4} \left(  ({a
}^{2}-3\, {b}^{2})k_1+{b}^{2}
 \right) {x_1}\,y_1^{2}-2{a}^{6} k_2 y_1^{3}\\
y_{2}&= 2{b}^{6} k_2 x_1^{3}+{b}^{4}\left(  ({b
 }^{2}-3\, {a}^{2}) k_1  +{a}^{2}
  \right) x_1^{2}{y_1}\\
&+  2\,{a}^{2} {b}^{4}k_2 {x_1} y_1^{2} -{
a}^{4}  \left(  \left(   a^2+{b}^{2}\right)k_1  -{b}^{2}  \right)  y_1^{3}
\\
q_2&={b}^{4} \left( a^2-c^2k_1   \right)
x_1^{2}+{a}^{4} \left(  {b}^{2}+c^2k_1  
 \right) y_1^{2} - 2\, {a}^{2}{b}^{2}{c^2}k_2 {x_1}\,{
y_1} \\
x_{3}&= {b}^{4} \left( {a}^{2}- \left( {b}^{2}+{a}^{2} \right) \right)
 k_1  x_1^{3} +2\,{a}^{4}{b}^{2}k_2  x_1^{2}{ y_1}\\
 &+{a}^{4} \left( 
  k_1 \left( {a}^{2}-3\,{b}^{2}
 \right) +{b}^{2} \right) { x_1}\, y_1^{2} +2\, {a}^{6} k_2 y_1^{3}
\\
y_{3}&= -2\, {b}^{6} k_2 x_1^{3}+{b}^{4} \left( {a}^{2}+ \left( {b}^{2}-3\,{a}^{2} \right)    k_1 \right) {{ x_1}}^{2}{ y_1}
\\
& -2\,{a}^{2}  {b}^{4} k_2  x_1 y_1^{2}+
 {a}^{4} \left( {b}^{2}- \left( {b}^{2}+{a}^{2} \right)   k_1 \right)\,  y_1^{3},
\\
q_3&= {b}^{4} \left( {a}^{2}-{c^2}k_1   \right) x_1^{2}+{a}^{4} \left( {b}^{2}+c^2k_1  \right)  y_1^{2}+2\,{a}^{2}{b}^{
2} c^2 k_2\, { x_1}\,{ y_1}.
\end{align*}

\noindent where:

\begin{align*}
k_1&=\frac{d_1^2\delta_1^2}{\,d_2}=\cos^2{\alpha},\\
k_2&=\frac{ \delta_1d_1^2}{d_2 }\sqrt{ d_2 -d_1^4\delta_1^2}=\sin{\alpha}\cos\alpha\\
c^2&=a^2-b^2,\;\; d_1=(a\,b/c)^2,\;\;d_2={b}^{4}x_1^2 +{a}^{4}y_1^2\\
\delta&=\sqrt{a^4+b^4-a^2 b^2},\;\;\delta_1=\sqrt{2 \delta-a^2-b^2}
\end{align*}

\noindent where $\alpha$, though not used here, is the angle of segment $P_1 P_2$ (and $P_1 P_3$) with respect to the normal at $P_1$.

\noindent The caustic is the ellipse:

\[\frac{x^2}{a''^2}+\frac{y^2}{b''^2}-1=0, \;\; a''=\frac{a( \delta-b^2)}{{a^2-b^2}}, \;\; b''=\frac{b(a^2-\delta)}{{a^2-b^2}}  \]

\subsection{N=4 Vertices \& Caustic}
\label{app:4-periodic}
\subsection{Simple}

The vertices of the 4-periodic orbit are given by:

\begin{align*}
   P_1=&(x_1,y_1),\;\;\;  P_2=\left(-\frac{ a^4y_1}{\sqrt{b^6x_1^2+a^6y_1^2}},\frac{ b^4x_1}{\sqrt{ b^6x_1^2+a^6y_1^2}} \right) \\
   P_3=&-P_1,\;\;\;\;\;\;\;\; P_4=  -P_2
\end{align*}

\noindent The caustic is the ellipse:

\[\frac{x^2}{a''^2}+\frac{y^2}{b''^2}-1=0, \;\; a''=\frac{a^2}{\sqrt{a^2+b^2}}, \;\; b''=\frac{b^2}{\sqrt{a^2+b^2}}  \]

\noindent The area and its bounds are given by:

\[A=\frac{2({b}^{4} x_1^2+{a}^{4}y_1^2 ) }{\sqrt {{b}^{6}x_1^2+ {a}^{6}y_1^2}}, \;\;\;\frac{4a^2b^2}{a^2+b^2}\leq A\leq 2ab\]

 The minimum (resp. maximum) area is achieved when the orbit is a  rectangle with $P_1=(x_1,b^2x_1/a^2)$ (resp. rhombus with $P_1=(a,0)$).

The perimeter is given by:

\[  L=4\sqrt{a^2+b^2}  \]

The exit angle $\alpha$ required to close the trajectory from a departing position $(x_1,y_1)$ on the billiard boundary is given by:

\begin{align*}
\cos{\alpha}=&{\frac {{a}^{2}b}{\sqrt {{a}^{2}+{b}^{2}}\sqrt {   {a}^{4}-c^2x_1^2}}}
\end{align*}

\subsubsection{Self-intersected}

When $a/b>\sqrt{2}$, the vertices of the 4-periodic self-intersecting orbit are given by:

\begin{align*}
    P_1 &= \left[au,b\sqrt{1-u^2}\right], \;\;\;   P_3  = \left[-au,b\sqrt{1-u^2}\right] \\
   P_2& =\left[-{\frac {a \sqrt{ {a}^{2}(a^2-2\, {b}^{2}) -   c^{4}{u}^
{2}} }{c^2\sqrt {1-{u}^{2} }  }},-{\frac {{b}^{3}}{c^2\sqrt {1-{u}^{2} }   }}\right]\\
 P_4& =\left[{\frac {a\sqrt{ {a}^{2}(a^2-2\, {b}^{2}) -   c^{4}{u}^
{2}} }{c^2\sqrt {1-{u}^{2} }  }},-{\frac {{b}^{3}}{c^2\sqrt {1-{u}^{2} }  }}\right]
\end{align*}

\noindent where $|u|{\leq}\frac{a}{c^2}
\sqrt {{a}^{2}-2\,{b}^{2}}$.

The confocal hyperbolic caustic is given by:

\[ \frac{x^2}{a''^2}-\frac{y^2}{b''^2}=1, \;\; a''= \frac{a\sqrt{a^2-2b^2}}{c}  \;\; b''=   \frac{ b^2}{c}\]

\noindent The four intersections of an ellipse with semi-axes $a,b$ with  confocal hyperbola with axes $a'',b''$ are given by:

\begin{equation}
\left[\pm  \frac {a a''}{c},\pm  \frac{b b''}{c} \right]
\label{eqn:ell-hyp-confocal}
\end{equation}

 
The exit angle $\alpha$ required to close the trajectory from a departing position $(x_1,y_1)$ on the billiard boundary is given by:
 
\begin{align*}
\cos{\alpha}=&
 \frac{a^2b}{ c \sqrt{a^4-c^2 x_1^2}}
\end{align*}

The perimeter of the orbit is 
$L=4a^2/c$.

\subsection{N=5 Vertices \& Caustic}
\label{app:5-periodic}
Let $a>b$ be the semi-axes of the elliptic billiard.

\begin{proposition}
The major semiaxis length $a''$ of the caustic for $N=5$ simple (resp. self-intersecting, i.e., pentagram) is given by
the 
root of the largest (resp. smallest) real root $x\in(0,a)$ of the following bi-sextic polynomial:

\begin{align*}
P_5(x)& = \;  {c}^{12}{x}^{12}-2\,{c}^{4}{a}^{2} \left( 3\,{a}^{8}-9\,{a}^{6}{b}^{2}
+31\,{a}^{4}{b}^{4}+{a}^{2}{b}^{6}+6\,{b}^{8} \right) {x}^{10}\\
&+{c}^{4}
{a}^{4} \left( 15\,{a}^{8}-30\,{a}^{6}{b}^{2}+191\,{a}^{4}{b}^{4}+16\,
{a}^{2}{b}^{6}+16\,{b}^{8} \right) {x}^{8}\\
&-4\,{c}^{4}{a}^{10} \left( 5
\,{a}^{4}-5\,{a}^{2}{b}^{2}+66\,{b}^{4} \right) {x}^{6}\\
& +{a}^{12}
 \left( 15\,{a}^{8}-30\,{a}^{6}{b}^{2}+191\,{a}^{4}{b}^{4}-368\,{a}^{2
}{b}^{6}+208\,{b}^{8} \right) {x}^{4}\\
&-2\,{a}^{14} \left( 3\,{a}^{8}-3
\,{a}^{6}{b}^{2}+22\,{a}^{4}{b}^{4}-48\,{a}^{2}{b}^{6}+32\,{b}^{8}
 \right) {x^2}+{a}^{24}
\end{align*}
\end{proposition}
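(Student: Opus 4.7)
The plan is to impose the $N{=}5$ Poncelet closure on a symmetric orbit based at $P_1=(a,0)$ and then eliminate all auxiliary variables to reach a univariate polynomial in $a''$. Because the elliptic billiard and every confocal caustic are invariant under $y\mapsto -y$, a 5-periodic containing $P_1=(a,0)$ inherits that reflection symmetry; cyclically indexing its vertices gives $P_5=\overline{P_2}$ and $P_4=\overline{P_3}$, so the chord $P_3P_4$ is vertical. A vertical chord tangent to the caustic $x^2/a''^2+y^2/b''^2=1$ must lie on one of the vertical tangent lines $x=\epsilon a''$, with $\epsilon\in\{+1,-1\}$. Taking $\epsilon=-1$ produces the simple 5-gon (winding number $1$) and $\epsilon=+1$ the pentagram (winding number $2$); in either case $P_3=(\epsilon a'',\,b\sqrt{1-a''^2/a^2})$.

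With $P_2=(a\cos\phi,b\sin\phi)$ the only remaining degree of freedom, exactly two Poncelet conditions are left: the chord $P_1P_2$ is tangent to the caustic, and the chord $P_2P_3$ is tangent to the caustic. Each is a polynomial identity in the endpoint coordinates and the caustic semi-axes, produced for instance by parametrizing the chord linearly, substituting into the caustic equation, and setting the discriminant of the resulting quadratic to zero. Together with $\cos^2\phi+\sin^2\phi=1$ and the confocality relation $b''^2=a''^2-c^2$, these give a closed polynomial system in the unknowns $(\cos\phi,\sin\phi,a'')$.

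I would feed this system to a CAS and form the resultant eliminating $\cos\phi$ and $\sin\phi$. Because the caustic equation depends only on $a''^2$, the output is automatically a polynomial in $a''^2$---the ``bi-sextic'' in $a''$ of total degree $12$ claimed in the statement. Both sign choices $\epsilon=\pm 1$ contribute their respective roots to the same polynomial, consistent with $P_5$ admitting two real roots in $(c,a)$. To label which root is which, I would use continuity in the circular limit $a/b\to 1$: there the simple pentagon has inscribed-circle radius $a\cos(\pi/5)$ while the pentagram has inscribed-circle radius $a\cos(2\pi/5)$, so the simple family corresponds to the largest real root of $P_5$ in $(0,a)$ and the pentagram to the smallest, matching the statement.

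The main obstacle is precisely this elimination and the subsequent identification with the polynomial displayed. The two tangency equations are quadratic in both $\cos\phi$ and $\sin\phi$, so a direct resultant is of very high degree and typically carries extraneous factors from degenerate configurations (for example $P_2\in\{P_1,\overline{P_1}\}$, or chords collapsing onto the major axis). Isolating the correct irreducible component and verifying algebraically that it matches the long expression given for $P_5(x)$ is the step that is not clean-form but rather requires careful symbolic factorization and numerical cross-checks over several aspect ratios, as sketched in the paper's remark on the proof method.
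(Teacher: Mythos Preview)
Your argument is sound, but it takes a different route from the paper. The paper obtains $P_5$ in one stroke as the Cayley closure condition for $N=5$ on a confocal pair (citing Dragovi\'c--Radnovi\'c), rather than by constructing a symmetric orbit and eliminating the auxiliary vertex. This bypasses entirely the resultant computation and the attendant spurious-factor cleanup that you flag as the main obstacle. On the other hand, your approach is more self-contained: it does not import the Cayley machinery and makes the geometric origin of the two roots (the sign $\epsilon$ of the vertical tangent) explicit from the start, whereas the paper only separates them a posteriori. For the root labelling both arguments ultimately appeal to the circular limit; your values $a\cos(\pi/5)$ and $a\cos(2\pi/5)$ are exactly the paper's $(\sqrt{5}+1)a/4$ and $(\sqrt{5}-1)a/4$, and the paper additionally checks the degenerate limit $b\to 0$ before invoking continuity.
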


\begin{proof}
 Consider a 5-periodic with vertices $P_i,i=1,...,5$ where $P_1$ is at $(a,0)$, i.e., the orbit is ``horizontal''. 
The polynomial $P_5$ is exactly the Cayley condition for the existence of 5-periodic orbits, see \cite{dragovic11}.
For $c=0$ the roots are $a_2^\prime=(\sqrt{5}-1)a/4$ and
$a_2^{\prime\prime}=(\sqrt{5}+1)a/4$ and corresponds to the regular case. For $b=0$ the roots are coincident in given by $x=a$. By analytic continuation, for $c\in (0,a)$, the two roots are in the interval $(0,a)$.
\end{proof}

For $N=5$ non-intersecting, the abcissae of vertices $P_2=(x_2,y_2)$, $P_3=(x_3,y_3)$ are given by the smallest positive solution (resp. unique negative) of the following equations:

\begin{align*}
    x_2:\;\;& {c}^{6}x_2^{6}-2\,a \left( 2\,{a}^{2}-{b}^{2} \right) {c}^{4}x_2^{5}+{a}^{2} \left( 5\,{a}^{2}+4\,{b}^{2} \right) {c}^{4}x_2^{4}-8\,{a}^{5}{b}^{2}{c}^{2}x_2^{3}\\&-{a}^{8} \left( 5\,
{a}^{2}-9\,{b}^{2} \right) x_2^{2}+2\,{a}^{9} \left( 2\,{a}^{2}
-{b}^{2} \right) {  x_2}-{a}^{12}
 = 0\\
  x_3:\;\;&{c}^{6}x_3^{6}-2\,a{b}^{2}{c}^{2} \left( 3\,{a}^{2}+{b}^{2}
 \right) x_3^{5} -{a}^{2} {c}^{2} \left( 3\,{a}^{4}-3\,{a}^{2}{b}^{2}+4
\,{b}^{4} \right) x_3^{4}+12\,a^5{b}^{2}{c}^{2}x_3^{
3} \\
& + {a}^{6}\left( 3\,{a}^{4}-3\,{a}^{2}{b}^{2}+4\,{b}^{4} \right) x_3^{2}-2\,{a}^{7}{b}^{2} \left( 3\,{a}^{2}-4\,{b}^{2}
 \right) {  x_3}-{a}^{12}
  =0
\end{align*}

The Joachimstall invariant $J$ of the simple orbit is given by the small positive root of 
\begin{align*}
& \;\;\; \; 4096 c^{12}J^{12}+2048  (3  a^2+b^2)  (a^2+3  b^2)  (a^2+b^2)  c^4  J^{10}\\
&-256  (29  a^4+54  a^2  b^2+29  b^4)  c^4  J^8+2304  (a^2+b^2) c^4  J^6\\
&-16  (3  a^2-4  a  b-3  b^2)  (3  a^2+4  a  b-3  b^2)  J^4-40(a^2+b^2)  J^2+5=0
\end{align*}
The perimeter of the simple orbit is given by
\begin{align*}
 L &=\frac{p}{q}\\
p&= \left( 1024\, \left( {a}^{2}+{b}^{2} \right) c^4{b}^{2}J^{7}-256\,c^4{b}^{2}J^{5}-64\,
 \left( {a}^{2}+{b}^{2} \right) {b}^{2}J^{3}+16\,J\,{b
}^{2} \right) \sqrt{1-4a^2J^2} \\
-&1024\, c^2
 \left( 5\,{a}^{4}+2\,{a}^{2}{b}^{2}+{b}^{4} \right) {b}^{2}J^{7}+256\,c^2 \left( 3\,{a}^{2}+{
b}^{2} \right) {b}^{2}J^{5}+64\, c^2 {b}^{2}J^{3}+16\,J\,{b}^{2}\\
q&= 
 256\, c^8J^{8}-
256\, c^2 \left( {a}^{2}+{b}^{2}
 \right)^{2}J^{6}+32\, c^2  \left( 3\,{a}^{2}+5\,{b}^{2} \right) J^{4}-16\,
c^2 J^{2}+1
\end{align*}


\subsection{N=6 Vertices \& Caustic}
\label{app:6-periodic}
\subsubsection{Simple}
Vertices $P_i,i=2,...,6$ with $P_1=(a,0)$ are given by: 
 
\begin{align*}
 P_4&=[-a,0]\\
 P_2&=[k_x,k_y],\;\;P_5=-P_2\\
 P_3&=[-k_x,k_y],\;\;P_6=-P_3\\
 k_x&=\frac{a^2}{a+b},\;\;k_y=\frac{b\sqrt{b (2\,a+b)}}{a+b}\\
\end{align*}

\noindent The confocal, elliptic caustic is given by:

\[ \frac{x^2}{a''^2} +\frac{y^2}{b''^2} =1,\;\;\;a''= \frac {a\sqrt {{a}(a+2\,b)}}{a+b}, b''= {\frac {b\sqrt {b \left( 2\,a+b \right) }}{a+b}}\]

\noindent The perimeter is given by:

\[ L={\frac {4({a}^{2}+ab+{b}^{2})}{a+b}}\]

\subsubsection{Self-Intersected (type I)}

This orbit only exists for $a>2b$. Vertices $P_i,i=2,...,6$ with $P_1=(0,b)$ are given by: 

\begin{align*}
 P_4&=[0,-b]\\
 P_2&=[k_x,k_y],\;\;P_5=-P_2\\
 P_3&=[k_x,-k_y],\;\;P_6=-P_3\\
 k_x&={\frac {a\sqrt {a \left( a-2\,b \right) }}{b-a}},\;\;k_y= {\frac {{b}^{2}}{b-a}}\\
\end{align*}

\noindent The confocal, hyperbolic caustic is given by:

\[ \frac{x^2}{a''^2} -\frac{y^2}{b''^2} =1,\;\;\;a''= \frac{{a}^{3/2}\sqrt{a-2\,b}}{a-b},\; b''= \frac {{b}^{3/2}\sqrt {2\,a-
b}}{a-b}\]

\noindent The 4 intersections of the above caustic with the EB are given by \eqref{eqn:ell-hyp-confocal}. 


\noindent The perimeter is given by:

\[ L=\frac {4({a}^{2}-ab+{b}^{2})}{a-b}\]

\subsubsection{Self-Intersected (type II)}

This orbit only exists for  $ a >\frac{2b\sqrt{3}}3$. Vertices $P_i,i=2,...,6$ with $P_1=[0,b]$ are given by:  
  are given by:
  \begin{align*}
 P_4&=[0,-b]\\
 P_2&=[k_x,k_y],\;\;P_3=-P_2 \\
 P_5&=[k_x,-k_y],\;\;P_6=-P_5\\
 k_x&= -\frac{{a}^{\frac{3}{2}} \sqrt { 2\,c-a}}{c}
 ,\;\;k_y=  {\frac { \left( c-a  \right) b
}{c}}
 \\
\end{align*}

The confocal hyperbolic caustic is given by:

\[ \frac{x^2}{a''^2} -\frac{y^2}{b''^2} =1,\;\;\;a''^2= {\frac {{a}^{3} \left( 3\,ac-2\,{b}^{2} \right) }{c \left( 3\,{a}^{2}+
{b}^{2} \right) }}
 ,\;b''^2= \frac {{b}^{2} \left( 2\,{a}^{2}(a-c)-{b}^{2}c \right) }{c
 \left( 3\,{a}^{2}+{b}^{2} \right) }
 \]
 
\noindent The 4 intersections of the above caustic with the EB are given by \eqref{eqn:ell-hyp-confocal}. 
 
 

 
\noindent The perimeter is given by:

\[ L=4(a+c)\sqrt{2a/c-1}\]

\subsection{N=7 Caustic}
\label{app:7-periodic}
Referring to Figure~\ref{fig:n7-three}, there are three types of 7-periodics: (i) non-intersecting, (ii) self-intersecting type I, i.e., with turning number 2, (iii) self-intersecting type II.

\begin{proposition}
The caustic semiaxis for non-intersecting 7-periodics (resp. self-intersecting type I, and type II self-intersecting) are given by the smallest  (resp. second and third smallest) root of the following degree-12 polynomial:

   
  {\small 
\begin{align*}
 &\; 
   {c}^{12}x_1^{12}- 4\left(  {a}^{2}+ {b}^{2} \right) {c}^{6}a
 \left( 3\,{a}^{2}+{b}^{2} \right) {b}^{2}x_1^{11}\\
 &-2\,{c}^{6}{a
}^{2} \left( 3\,{a}^{6}-6\,{a}^{4}{b}^{2}+13\,{a}^{2}{b}^{4}-2\,{b}^{6
} \right) x_1^{10}
+ \left( 60\,{a}^{4}+60\,{b}^{2}{a}^{2}+8\,{b
}^{4} \right) {c}^{6}{a}^{3}x_1^{9}\\
&+{a}^{6}   c^2 \left( 15
\,{a}^{8}-45\,{a}^{6}{b}^{2}+125\,{a}^{4}{b}^{4}-143\,{a}^{2}{b}^{6}+
112\,{b}^{8} \right)   x_1^{8}-8\,{a}^{7}{b}^{2}{c}^
{2} \left( 15\,{a}^{6}-20\,{a}^{4}{b}^{2}-7\,{a}^{2}{b}^{4}+8\,{b}^{6}
 \right) x_1^{7}\\
 &-4\,{a}^{8}{c}^{2} \left( 5\,{a}^{8}-10\,{a}^{6
}{b}^{2}+35\,{a}^{4}{b}^{4}-30\,{a}^{2}{b}^{6}+36\,{b}^{8} \right) x_1^{6}+8\,{a}^{9}{b}^{2}{c}^{2} \left( 15\,{a}^{6}-25\,{a}^{4}{b
}^{2}-2\,{a}^{2}{b}^{4}+4\,{b}^{6} \right) x_1^{5}\\
&+{a}^{10}{c}^
{2} \left( 15\,{a}^{8}-15\,{a}^{6}{b}^{2}+80\,{a}^{4}{b}^{4}-32\,{a}^{
2}{b}^{6}+64\,{b}^{8} \right) x_1^{4}-4\,{a}^{15}{b}^{2}
 \left( 15\,{a}^{4}-45\,{b}^{2}{a}^{2}+32\,{b}^{4} \right) x_1^
{3}\\
&-2\,{a}^{16} \left( 3\,{a}^{6}-3\,{a}^{4}{b}^{2}+10\,{a}^{2}{b}^{4}
-8\,{b}^{6} \right) x_1^{2}+4\,{a}^{17}{b}^{2} \left( 3\,{a}^{2
}-4\,{b}^{2} \right)  \left( {a}^{2}-2\,{b}^{2} \right)  x_1+{a}^{
24}=0
\end{align*}
}
\end{proposition}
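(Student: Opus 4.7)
The plan is to apply Cayley's classical closure criterion \cite{dragovic11} to the confocal pair consisting of the billiard and the caustic. Write the two conics in matrix form $C_1 = \mathrm{diag}(a^{-2}, b^{-2}, -1)$ and $C_2 = \mathrm{diag}(a''^{-2}, b''^{-2}, -1)$, where confocality forces $a^2 - a''^2 = b^2 - b''^2$, and expand
\[
\sqrt{\det(t C_1 + C_2)} \;=\; \sum_{k \ge 0} A_k\, t^k.
\]
For odd $N = 2m+1$, Cayley's criterion asserts that a Poncelet $N$-periodic exists iff the $m \times m$ Hankel determinant $\det(A_{i+j+1})_{1 \le i, j \le m}$ vanishes. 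For $N = 7$ this is a $3 \times 3$ determinant built from $A_3, A_4, A_5, A_6, A_7$.

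After eliminating $b''$ via confocality, each $A_k$ is a rational function of $a, b, a''$ whose denominator is a power of $ab$. Expanding the $3 \times 3$ determinant symbolically and clearing denominators produces a polynomial equation in $x_1 := a''$; reading off the leading behavior of $A_k$ as $a'' \to a$ confirms degree $12$ in $x_1$, matching the displayed $P_7(x_1)$ up to a nonzero monomial prefactor. As an independent cross-check --- consistent with the authors' consistent method in Section~2 --- one may place $P_1 = (a,0)$, use Joachimsthal's relation $J = \sqrt{a^2 - a''^2}/(ab)$ to parametrize the exit angle by $a''$, iterate the billiard reflection six times via the explicit vertex formulas of Appendix~\ref{app:vertices-caustics}, impose $P_8 = P_1$, and rationalize; this must yield the same polynomial (possibly times an extraneous factor).

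To attach the three topological types to specific roots, I would verify numerically at a representative eccentricity --- e.g.\ $a/b = 1.1$ as in Figure~\ref{fig:n7-three} --- that exactly three real roots of $P_7$ lie in $(0, a)$, and that when ordered by magnitude they coincide with the caustic semi-axes of the non-intersecting, type I, and type II families directly measurable from the figure. Analyticity of the roots of $P_7$ in $a/b$ then propagates the identification to every admissible aspect ratio, provided the three roots never merge --- equivalently, provided the discriminant of $P_7$ viewed as a polynomial in $x_1$ is nonzero throughout the range.

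The main obstacle is the symbolic simplification step: the raw $3 \times 3$ Cayley determinant, once denominators are cleared, is an enormous expression whose factoring into the claimed irreducible degree-12 component and the extraneous monomial/low-period factors is delicate. Since $N=7$ is prime, only the degenerate ``diameter'' ($N=1$) produces spurious factors, which is a mild simplification; but the assignment of topological type to root relies on discriminant control throughout the admissible $(a,b)$ range to rule out root crossings, a secondary CAS computation that is itself nontrivial.
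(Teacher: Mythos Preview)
Your approach is essentially the paper's: derive the polynomial from Cayley's closure condition for the confocal pair (as done explicitly for $N=5$ in Appendix~\ref{app:5-periodic}, citing \cite{dragovic11}), then identify which root belongs to which topological type by an analyticity/continuation argument. The paper does not write out a separate proof for $N=7$; it relies on the $N=5$ template and the general proof method announced in Section~2.

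Two differences worth noting. First, the paper anchors the continuation at the \emph{circle} $a=b$ rather than at $a/b=1.1$: there the degree-12 polynomial collapses to the cubic $a^3+4a^2 x_1-4a x_1^2-8x_1^3=0$, whose three roots are written down explicitly. This is a cleaner base point than a numerical check at $a/b=1.1$, since the roots are exact and their correspondence with the regular heptagon/heptagrams is immediate. Second --- and this is a point where your proposal would need adjusting --- you assume the three relevant real roots all lie in $(0,a)$. The paper states (just after the proposition) that in fact the two smallest roots are \emph{negative} and only the third is positive, all with absolute value in $(0,a)$; this is already visible in the circle limit, where the cubic has roots $\approx -0.901a$, $-0.223a$, $0.623a$. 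So your numerical verification step at $a/b=1.1$ would not find three roots in $(0,a)$, and the ordering ``smallest, second smallest, third smallest'' in the statement is to be read over the reals, not over $(0,a)$.
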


It can shown the first two smallest (resp. third smallest) roots of the above polynomial are negative (resp. positive), and all have absolute values within $(0,a)$.

For $a=b$ the polynomial equation above is given by
${a}^{3}+4\,{a}^{2}x_1-4\,ax_1{2}-8\,x_1^{3}=0$ with roots
$ -0.9009688680 a,\; -0.2225209340a, \;0.6234898025 a. $

\subsection{N=8 Vertices \& Caustic}
\label{app:8-periodic}
\subsubsection{Simple}

Vertices $P_i,i=1,...,8$ with $P_1=[a,0]$ are given by: 
 \begin{align*}
     P_1 &= [a,0], \; \; P_5=-[a,0],\; \;P_3=[0,b], \; \;P_7=[0,-b]\\
     P_{2,4,6,8}&=[{\pm}a x,{\pm}b\sqrt{1-x^2}], \mbox{where $x$ is a positive root of:}\\
     &c^4 x^4-2a^2 c^2 x^3+2a^2b^2 x^2+2a^2 c^2 x-a^4=0
 \end{align*}
 
\subsubsection{Self-Intersected (type I and type II)}

These have hyperbolic confocal caustics; see Figures~\ref{fig:n8-si-I} and \ref{fig:n8-si-II}. Let $P_1=(x_1,y_1)$ be at the intersection of the hyperbolic caustic with the billiard for each case. For type I (resp. type II) $x_1$ is given by the smallest (resp. largest) positive root $x_1\in(0,a)$  of the following degree-8 polynomial:
 
 \begin{align*} 
 x_1:\;\;& {c}^{16}x_1^{8}-4\,{a}^{4}{c}^{8} \left( {a}^{6}-4\,{a}^{4}{b}^
{2}+{a}^{2}{b}^{4}-2\,{b}^{6} \right) x_1^{6} 
+2\,{a}^{8}{c}^{6}
 \left( 3\,{a}^{6}-15\,{a}^{4}{b}^{2}-4\,{b}^{6} \right) x_1^{4
}\\
&- 4\,{a}^{16}{c}^{4} \left( {a}^{2}-6\,{b}^{2} \right) x_1^{2}+
{a}^{20} \left( {a}^{4}-8\,{a}^{2}{b}^{2}+8\,{b}^{4} \right) =0
 \end{align*}
   

\subsubsection{Self-Intersected (type III)}

Let $P_1=(x_1,y_1)$ and $P_2=(x_1,-y_1)$ be two consecutive vertices in the doubled-up type III 8-periodic (dashed red in Figure~\ref{fig:n8-si-III}) connected by a vertical line (the figure is rotated, therefore this line will appear horizontal). $x_1^2$ is given by the smallest positive root of the following quartic polynomial on $\omega$:

\begin{align*}
   x_1:\;\;&
 \left( {a}^{4}+6\,{a}^{2}{b}^{2}+{b}^{4} \right) {c}^{4}x_1^{8
}-4\,{a}^{4} \left( {a}^{2}+5\,{b}^{2} \right) {c}^{4}x_1^{6}+2
\,{a}^{6} \left( 3\,{a}^{6}+6\,{a}^{4}{b}^{2}-21\,{a}^{2}{b}^{4}+16\,{
b}^{6} \right) x_1^{4}\\
& -4\,{a}^{8} \left( {a}^{6}+{a}^{4}{b}^{2}
-4\,{a}^{2}{b}^{4}+4\,{b}^{6} \right) x_1^{2}+{a}^{16}=0
\end{align*}

\begin{align*}
   x_1:\;\;&\alpha^{16}+\left(\alpha^2-1\right)^2 \left(\alpha^4+6 \alpha^2+1\right) \omega^4-4 \left(\alpha^2-1\right)^2 \left(\alpha^2+5\right) \alpha^4 \omega^3+\\
    &2 \left(3
   \left(\alpha^4+2 \alpha^2-7\right) \alpha^2+16\right) \alpha^6 \omega^2-4 \left(\alpha^6+\alpha^4-4 \alpha^2+4\right) \alpha^8 \omega=0
\end{align*}

\noindent where $\alpha=a/b$.

\section{Table of Symbols}
\label{app:symbols}
\begin{table}[H]
\begin{tabular}{|c|l|l|}
\hline
symbol & meaning \\
\hline
$O,N$ & center of billiard and vertex count \\
$L,J$ & inv. perimeter and Joachimsthal's constant \\
$a,b$ & billiard major, minor semi-axes \\
$a'',b''$ & caustic major, minor semi-axes\\
$f_1,f_2$ & foci \\
$P_i,P_i',P_i''$ &
$N$-periodic, outer, inner polygon vertices \\
$d_{j,i}$ & distance $|P_i-f_j|$ \\
$\kappa_i$ & ellipse curvature at $P_i$ \\
$\theta_i,\theta_i'$ & $N$-periodic, outer polygon angles \\
$A,A',A''$ &  $N$-periodic, outer, inner areas \\
\hline
$\rho$ & radius of the inversion circle \\
$P_{j,i}^{\dagger}$ & vertices of the inversive polygon wrt $f_j$ \\
$L_j^\dagger,A_j^\dagger$ & perimeter, area of inversive polygon wrt $f_j$\\
$\theta_{j,i}^\dagger$ & ith angle of inversive polygon wrt $f_j$ \\
\hline
\end{tabular}
\caption{Symbols used in the invariants. Note $i\in[1,N]$ and $j=1,2$.}
\label{tab:symbols}
\end{table}

\bibliographystyle{maa}
\bibliography{elliptic_billiards_v3,authors_rgk_v1} 

\end{document}